\documentclass[12pt]{amsart}
\usepackage{psfrag,epsfig,amssymb}
\textheight=42\baselineskip

\numberwithin{figure}{section}
\numberwithin{table}{section}

\newtheorem{theorem}{Theorem}[section]

\newtheorem{lemma}[theorem]{Lemma}
\newtheorem{prop}[theorem]{Proposition}

\theoremstyle{definition}

\newtheorem{example}[theorem]{Example}
\newtheorem{cor}[theorem]{Corollary}
\newtheorem{question}[theorem]{Question}

\theoremstyle{remark}
\newtheorem{remark}[theorem]{Remark}
\newtheorem*{notation}{Notation}
\numberwithin{equation}{section}

\def \r-equiv{\stackrel{r}{ \simeq} }
\def \Z{{\mathbb Z}}

\def \H{{\mathbb H}}

\def \Q{{\mathbb Q}}
\def \rank{\mathop{\rm rk}}
\def \h{{\mathfrak h}}
\def \g{{\mathfrak g}}

\def \Isom{\rm{Isom}}
\def \rr{{\it r}}

\begin{document}

%\begin{center}
%{\bf\large\tt VERY FIRST DRAFT}
%\end{center}
%\bigskip

\author{Anna Felikson}
\address{Independent University of Moscow, B. Vlassievskii 11, 119002 Moscow, Russia}
\curraddr{School of Engineering and Science, Jacobs University Bremen,Campus Ring 1, 28759 Bremen, Germany}
\email{felikson@mpim-bonn.mpg.de}
\thanks{Research of both authors was supported in part by grant 
RFBR 07-01-00390-a and  SNF projects 200020-113199 and 200020-121506/1.}

\author{Pavel Tumarkin}
\address{School of Engineering and Science, Jacobs University Bremen,Campus Ring $1$, 28759 Bremen, Germany}
%\curraddr{Department of Mathematics, University of Fribourg, P\'erolles, Chemin du Mus\'ee 23, CH-1700 Fribourg, Switzerland}
\email{p.tumarkin@jacobs-university.de}
%\date{November 11, 2008}

%\author{Anna Felikson, Pavel Tumarkin}
%\address{MPIM-Bonn,...}
%\curraddr{Department of Mathematics, University of Fribourg, P\'erolles, Chemin du Mus\'ee 23, CH-1700 Fribourg, Switzerland}
%\email{felikson@mpim-bonn.mpg.de}
%\thanks{Research supported by grants INTAS YSF-06-10000014-5916, RFBR 07-01-00390-a  and S}

\title[Subalgebras of hyperbolic Kac-Moody algebras]{Hyperbolic subalgebras of hyperbolic Kac-Moody algebras}%\begi

\begin{abstract}
We investigate regular hyperbolic subalgebras of hyperbolic Kac-Moody algebras via their Weyl groups. 
We classify all subgroups relations between Weyl groups of hyperbolic Kac-Moody algebras, and show that for every pair of a group and subgroup their exists at least one corresponding pair of algebra and subalgebra. We also present a finite algorithm classifying all regular hyperbolic subalgebras of hyperbolic Kac-Moody algebras.

\end{abstract}

\maketitle

\setcounter{tocdepth}{1}

\tableofcontents
\addtocounter{section}{-1}
\section{Introduction}
\label{introduction}

In~\cite{Dyn} Dynkin introduced a notion of a {\it regular} subalgebra $\g_1$ of semisimple Lie algebra $\g$. By definition, this is any subalgebra $\h$ of $\g$ generated by root spaces $\g^{\alpha}$ and $\g^{-\alpha}$ for certain roots $\alpha$ of $\g$.  
Dynkin classified regular subalgebras of semisimple Lie algebras in terms of root systems. A {\it root subsystem} of root system $\Delta$ is a root system in $\Delta$ closed with respect to addition.  For his classification Dynkin used the following correspondence: a subalgebra $\g_1\subset \g$ is regular if and only if $\Delta_{\g_1}\subset \Delta_{\g}$ is a root subsystem, where $\Delta_{\g_1}$ and $\Delta_{\g}$ stay for the root systems of $\g_1$ and $\g$ respectively.

Following the definition given in~\cite{Dyn}, it is possible to consider regular subalgebras in 
infinite-dimensional generalizations of semisimple Lie algebras, namely, in Kac-Moody algebras. 
In particular,~\cite{aff} concerns the subalgebras in affine Kac-Moody algebras.
As a next step, it is natural to consider the same question for the case of smallest indefinite Kac-Moody algebras,
i.e. for hyperbolic Kac-Moody algebras.

Some examples of regular hyperbolic subalgebras of hyperbolic Kac-Moody algebras were already studied in several papers.
In~\cite{FN} a series of examples of subalgebras is considered,~\cite{E_10} studies the simply-laced case
and shows that each simply-laced hyperbolic algebra is a subalgebra of $E_{10}$,~\cite{T} and~\cite{T1} study
maximal rank hyperbolic subalgebras and subalgebras of corank 1 in arbitrary hyperbolic Kac-Moody algebras,~\cite{aff} describes all 
semisimple and affine subalgebras of hyperbolic Kac-Moody algebras.

In this paper we consider regular hyperbolic subalgebras of hyperbolic Kac-Moody algebras in full generality.
Following Dynkin~\cite{Dyn}, we reduce the question to the classification of hyperbolic root subsystems in hyperbolic root systems,
which in its turn could be easily reduced to the classification of simplicial subgroups of simplicial groups 
(roughly speaking, a reflection group $G\subset \Isom(\H^n)$ is called {\it simplicial} if there is a reflection preserving isomorphism 
$\phi: G\to G'$ where $G'\subset \Isom(\H^{n'})$, $n'\ge n$, is a reflection group whose  fundamental chamber is a simplex; the precise definition is given in Section~\ref{simpl}). 

Our main results are the classification of simplicial subgroups of simplicial groups, and an algorithm classifying all regular hyperbolic subalgebras of hyperbolic Kac-Moody algebras. We prove also Theorem~\ref{subalg for each subgr} which states that each simplicial subgroup $H$ in arithmetic over $\Q$ simplicial group $G$ corresponds to at least one regular subalgebra  $\h\subset \g$ (more precisely, there exists a  hyperbolic Kac-Moody algebra $\g$ and a regular hyperbolic subalgebra $\h\subset \g$ such that
the Weyl groups of the root systems of $\h$ and $\g$ are isomorphic to  
$H$ and $G$ respectively).

\medskip

The paper is organized as follows. Section~\ref{Preliminaries} serves to recall and introduce most of the notions involved.
In Section~\ref{reduction} we describe how the investigation of hyperbolic subalgebras may be reduced to the investigation 
of simplicial subgroups in simplicial groups. 

In section~\ref{Maximal subgroups} we study basic properties of maximal
(by inclusion) simplicial subgroups which will be extensively used to obtain the classification. We also
prove that in each embedding of hyperbolic algebras $\h\subset \g$ such that the Weyl group of the root system  of $\h$
is a maximal simplicial subgroup of the Weyl group of the root system of $\g$, the algebra $\h$ is always a regular subalgebra of 
$\g$. In particular, this proves the main theorem  (Theorem~\ref{subalg for each subgr})
for the case of maximal subgroup $H\subset G$ with $\rank H<\rank G$. 
%(and in the case of a non-simply-laced group $G$ 
%this produce at least two different subalgebras $\h\subset \g$ and $\h' \subset \g'$ satisfying the conditions of 
%Theorem~\ref{subalg for each subgr}.

Sections~\ref{visual}--\ref{classification} are devoted to classification of simplicial subgroups in hyperbolic simplicial
groups. The subgroups $H\subset G$ of maximal rank ($\rank H=\rank G$) are classified in~\cite{JKRT}, so, we concentrate on
the subgroups of smaller ranks.   
%First, in Section~\ref{properties} we list basic properties of simplicial subgroups of non-maximal rank. 
Section~\ref{visual} describes 
an easy way to find almost all  (non-maximal rank) simplicial subgroups in simplicial groups. 
To list all the other subgroups we proceed case by case.
A simplicial rank 2 group (i.e., an infinite dihedral group) is a subgroup of any other simplicial group. 
Furthermore, there are finitely many hyperbolic simplicial groups of rank greater than 3, which implies that we need to check only finitely many pairs $(H,G)$, $3<\rank H<\rank G$.
In Sections~\ref{methods} and~\ref{algorithm} we present different ways to determine 
if a given simplicial group $H$ is a subgroup of the given simplicial group $G$.  
Most general algorithm is described in Section~\ref{algorithm}, however it does not cover several cases, which we treat with the methods explained in Section~\ref{methods}.
The later ones are also in most cases quicker than use of the general algorithm.
The results about subgroups are summarized in Section~\ref{classification} (Theorem~\ref{classification of subgroups}).

Finally, in Section~\ref{back to subalgebras} we apply the classification of subgroups to the investigation of subalgebras.
In particular, we prove Theorem~\ref{subalg for each subgr}), which states that for each subgroup
one can find a regular subalgebra in some algebra.

%\medskip
%\noindent
\subsection*{Acknowledgments}
A large part of the work was performed during the authors' stay at the University of Fribourg, Switzerland. 
The authors are grateful to
the department for the working atmosphere and to R.~Kellerhals for invitation and stimulating discussions.
The first author would also like to thank the Max-Planck Institute for Mathematics in Bonn, where the final version of the paper was written.

\section{Preliminaries}
\label{Preliminaries}
In this section we discuss hyperbolic Kac-Moody algebras and simplicial reflection groups.
We refer to~\cite{Kac} for background on Kac-Moody algebras and to~\cite{29} for details concerning discrete reflection groups. 
In particular, we use the standard notation for finite and affine reflection groups and root systems.

\subsection{Hyperbolic Kac-Moody algebras}
Following Kac~\cite{Kac}, one can construct a Kac-Moody algebra
$\g(A)$ for each generalized Cartan matrix $A$.

A generalized Cartan matrix $A$ is of {\it hyperbolic type} if it is indecomposable symmetrizable of indefinite type and  any proper principle submatrix of $A$ is of finite or affine type. 
In this case the corresponding to $A$ symmetric matrix $B$ is of signature $(n,1)$.

A Kac-Moody algebra $\g(A)$ is called {\it hyperbolic} if the matrix $A$ is of  hyperbolic type. By a {\it hyperbolic root system} we mean a root system of hyperbolic Kac-Moody algebra.

According to Vinberg~\cite{Vinb}, the Weyl group $W$ of the root system $\Delta$ of $\g$ is a group generated by reflections
acting in a hyperbolic space $\H^n$. In case of hyperbolic Kac-Moody algebra $\g$, the fundamental chamber of $W$ is  an $n$-dimensional hyperbolic Coxeter simplex of finite volume, whose dihedral angles are in the set
$\{\frac{\pi}{2},\frac{\pi}{3},\frac{\pi}{4},\frac{\pi}{6}\}$
(zero angle also appears for $n=2$).

\subsection{Simplicial reflection groups}
\label{simpl}
An element $g$ of a group $G\subset  \Isom(\H^n)$  is called {\it parabolic} is $g$ preserves a unique point of $\partial \H^n$.

We say that a reflection group $G\subset \Isom(\H^n)$  is  {\it strictly simplicial} 
if its fundamental chamber  is a finite volume simplex.

A reflection group $G'\subset \Isom(\H^{n'})$  will be called {\it r-isomorphic} to another reflection group $G\subset \Isom(\H^{n})$ 
if there exists an isomorphism $\phi:G'\to G$ preserving the set of reflections and the set of parabolic elements. 
We write $G'\r-equiv G$ for {\rr}-isomorphic groups.

A group $G'\subset \Isom(\H^{n'})$ is {\it simplicial} if it is {\rr}-isomorphic to a strictly simplicial group 
$G\subset \Isom(\H^n)$.

A {\it rank} of a reflection group is the number of reflections in any standard generating set (i.e. the number of facets of the fundamental chamber,
where a {\it facet} is a codimension 1 face of a polytope).
So, for a strictly simplicial group $G\subset \Isom(\H^n)$ we have $\rank G=n+1$,
and for a general simplicial group $G'\r-equiv G$ we obtain $\rank G'=\rank G=n+1$. 

Simplicial groups  (up to an {\rr}-isomorphism) could be conveniently described by Coxeter diagrams.
A {\it Coxeter diagram}  $\Sigma(G)$ of a strictly simplicial group $G\subset \Isom(\H^n)$ is a Coxeter diagram of its fundamental simplex,
namely $\Sigma(G)$ is a graph with $n+1$ vertices,
one vertex for each of facets $f_0,f_1,\dots,f_n$ of a fundamental simplex of $G$. Furthermore,
$i^{th}$ vertex of $\Sigma(G)$ is joined with $j^{th}$ one by an edge labeled by $m_{ij}$ 
if the dihedral angle formed by $f_i$ and $f_j$
is of size $\pi/m_{ij}$. For abuse of notations one uses empty (absent) edges for $m_{ij}=2$, and simple, double, triple and 4-tuple
edges for $m_{ij}=3,4,5,6$ respectively. For the zero angle formed by $f_i$ and $f_j$ in 2-dimensional case
one draws a bold edge. 

The list of Coxeter diagrams of simplicial groups (or, simply, of hyperbolic simplices) is contained in~\cite[Table~4]{29}. 
There are infinitely many strictly simplicial groups in $\H^2$ (but  finitely many with fundamental chambers having angles
in the set $\{0,\pi/2,\pi/3,\pi/4,\pi/5,\pi/6\}$ only) and finitely many in other dimensions. In particular, no strictly simplicial group
acts in $\H^n$, $n>9$. The dihedral angles of fundamental simplices of simplicial groups in $\H^n$, $n\ge 3$ are contained in the set  $\{\pi/2,\pi/3,\pi/4,\pi/5,\pi/6\}$.

We say that a group $G$ is {\it simply-laced} if its Coxeter diagram $\Sigma(G)$ is a simply-laced graph,
i.e. if each edge of $\Sigma(G)$ is either simple or bold.

\begin{remark}
 For each hyperbolic root system $\Delta $ we construct a simplicial group $G(\Delta)$ as a Weyl group of $\Delta$.
Sometimes different root systems may lead to one and the same simplicial group. Namely, $G(\Delta)$ contains all information
about the angles between simple real roots of $\Delta$ but does not know which of the roots are long and which are short.
The root systems are depicted by Dynkin diagrams  which are graphs $\Sigma(G)$ with multiple edges 
enabled by arrows indicating (where arrow from vertex $i$ to $j$ means that $j^{th}$ root is larger than $i^{th}$ one).
Here one exclusion is: Dynkin diagram of dihedral group $\mathbb{G}_2$ of order $12$ is an oriented triple edge, not $4$-tuple as in its Coxeter diagram. 

A simplicial group $G$ corresponds to exactly one hyperbolic root system if  $G$ is simply-laced.
In all the other cases, to pass from a simplicial group to a hyperbolic root system one should specify the appropriate lengths of 
the simple roots (so, two vertices joined by a simple edge correspond to the roots of the same length; if two vertices of 
$\Sigma(G)$ are joined by a double edge then one of the corresponding roots 
is $\sqrt{2}$ times larger than another).
%; if two vertices of $\Sigma(G)$ are joined by a 4-tuple edge 
%then one of the corresponding roots is $\sqrt{3}$ times longer than another).
For most of the non-simply-laced groups we may obtain two different root systems by interchanging short roots with long ones
(however, sometimes this gives the same root system up to renumbering of the simple roots, see Fig.~\ref{ex-sh-l}).
\end{remark}

\begin{figure}[!h]
\begin{center}
\epsfig{file=./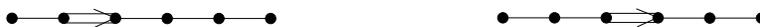,width=0.8\linewidth}
\caption{Changing lengths of the roots may change the root system (left) or may lead to the isomorphic one (right)}
\label{ex-sh-l}
\end{center}
\end{figure}

Notice that a simplicial group $G$ is a Weyl group of a hyperbolic root system if and only if $G$ is arithmetic over $\Q$.
In particular, no pair of roots in hyperbolic root system compose an angle $\pi/5$.

\subsection{Reflection subgroups of simplicial groups}
By a {\it reflection subgroup} of a reflection group we mean a subgroup generated by reflections. By a {\it simplicial subgroup} we mean a reflection subgroup 
which is simplicial.

Let $S=\{r_0,\dots,r_k\}$ be the set of standard (Coxeter) generators of a reflection group $G$,
 i.e. the reflections with respect to the facets of the fundamental chamber of $G\subset \Isom(\H^n)$. 
A reflection subgroup $G_1\subset G$ is called  {\it standard parabolic} if $G_1$ is generated by some collection of the reflections $r_i\in S$. A subgroup $G_1\subset G$ is called  {\it parabolic} if it is conjugated in $G$ to some standard parabolic subgroup.

\begin{remark} 
\label{stabilizers}
If $G$ is cocompact simplicial group then any proper parabolic subgroup is finite  
(since it fixes some subspace of $\H^n$).
Moreover, any maximal proper parabolic subgroup is a finite group stabilizing some point in $\H^n$. 

If a simplicial group $G$ is not cocompact, any finite parabolic subgroup still fixes some subspace of $\H^n$,
while an infinite proper parabolic subgroup stabilizes some point at $\partial \H^n$.

\end{remark}
 
By a {\it mirror} of a reflection  group we mean a hyperplane fixed by some reflection contained in this group.

Let $H\subset G$ be a simplicial subgroup of a strictly simplicial group  $G\subset \Isom(\H^n)$. 
Denote by $\Pi$ a minimal non-trivial $G$-invariant subspace of $\H^n$, and let $H'$ be the restriction of $H$ to $\Pi$.
Clearly, $H\r-equiv H'$ and $H'$ is strictly simplicial reflection subgroup of $\Isom(\Pi)$.  

The following proposition is just to fix several almost evident properties of
simplicial subgroups.

\begin{prop}
\label{properties}
Let $H\subset G$, $H'$ and $\Pi$ be as defined above.

1)  If $G\subset \Isom(\H^n)$ is cocompact then $H'\subset \Isom(\Pi)$ is cocompact.

2) For any parabolic subgroup $H_1'\subset H'$ there is a parabolic subgroup  $G_1\subset G$ 
containing a subgroup {\rr}-isomorphic to $H'_1$.

%3) If $H'$ has an affine parabolic subgroup $H'_1$ then some parabolic subgroup $G_1\subset G$ contains a subgroup {\rr}-isomorphic to $H'_1$.

3) Let $K$ be a simplicial group. If $K$ does not admit 
reflection preserving embedding into $G$, then $K$  has no reflection preserving embeddings into $H$.

\end{prop}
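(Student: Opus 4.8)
The plan is to treat the three parts in increasing order of difficulty, in each case following parabolic elements and point stabilizers along the inclusion $H\subset G$ and the $r$-isomorphism $H\r-equiv H'$. Part 3) is essentially formal: any reflection-preserving embedding $K\hookrightarrow H$, composed with the inclusion $H\hookrightarrow G$, is again reflection preserving, because $H$ is a reflection subgroup of $G$ and hence every reflection of $H$ --- being a conjugate of a standard generator of $H$ by an element of $H\subset G$ --- is a reflection of $G$. Contraposition then gives the stated implication.

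For part 1) the key point is that a group acting cocompactly on a hyperbolic space has no parabolic isometries. Thus if $G$ is cocompact, then $H\subset G$ contains no parabolic element, and since the $r$-isomorphism $H\r-equiv H'$ preserves parabolic elements, neither does $H'\subset\Isom(\Pi)$. But $H'$ is strictly simplicial, so its fundamental chamber is a finite-volume simplex; were this simplex non-compact it would have an ideal vertex, and the reflections in the facets through that vertex would generate an affine reflection group, producing a parabolic element of $H'$ --- a contradiction. Hence the fundamental simplex of $H'$ is compact, i.e. $H'$ is cocompact.

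Part 2) is where the real work lies. Given a parabolic subgroup $H_1'\subset H'$, I would first transport it through the $r$-isomorphism to a subgroup $H_1\subset H\subset G$ with $H_1\r-equiv H_1'$; since the $r$-isomorphism preserves reflections and parabolic elements, $H_1$ is again parabolic in $H$ and is finite exactly when $H_1'$ is. If $H_1=H$ one simply takes $G_1=G$, itself a standard parabolic subgroup of $G$. Otherwise $H_1$ is conjugate in $H$ to a proper standard parabolic, which fixes pointwise a proper face of the fundamental simplex of $H$ (equivalently of $H'$). If that face is ordinary, $H_1$ is finite and fixes a point $x\in\H^n$ in the relative interior of that face; moving $x$ into the fundamental simplex of $G$ by an element of $G$ shows that the stabilizer $G_x$ is conjugate to a standard parabolic subgroup of $G$, so $G_1:=G_x$ works. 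If that face is an ideal vertex, $H_1$ is infinite and fixes a point $\xi\in\partial\H^n$; then $G_\xi\supset H_1$ is an infinite stabilizer of an ideal point in a simplicial group, so $\xi$ is a cusp of $G$ and $G_\xi$ is conjugate to a standard parabolic subgroup of $G$ (cf.\ Remark~\ref{stabilizers}), so $G_1:=G_\xi$ works. In each case $G_1$ is parabolic in $G$ and contains a subgroup $r$-isomorphic to $H_1'$.

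The step I expect to be the main obstacle is precisely this last case of part 2): one must be sure that the ideal point $\xi$ fixed by the infinite subgroup $H_1\subset G$ is a genuine cusp, so that $G_\xi$ really is conjugate to a standard parabolic subgroup and not merely some infinite elementary stabilizer at infinity. This is exactly where the description of stabilizers in simplicial groups recorded in Remark~\ref{stabilizers}, together with the fact (used throughout) that reflection subgroups of simplicial groups inherit both their reflections and their parabolic elements, has to be invoked with care; the finite case of part 2) and parts 1) and 3) are comparatively routine.
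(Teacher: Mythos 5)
Your argument is correct and follows essentially the same route as the paper's (very terse) proof: part 1) via the absence of parabolic elements, which is equivalent to the paper's "no parallel mirrors" observation; part 2) by taking the stabilizer in $G$ of the set fixed by $H_1'$; and part 3) by composing embeddings. You simply supply the case analysis (ordinary face versus ideal vertex) that the paper leaves implicit.
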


\begin{proof}
1) If $G$ is cocompact then there are no parallel mirrors of reflections of $G$ in $\H^n$. Thus, there are no ones in $H$, and consequently, in $H'$.

2) $G_1$ is the subgroup of $G$ fixing a subspace $X$, where $X\subset \H^n$ is the set fixed by $H'_1$.

3) The statement is evident.

\end{proof}
%\item[5.] Suppose that $H'$ and $G$ are arithmetic over $\Q$.
%Let $H'$ and $G$ be  Weyl groups of a root systems $\Delta_{H'}$ and $\Delta_G$ respectively
%and  let $D_{H'}$ and $D_G$ be  Dynkin diagrams of these root systems.
%Then for each proper non-affine subdiagram $S$ of $D_H'$ 
%there exists a subdiagram of $D_G$ of the same type as $S$ is. 

%\item[6.] Suppose that $G\subset G_1$ and $H\subset H_1$. If $H\subsetneq G_1$ then $H_1\subsetneq G$. ????????????????

%\end{itemize} 

\section{From subalgebras to subgroups}
\label{reduction}

As before, let $\Delta_{\g}$ be a root system of a hyperbolic Kac-Moody algebra $\g$.
By definition, every regular subalgebra $\h$ of $\g$ has a root system $\Delta_{\h}\subset \Delta_{\g}$ which is closed with respect to addition, 
i.e. satisfies the following condition:
\begin{center}
           if $\alpha, \beta \in \Delta_{\h}$ and $\alpha+\beta \in \Delta_{\g}$, then  $\alpha+\beta \in \Delta_{\h}$.
\end{center}
A root system $\Delta_{\h} \subset \Delta_{\g}$ satisfying the condition above is called a {\it root subsystem} of
$\Delta$. It is easy to see that any
root subsystem  $\Delta_{\h} \subset \Delta_{\g}$ is a root system of a regular subalgebra of $\g$. Therefore,
classifying regular subalgebras of $\g$ is equivalent to classifying root subsystems of  $\Delta_{\g}$.

Furthermore, since $\g$ is a hyperbolic Kac-Moody algebra, a Weyl chamber $C_{\g}$ of $\Delta_{\g}$ is a finite volume 
hyperbolic simplex. The Weyl group  $W_{\g}$ of $\Delta_{\g}$ is generated by reflections with respect to the facets of $C_{\g}$,
so,   $W_{\g}$ is a simplicial reflection group.

Let $\h\subset \g$ be a regular hyperbolic subalgebra and $C_{\h}$ be a Weyl chamber of $\Delta_{\h}$. 
Let $\Pi$ be the minimal $H$-invariant subspace of $\H^n$.
Denote by $C'_{\h}$ the section of $C_{\h}$ by the plane $\Pi$.
Then  $C'_{\h}$ is a finite volume 
hyperbolic simplex in $\Pi$, and  the Weyl group  $W_{\h}$ of $\Delta_{\h}$ is also a simplicial group. Therefore, regular subalgebra $\h\subset \g$ corresponds to a simplicial subgroup $W_{\h}\subset W_{\g}$.

So, the problem of classification of regular hyperbolic subalgebras of $\g$ splits into two steps:

\begin{itemize}
\item {\bf  Step 1:} find the
classification of simplicial reflection subgroups of hyperbolic simplicial reflection groups; 
\item { \bf Step 2:} investigate which subgroups correspond to root subsystems.

\end{itemize}

For the maximal rank subalgebras (i.e. $\rank \h= \rank \g$) these steps are done in~\cite{JKRT} and~\cite{T} respectively.
So, we focus on the case of distinct ranks. Step~1 will be discussed in Sections~\ref{Maximal subgroups}--\ref{classification}.
Step~2 is based on the following three lemmas.

\begin{lemma}[\cite{T}, Lemma~3]
\label{tower}
Let $\Delta_{\g_1}\subset \Delta_{\g_2}$ and $\Delta_{\g_2}\subset \Delta_{\g_3}$ be root subsystems. Then $\Delta_{\g_1}$ is a root subsystem of $\Delta_{\g_3}$.

\end{lemma}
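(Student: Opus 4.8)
The plan is to simply chain together the two closure conditions. Being a root subsystem of an ambient root system amounts to two requirements: being a root system in its own right, and being closed under addition inside the ambient system (in the sense that $\alpha,\beta\in\Delta_{\h}$ and $\alpha+\beta\in\Delta_{\g}$ force $\alpha+\beta\in\Delta_{\h}$). The first requirement is already granted for $\Delta_{\g_1}$ by hypothesis, and $\Delta_{\g_1}\subset\Delta_{\g_3}$ is immediate from transitivity of inclusion. So the entire content of the lemma reduces to verifying the closure condition of $\Delta_{\g_1}$ relative to $\Delta_{\g_3}$.

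To do this, I would take $\alpha,\beta\in\Delta_{\g_1}$ with $\alpha+\beta\in\Delta_{\g_3}$ and argue in two steps. First, since $\Delta_{\g_1}\subset\Delta_{\g_2}$, the roots $\alpha,\beta$ lie in $\Delta_{\g_2}$; then, because $\Delta_{\g_2}$ is a root subsystem of $\Delta_{\g_3}$ and $\alpha+\beta\in\Delta_{\g_3}$, the closure property of $\Delta_{\g_2}$ in $\Delta_{\g_3}$ yields $\alpha+\beta\in\Delta_{\g_2}$. Second, now that $\alpha,\beta\in\Delta_{\g_1}$ and $\alpha+\beta\in\Delta_{\g_2}$, the closure property of $\Delta_{\g_1}$ in $\Delta_{\g_2}$ gives $\alpha+\beta\in\Delta_{\g_1}$. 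This is exactly what was required.

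I do not expect any genuine obstacle here; the proof is a two-line diagram chase. The only point that deserves a moment's attention is the order of the two steps: one must first use the \emph{outer} closure (of $\Delta_{\g_2}$ in $\Delta_{\g_3}$) to bring $\alpha+\beta$ into $\Delta_{\g_2}$ before the \emph{inner} closure (of $\Delta_{\g_1}$ in $\Delta_{\g_2}$) can be invoked, since the latter hypothesis requires the sum to already lie in $\Delta_{\g_2}$.
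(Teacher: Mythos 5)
Your argument is correct: the two-step chase (first using closure of $\Delta_{\g_2}$ in $\Delta_{\g_3}$ to get $\alpha+\beta\in\Delta_{\g_2}$, then closure of $\Delta_{\g_1}$ in $\Delta_{\g_2}$) is exactly the standard proof, and your remark about the order of the two steps is the only point of substance. The paper itself does not reproduce a proof here --- it cites the lemma from~\cite{T} --- so there is nothing further to compare against.
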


To simplify checking if $\Delta_{\h}$ is a root subsystem of a root system $\Delta_{\g}$, we use the following criterion.

\begin{lemma}
\label{simple roots}
Let $\Pi_1$ be a set of simple roots of a root system $\Delta_1$.
Then $\Delta_1\subset \Delta$ is a root subsystem if and only if 
\begin{equation}
\alpha - \beta \notin \Delta \text{ for all } \alpha,\beta\in \Pi_1.   
\end{equation}
                   
\end{lemma}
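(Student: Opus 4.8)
The plan is to prove both directions of the equivalence, with the forward direction being essentially trivial and the reverse direction being the substantive one. For the forward direction: if $\Delta_1 \subset \Delta$ is a root subsystem, then since $\Pi_1 \subset \Delta_1$ and for $\alpha, \beta \in \Pi_1$ the difference $\alpha - \beta$ would have to lie in $\Delta_1$ if it lay in $\Delta$; but a standard property of simple roots is that no difference of two simple roots is a root of $\Delta_1$ (a difference of simple roots has mixed-sign coordinates in the simple root basis, hence is neither positive nor negative). This gives the contradiction, so the condition holds.

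For the reverse direction, I assume $\alpha - \beta \notin \Delta$ for all $\alpha, \beta \in \Pi_1$ and must show $\Delta_1$ is closed under addition inside $\Delta$: if $\gamma, \delta \in \Delta_1$ and $\gamma + \delta \in \Delta$, then $\gamma + \delta \in \Delta_1$. First I would reduce to the case of positive roots: write each of $\gamma, \delta$ as $\pm$ a sum of simple roots from $\Pi_1$ with nonnegative integer coefficients, and note that the reflections of $W_1 = W(\Delta_1)$ (which is a subgroup of $W(\Delta)$) act on $\Delta_1$ and permit moving the pair into a more convenient position. The key idea is that the hypothesis on simple-root differences controls the geometry of the mirror arrangement: the mirror of the reflection $s_\alpha$ ($\alpha \in \Pi_1$) either coincides with, is parallel to (for the affine/parabolic case), or meets the mirror of $s_\beta$ at an angle $\pi/m$, and $\alpha - \beta \notin \Delta$ forces these angles to be $\pi/2$ or the "infinite" (parallel) case—equivalently, the $\Pi_1$-system has the correct Cartan-matrix entries so that $\{\Delta_1\}$ is genuinely a sub-root-system. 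I would make this precise by showing that the set $\Delta_1' = W_1 \cdot \Pi_1$ generated by the reflections from $\Pi_1$ inside $\Delta$ is closed under addition whenever the stated difference condition holds, using induction on the height $\mathrm{ht}(\gamma + \delta)$ (after conjugating so that $\gamma + \delta$ is positive): one finds a simple root $\alpha \in \Pi_1$ with $(\gamma+\delta, \alpha) > 0$, applies $s_\alpha$, and checks that the condition $\alpha - \beta \notin \Delta$ prevents the reflection from creating a root outside $\Delta_1$.

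The main obstacle I expect is the bookkeeping in the inductive step of the reverse direction: when I reflect $\gamma$ or $\delta$ by some $s_\alpha$ with $\alpha \in \Pi_1$, I need to guarantee that $s_\alpha(\gamma)$ and $s_\alpha(\delta)$ still lie in $\Delta_1$ (this is automatic since $s_\alpha \in W_1$) and, more delicately, that the \emph{sum} $\gamma + \delta$, when it lands in $\Delta$, is not pushed out of the subsystem — this is where the hypothesis $\alpha - \beta \notin \Delta$ is used, via the observation that $s_\alpha(\beta) = \beta - \langle \beta, \alpha^\vee\rangle \alpha$ and if this were to interact badly with $\Delta$ one would produce a forbidden difference. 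An alternative, perhaps cleaner route would be to cite Dynkin's original argument (or the analogue in~\cite{T}) that reduces closure of $\Delta_1$ under addition to a rank-$2$ verification: any potential failure $\gamma + \delta \in \Delta \setminus \Delta_1$ can be localized to the rank-$2$ subsystem spanned by two simple roots $\alpha, \beta \in \Pi_1$, where closure fails precisely when $\alpha - \beta \in \Delta$ (the only rank-$2$ configurations to rule out are $A_2$, $B_2$, $G_2$ appearing as "non-closed" inside a larger rank-$2$ system, and in each such case a difference of simple roots of the small system is a root of the big one). I would present the rank-$2$ reduction as the spine of the argument and check the finitely many rank-$2$ cases directly.
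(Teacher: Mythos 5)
Your forward direction is correct and is the standard argument (a difference of two distinct simple roots of $\Delta_1$ has coefficients of both signs in the basis $\Pi_1$, so it cannot lie in $\Delta_1$, while closure would force it to lie there if it were a root of $\Delta$). The issue is the reverse direction, which is the substantive half and which the paper does not prove combinatorially at all: it deduces it from Proposition~\ref{root sub}, i.e.\ Theorem~3.1 of~\cite{FN}, whose proof is Lie-algebra-theoretic. The point there is that $\alpha-\beta\notin\Delta\cup\{0\}$ forces $[\g^{\alpha},\g^{-\beta}]=0$, so generators $e_\alpha\in\g^{\alpha}$, $f_\alpha\in\g^{-\alpha}$ attached to $\Pi_1$ satisfy the defining relations of a Kac--Moody algebra; the subalgebra they generate is graded by $\Delta_1$ with root spaces contained in those of $\g$, and closure follows. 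Your sketch tries to replace this by a purely root-combinatorial induction, and that is where the gaps are.

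Concretely: (i) the height induction breaks down for imaginary roots. The closure condition must also be checked when $\gamma+\delta$ is an imaginary root of $\Delta$ (and $\gamma,\delta$ may themselves be imaginary roots of $\Delta_1$); a positive imaginary root in the fundamental domain of the Weyl group pairs nonpositively with every simple root, so there need not exist $\alpha\in\Pi_1$ with $(\gamma+\delta,\alpha)>0$, and no reflection from $W(\Delta_1)$ decreases its height. (ii) The rank-$2$ localization is asserted, not proved: for general $\gamma,\delta\in\Delta_1$ you give no mechanism moving a failure of closure into the span of two \emph{simple} roots, and the finite-type tools that make Dynkin's reduction work (Weyl-group transitivity on roots of a given length, finite root strings) are unavailable in the hyperbolic setting. (iii) Even granting the localization, the list $A_2$, $B_2$, $G_2$ is a finite-type enumeration; the rank-$2$ subsystems arising here are typically affine or hyperbolic (infinite dihedral), so the promised ``finitely many rank-$2$ cases'' check does not cover what actually occurs. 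The clean repair is exactly what the paper does: obtain the ``if'' direction by applying Proposition~\ref{root sub} to $\Pi_1$ rather than reproving it; your proposed alternative of invoking Dynkin's finite-dimensional argument does not transfer, for the same reasons.
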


The criterion follows immediately from Proposition~\ref{root sub} below.

\begin{prop}[\cite{FN}, Theorem 3.1]
\label{root sub}
Let $\Delta$ be a root system of a Kac-Moody
algebra. Let $\beta_1 ,\dots,\beta_k\in \Delta^{\rm re}$  be positive real roots such that $\beta_i - \beta_j \notin \Delta$ for all
$1 \le i < j \le k$. Let $\Delta' \subset \Delta$ be a minimal root system containing $\beta_1,\dots,\beta_k$. Then
$\Delta'$ is a root subsystem of $\Delta$ (and $\Delta'$ is a root system of a regular subalgebra of $\g$).

\end{prop}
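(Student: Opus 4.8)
The plan is to show that the minimal root system $\Delta'\subset\Delta$ containing the real roots $\beta_1,\dots,\beta_k$ is in fact closed under addition inside $\Delta$. Since $\Delta'$ is generated (as a root system) by the $\beta_i$ together with the reflections $r_{\beta_i}$, the natural strategy is to show that the set
\[
\Delta' := W'\cdot\{\beta_1,\dots,\beta_k\}, \qquad W' := \langle r_{\beta_1},\dots,r_{\beta_k}\rangle,
\]
is already closed under addition in $\Delta$; then $\Delta'$ is a root subsystem, and it is clearly the smallest one containing the $\beta_i$. The first step is to record the standard fact, valid in any Kac-Moody root system, that for real roots $\alpha,\beta$ the $\alpha$-string through $\beta$ is an unbroken string $\beta-p\alpha,\dots,\beta+q\alpha$ with $p-q=\langle\beta,\alpha^\vee\rangle$, and in particular that $\alpha+\beta\in\Delta$ forces either $\alpha-\beta\in\Delta$ or $\langle\alpha,\beta^\vee\rangle<0$ (equivalently $\langle\beta,\alpha^\vee\rangle<0$, using symmetrizability). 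This is the mechanism that will let the hypothesis $\beta_i-\beta_j\notin\Delta$ be propagated.

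Next I would argue that the hypothesis $\beta_i-\beta_j\notin\Delta$ already guarantees that $\{\beta_1,\dots,\beta_k\}$ behaves like a set of simple roots for $W'$: each $\langle\beta_i,\beta_j^\vee\rangle\le 0$ for $i\ne j$ (otherwise $r_{\beta_i}(\beta_j)=\beta_j-\langle\beta_j,\beta_i^\vee\rangle\beta_i$ would produce, together with the string property, the forbidden difference $\beta_i-\beta_j\in\Delta$), so $W'$ is a Coxeter group with Coxeter generators $r_{\beta_i}$ and the $\beta_i$ span a root basis. Consequently every element of $W'\cdot\{\beta_i\}$ is, up to sign, a nonnegative integer combination of the $\beta_i$; one then has a notion of "positive'' and "negative'' roots in $\Delta'$ and of height with respect to the $\beta_i$. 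The closure statement is then proved by induction on height: given $\gamma,\delta\in\Delta'$ with $\gamma+\delta\in\Delta$, after applying a suitable element of $W'$ we may assume one of them, say $\gamma$, is a simple root $\beta_i$, or reduce to the case where $\gamma+\delta$ has smaller total height; analysing the $\beta_i$-string through $\delta$ and using the hypothesis to rule out the "bad'' configurations shows $\gamma+\delta\in\Delta'$.

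I expect the main obstacle to be the induction bookkeeping in this last step — making precise the reduction "move one summand to a simple root by a Weyl group element'' while controlling that the other summand and the sum stay inside $\Delta'$, and handling the case $\gamma=-\delta'$ up to the $W'$-action where naive height does not decrease. The key technical input that makes it go through is the unbroken-string property for real roots in Kac-Moody algebras (Kac, Prop.~5.1 in~\cite{Kac}), together with symmetrizability of $A$, which ensures $\langle\alpha,\beta^\vee\rangle$ and $\langle\beta,\alpha^\vee\rangle$ have the same sign. Once closure under addition is established, the parenthetical claim that $\Delta'$ is the root system of a regular subalgebra of $\g$ is immediate from the definition of regular subalgebra recalled in Section~\ref{reduction}: take $\h$ to be the subalgebra generated by $\g^{\pm\alpha}$, $\alpha\in\Delta'$.
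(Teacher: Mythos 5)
The paper does not actually prove this proposition: it is imported verbatim from \cite{FN} (Theorem~3.1), so there is no internal proof to compare yours with. Judged on its own merits, your argument has a genuine gap at its very first step, namely the identification $\Delta'=W'\cdot\{\beta_1,\dots,\beta_k\}$. The minimal root system containing the $\beta_i$ is the root system of the regular subalgebra generated by the root spaces $\g^{\pm\beta_i}$, and this contains \emph{imaginary} roots as soon as the matrix $C_{ij}=\langle\beta_j,\beta_i^\vee\rangle$ is not of finite type, whereas the $W'$-orbit of the $\beta_i$ consists of real roots only. Worse, that orbit is typically not closed under addition inside $\Delta$: take $\beta_1,\beta_2$ to be the simple roots of a rank~$2$ hyperbolic algebra with $\langle\beta_1,\beta_2^\vee\rangle=\langle\beta_2,\beta_1^\vee\rangle=-3$; then $\beta_1-\beta_2\notin\Delta$, but $\beta_1+\beta_2$ is an imaginary root of $\Delta$ of negative norm and so lies in no Weyl orbit of real roots. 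Thus the set you propose to prove closed under addition simply is not closed, the proposition is nevertheless true (here $\Delta'=\Delta$), and the induction on height cannot be repaired for that set: any correct proof must account for the imaginary part of $\Delta'$.

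What is correct in your sketch, and is indeed the crux of the matter, is the use of the unbroken string property to deduce $\langle\beta_i,\beta_j^\vee\rangle\le 0$ from $\beta_i-\beta_j\notin\Delta$. The argument of \cite{FN} passes through exactly this point but then works at the level of the algebra rather than root by root: choosing $e_i\in\g^{\beta_i}$ and $f_i\in\g^{-\beta_i}$ with $[e_i,f_i]=\beta_i^\vee$, one has $[e_i,f_j]=0$ for $i\ne j$ precisely because $\beta_i-\beta_j\notin\Delta\cup\{0\}$, so the generated subalgebra satisfies the Chevalley--Serre type relations of the generalized Cartan matrix $C$ and is identified with (a quotient of) the Kac--Moody algebra $\g(C)$ with the $\beta_i$ as simple roots; closure under addition of its full root system, real and imaginary, and the regularity of the subalgebra then follow from this structural identification. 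If you insist on a purely combinatorial route, you would at minimum have to take $\Delta'$ to be the image in $\Delta$ of the entire root system of $\g(C)$ and verify closure for that set, which essentially amounts to reproving the cited theorem.
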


The following lemma shows that in many cases we do not need to check even the simplified criterion.

\begin{lemma}
\label{indecom-angles}
Let  $\Delta_G$ and $\Delta_H\subset \Delta_G$ be hyperbolic root systems
with Weyl groups $G$ and $H\subset G$ respectively.
If each parabolic rank 2 subgroup of $H$ is  parabolic in $G$ then 
 $\Delta_H$ is a root subsystem of $\Delta_G$.
In particular, this is always the case if $\Delta_G$ is simply-laced.
% (i.e. all the roots in $\Delta_G$  are of the same length).

\end{lemma}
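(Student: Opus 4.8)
The plan is to verify the hypothesis of Lemma~\ref{simple roots}, namely that $\alpha-\beta\notin\Delta_G$ for all simple roots $\alpha,\beta$ of $\Delta_H$. Fix a set $\Pi_H=\{\alpha_1,\dots,\alpha_k\}$ of simple roots of $\Delta_H$ (these are positive real roots of $\Delta_H\subset\Delta_G$, hence positive real roots of $\Delta_G$ after choosing a compatible Weyl chamber). Suppose for contradiction that $\alpha_i-\alpha_j\in\Delta_G$ for some $i\ne j$. Since $\Delta_G$ is a root system of a hyperbolic Kac-Moody algebra, the $\alpha_i$-string through $\alpha_j$ is controlled by the Cartan integer $a_{ij}=\langle\alpha_j,\alpha_i^\vee\rangle$, and $\alpha_j-\alpha_i\in\Delta_G$ forces $a_{ij}\neq 0$ together with the string extending below $\alpha_j$. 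The key point is that this says something about the pair $(\alpha_i,\alpha_j)$ inside the rank $2$ parabolic-type subsystem they generate, and hence about the rank $2$ parabolic subgroup $\langle s_{\alpha_i},s_{\alpha_j}\rangle\subset H$.

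First I would make precise the dichotomy for a pair of simple roots $\alpha_i,\alpha_j$ of $\Delta_H$: the subgroup $\langle s_{\alpha_i},s_{\alpha_j}\rangle$ is a dihedral group $I_2(m)$ acting on the plane spanned by $\alpha_i,\alpha_j$ (or its boundary analogue when $m=\infty$); it is a rank $2$ parabolic subgroup of $H$ because $\alpha_i,\alpha_j$ are simple in $\Delta_H$ (so its fundamental chamber in that plane is a facet-pair of $C_H$). By hypothesis this subgroup is also parabolic in $G$, i.e. conjugate to a standard parabolic $\langle s_k,s_l\rangle$ of $G$. Now a standard rank $2$ parabolic subgroup of a hyperbolic simplicial group $G$ is the Weyl group of a rank $2$ sub-root-system of $\Delta_G$ spanned by two simple roots $\alpha_k,\alpha_l$ of $\Delta_G$, and for such a pair $\alpha_k-\alpha_l\notin\Delta_G$ (the difference of two distinct simple roots is never a root). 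I would then argue that conjugating by an element $w\in G$ carries the simple roots $\alpha_k,\alpha_l$ of this standard parabolic to a basis of simple roots of the conjugate parabolic whose root system is $w(\text{span})\cap\Delta_G$, and that $w(\alpha_k)-w(\alpha_l)=w(\alpha_k-\alpha_l)\notin\Delta_G$. The remaining step is to identify $\{\alpha_i,\alpha_j\}$ (up to sign) with $\{w(\alpha_k),w(\alpha_l)\}$: both are "simple" bases of the same rank $2$ root system $\Delta_G\cap(\text{plane through }\alpha_i,\alpha_j)$, and a rank $2$ root system has a unique pair of simple roots up to the Weyl group and overall sign, the Weyl action not affecting whether a difference lies in $\Delta_G$. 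Hence $\alpha_i-\alpha_j\notin\Delta_G$, the desired contradiction, and Lemma~\ref{simple roots} applies.

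For the last sentence, if $\Delta_G$ is simply-laced then every rank $2$ parabolic subgroup of $H$ — being dihedral $I_2(m)$ with $m\in\{2,3,\infty\}$ for angles $\pi/2,\pi/3,0$ as forced by the simply-laced hyperbolic diagrams and by the fact that $\pi/5$ does not occur in hyperbolic root systems (as noted after the Remark) — I would need to show is automatically parabolic in $G$. The cleanest route is: a rank $2$ parabolic subgroup of $H$ fixes a codimension $2$ subspace (or a point at infinity), and its image sits inside a rank $2$ reflection subgroup of $G$ fixing the same set; in the simply-laced case all dihedral reflection subgroups that arise are generated by roots whose pairwise differences are again roots or not, but here one uses that in a simply-laced $\Delta_G$ a rank $2$ reflection subgroup generated by real roots is already parabolic (its root system, having only one root length and angles $\pi/2,\pi/3$, is $A_1\times A_1$, $A_2$, or $A_1^{(1)}$, each of which embeds as a standard parabolic after a Weyl conjugation). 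Alternatively, and perhaps more safely, I would invoke Proposition~\ref{root sub} directly: in the simply-laced case, for any two distinct simple roots $\alpha_i,\alpha_j$ of $\Delta_H$ one has $\alpha_i-\alpha_j\notin\Delta_G$ because a simply-laced hyperbolic root system contains no root of the form (root)$-$(root) with both roots real and forming an angle in $\{\pi/2,\pi/3\}$ unless the difference is forced, which it is not.

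The main obstacle I anticipate is the middle step: transporting the statement "$\alpha_k-\alpha_l\notin\Delta_G$ for the standard simple roots of a standard parabolic" along a conjugating element of $G$ and then matching the transported basis with the given simple roots $\alpha_i,\alpha_j$ of $\Delta_H$. This requires care because $H$ is only a reflection subgroup of $G$, not necessarily respecting simple systems, so one must argue at the level of the rank $2$ root subsystem $\Delta_G\cap\Pi_{ij}$ (where $\Pi_{ij}$ is the plane of $\alpha_i,\alpha_j$) that its two possible simple bases are $W$-equivalent and sign-equivalent to $\{\alpha_i,\alpha_j\}$, and that "difference is a root" is invariant under both operations — the sign flip being the only genuinely nontrivial check, handled by noting $(-\alpha)-(-\beta)=-(\alpha-\beta)$ and $\Delta_G=-\Delta_G$. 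Once this bookkeeping is in place the proof is short.
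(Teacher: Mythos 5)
Your overall strategy coincides with the paper's: reduce to Lemma~\ref{simple roots} and use the hypothesis on rank~$2$ parabolic subgroups to rule out $\alpha_i-\alpha_j\in\Delta_G$. But the mechanism you use for the key step is much heavier than the paper's. The paper's proof is one sentence: if $\alpha_i-\alpha_j$ were proportional to a root $\gamma$ of $\Delta_G$, then $r_\gamma$ lies in the minimal parabolic subgroup of $G$ containing $r_{\alpha_i}$ and $r_{\alpha_j}$ (its mirror contains their common fixed subspace), yet $r_\gamma\notin\langle r_{\alpha_i},r_{\alpha_j}\rangle$, because the roots of that dihedral group are, up to sign, non-negative combinations of $\alpha_i,\alpha_j$ (here simplicity, i.e.\ $(\alpha_i,\alpha_j)\le 0$, is used), whereas $\alpha_i-\alpha_j$ has mixed signs. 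Hence $\langle r_{\alpha_i},r_{\alpha_j}\rangle$ would be a \emph{proper} subgroup of a parabolic subgroup of $G$ fixing the same subspace, contradicting its parabolicity in $G$ (parabolic subgroups are reflection-saturated: they contain every reflection of $G$ whose mirror contains their fixed set). Your route --- conjugating to a standard parabolic, using that the difference of two distinct simple roots is never a root, and then matching $\{\alpha_i,\alpha_j\}$ with the transported simple system --- can be made to work, but the matching step you yourself flag as the main obstacle genuinely requires a case check (uniqueness, up to the dihedral Weyl group and an overall sign, of obtuse generating pairs in each of $I_2(2)$, $I_2(3)$, $I_2(4)$, $I_2(6)$ and $\widetilde A_1$), which you do not carry out. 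The saturation argument avoids all of this bookkeeping.

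For the simply-laced clause, your ``cleanest route'' is flawed: it is not true that every rank~$2$ reflection subgroup of a simply-laced $G$ generated by real roots is parabolic. An $\widetilde A_1$ parabolic subgroup contains proper infinite dihedral reflection subgroups generated by real roots (for instance $r_{a_1}$ and $r_{-a_1+2\delta}$, of index $2$); these are abstractly of type $\widetilde A_1$ but are not parabolic, so ``its root system is of type $A_1^{(1)}$'' does not yield parabolicity. (The paper's own phrase ``no rank $2$ parabolic subgroup of $G$ contains a proper non-trivial subgroup'' is open to the same objection for the infinite case.) Your ``safer'' alternative is in fact the right one, but ``unless the difference is forced, which it is not'' is not an argument. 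The clean version is a norm computation: normalizing all real roots of a simply-laced $\Delta_G$ to $(\alpha,\alpha)=2$ and using $(\alpha_i,\alpha_j)\le 0$ for simple roots of $\Delta_H$, one gets $(\alpha_i-\alpha_j,\alpha_i-\alpha_j)=4-2(\alpha_i,\alpha_j)\ge 4$, which is neither $2$ (the norm of a real root) nor $\le 0$ (the norm of an imaginary root); hence $\alpha_i-\alpha_j\notin\Delta_G$ and Lemma~\ref{simple roots} applies directly, with no appeal to parabolicity at all.
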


\begin{proof}
The condition on parabolic subgroups implies that for any two simple roots $\alpha_1,\alpha_2$ of $\Delta_H$ the vector $\alpha_1-\alpha_2$ is never proportional to
any root of $\Delta_G$ (otherwise the parabolic subgroup of $H$ generated by reflections with respect to $\alpha_1$ and $\alpha_2$ is a proper subgroup of the parabolic subgroup of $G$ containing $\alpha_1$ and   $\alpha_1-\alpha_2$). 
So, Lemma~\ref{simple roots} proves the first statement of the lemma.

Furthermore, if $G$ is simply-laced, then no  rank 2 parabolic subgroup of $G$ contains a proper non-trivial subgroup and the lemma always applies.

\end{proof}

\section{Maximal subgroups}
\label{Maximal subgroups}

A reflection subgroup $H\subset G$ of a reflection group $G$ is called {\it maximal } if it is maximal by inclusion,
i.e.,  for any reflection subgroup $K$ such that $H\subset K \subset G$ we have either
$K=H$ or $K=G$.

To prove the next lemma
we will use the following fact conjectured in~\cite{subgr-h} and proved in more general settings in~\cite{subgr}. 

\begin{prop}[\cite{subgr}, Theorem~1.2] 
\label{subgroup}
 Let $G\subset \Isom (\H^n)$ be a discrete reflection group with a fundamental chamber $F$ of finite volume. 
Let $H\subset G$ be a finite index reflection subgroup.
Then $H$ can not be generated by less than $|F|$ reflections, where $|F|$ is the number of facets of $F$.

\end{prop}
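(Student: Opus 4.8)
The plan is to reduce the statement to the irreducibility of the geometric (Lorentzian) representation of $H$. Since $[G:H]<\infty$, the subgroup $H$ is itself a discrete reflection group of $\H^n$ whose fundamental chamber $F_H$ is a union of $[G:H]$ copies of $F$; in particular $F_H$ has finite volume. A discrete reflection group of $\H^n$ with a finite-volume fundamental chamber acts irreducibly on the ambient space $\mathbb{R}^{n+1}=\mathbb{R}^{n,1}$: a proper nonzero invariant subspace would, as the form has Witt index $1$, force the group to stabilize a point of $\H^n$, a point of $\partial\H^n$, or a proper totally geodesic subspace of $\H^n$, and in each case its fundamental domain would be noncompact of infinite volume. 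Hence $H$ acts irreducibly on $\mathbb{R}^{n,1}$.

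Now suppose $t_1,\dots,t_m$ are reflections generating $H$, and let $\beta_1,\dots,\beta_m\in\mathbb{R}^{n,1}$ be normal vectors to their mirrors. Since $t_i(\beta_j)$ is always a linear combination of $\beta_i$ and $\beta_j$, the nonzero subspace $V=\mathrm{span}(\beta_1,\dots,\beta_m)$ is $t_i$-invariant for each $i$, hence $H$-invariant; by irreducibility $V=\mathbb{R}^{n,1}$, so $m\ge n+1$. When $F$ is a simplex --- which is exactly the case relevant here, where $G$ is a hyperbolic simplicial group --- one has $|F|=n+1$, and the bound $m\ge n+1=|F|$ is precisely the assertion.

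For a general finite-volume chamber $F$ one only knows $|F|\ge n+1$, so the inequality $m\ge n+1$ must be supplemented by two facts: that the number of facets does not drop when passing to a finite-index chamber, $|F_H|\ge|F|$, and that a finite-volume reflection group cannot be generated by fewer reflections than its own fundamental chamber has facets (the canonical wall reflections being minimal). I expect the combinatorial comparison $|F_H|\ge|F|$ to be the genuine obstacle: when $F_H$ is built by gluing translates of $F$ across those mirrors of $G$ that are not mirrors of $H$, one must prevent many boundary facets of $F_H$ from collapsing onto few walls. A natural strategy is to exhibit an injection from the walls of $F$ to the walls of $F_H$ --- sending a wall of $F$ that is a mirror of $H$ to itself, since it automatically supports a facet of $F_H$ (as $F\subset F_H$ and $F_H$ is an $H$-chamber), and sending a wall $w$ of $F$ that is not a mirror of $H$ to the first wall of $F_H$ met by a generic geodesic ray crossing the facet of $F$ on $w$ --- or, alternatively, to reduce by induction on $[G:H]$ to the case where $H$ is a maximal reflection subgroup of $G$; the work then lies in proving this assignment is well defined and injective. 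Since the cited reference establishes the statement in this generality, here it suffices to invoke it, the simplex case above being self-contained.
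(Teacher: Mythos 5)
The paper contains no proof of this proposition --- it is quoted from \cite{subgr} --- so deferring to that reference is consistent with what the authors do, and your irreducibility argument is correct as far as it goes: a finite-covolume reflection group acts irreducibly on $\mathbb{R}^{n,1}$, the span of the normals of any generating set of reflections is an invariant subspace, hence any set of reflections generating $H$ has at least $n+1$ elements. The gap is in your claim that the simplex case is ``exactly the case relevant here.'' In every place the paper invokes Proposition~\ref{subgroup} (the remark immediately following it, and the proofs of Lemmas~\ref{simplicial} and~\ref{rk2}), the roles are the reverse of what you assume: it is the \emph{subgroup} that is known to be simplicial, while the ambient group $K=\langle H,G_1\rangle$ is a discrete reflection group whose fundamental chamber is a priori an arbitrary finite-volume polytope --- the entire point of invoking the proposition is to conclude that this chamber has at most $n+1$ facets and is therefore a simplex. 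Your bound $m\ge n+1$ carries no information about the facet count $|F|$ of the ambient chamber; applied in those proofs it reduces to the tautology $n+1\ge n+1$ and yields nothing about $K$. So the self-contained part of your argument establishes a statement that is never the one the paper actually needs.

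What is genuinely required is the facet-count bound $m\ge|F|$ with $|F|$ possibly larger than $n+1$. You correctly isolate the difficulty (preventing facets from collapsing when the chamber of $H$ is assembled from copies of $F$, i.e.\ some form of $|F_H|\ge|F|$ together with the minimality of the canonical generating set), but you do not resolve it: the wall-injection is only a sketch, and the well-definedness and injectivity issues you flag are real obstacles, not formalities. For what it is worth, Theorem~1.2 of \cite{subgr} is proved there by a combinatorial argument valid for arbitrary Coxeter groups rather than by the geometric route you outline. In short, invoking the reference is legitimate and matches the paper, but the portion you present as self-contained does not cover any case in which the proposition is actually used here.
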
 

In particular, if $H$ is a strictly simplicial group in $\H^n$ 
and $K$ is a discrete reflection group containing $H$, then $K$ is also strictly simplicial.

\begin{lemma}
\label{simplicial}
Let $H$ be a maximal simplicial subgroup of a strictly simplicial group $G\subset \Isom(\H^n)$,  $\rank H< \rank G$. 
Let $H_1$ be a finite parabolic subgroup of $H$ and 
let $G_1$ be a minimal parabolic subgroup of $G$ containing $H_1$.
Then  $G_1=H_1$. 

\end{lemma}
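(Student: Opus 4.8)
The plan is to prove the sharper statement that \emph{every} reflection of $G$ whose mirror contains $T:=\mathrm{Fix}(H_1)$ already lies in $H$. Since $G_1$ — being the minimal parabolic subgroup of $G$ containing the finite group $H_1$ — is the pointwise stabilizer of $T$, and as such is generated by the reflections of $G$ fixing $T$ pointwise, this sharper statement yields $G_1\subseteq H$; combined with $G_1\subseteq\mathrm{Stab}_H(T)$ and the fact that $\mathrm{Stab}_H(T)=H_1$ (because $H_1$ is parabolic in $H$), it gives $G_1=H_1$. For the geometric set-up I would let $\Pi$ be a minimal $H$-invariant totally geodesic subspace of $\H^n$ and put $H'=H|_\Pi$, so that $H\r-equiv H'$ and $H'$ is strictly simplicial in $\Pi$; since $\rank H=\rank H'=\dim\Pi+1$ is strictly less than $\rank G=n+1$, the subspace $\Pi$ is proper. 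As $H\to H'$ is an isomorphism, no reflection of $H$ fixes $\Pi$ pointwise, and a short tangent-space computation then shows that for each reflection of $H$ the mirror meets $\Pi$ in a hyperplane of $\Pi$ and the mirror's normal line is tangent to $\Pi$; equivalently, $H$ acts trivially on the normal bundle of $\Pi$. Hence, with $\pi\colon\H^n\to\Pi$ the nearest-point projection and $F:=\mathrm{Fix}_\Pi(H_1)$ (nonempty since $H_1$ is finite), we have $T=\pi^{-1}(F)$, so that $T\cap\Pi=F$ and $\dim T=\dim F+(n-\dim\Pi)=n-\rank H_1$.

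The crux is the claim that any reflection $\rho$ of $G$ whose mirror $M$ contains $T$ preserves $\Pi$ (and acts trivially on its normal bundle). To prove it I would fix $p\in F$ and use the orthogonal decomposition $T_p\H^n=T_pF\oplus(T_p\Pi)^{\perp}\oplus N_p$, where $N_p:=(T_pT)^{\perp}$. Since $T=\pi^{-1}(F)$ gives $T_pT=T_pF\oplus(T_p\Pi)^{\perp}$, one obtains $N_p=(T_pF)^{\perp}\cap T_p\Pi\subseteq T_p\Pi$, and therefore $T_p\Pi=T_pF\oplus N_p$. Now $\rho$ fixes $M\supseteq T$ pointwise, hence fixes $T_pT$ pointwise and preserves its orthogonal complement $N_p$; consequently $\rho$ preserves $T_pF\oplus N_p=T_p\Pi$, and an isometry fixing $p\in\Pi$ whose differential there preserves $T_p\Pi$ preserves $\Pi$. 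The same computation shows $\rho$ fixes $(T_p\Pi)^{\perp}\subseteq T_pT$ pointwise, so $\rho$ is trivial on the normal bundle of $\Pi$.

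With the claim in hand the lemma follows. Let $R$ be the subgroup of $G$ generated by all reflections of $G$ that preserve $\Pi$ and are trivial on its normal bundle; then $H\subseteq R$, and by the claim the reflections generating $G_1=\mathrm{Stab}_G(T)$ are among the generators of $R$, so $G_1\subseteq R$. The restriction $R|_\Pi$ is a discrete reflection group containing the strictly simplicial group $H'$, hence is itself strictly simplicial (the corollary of Proposition~\ref{subgroup} recalled above), so $R$ is a simplicial subgroup of $G$; but $R$ preserves the proper subspace $\Pi$, whereas $G$ — being strictly simplicial — preserves no proper totally geodesic subspace, so $R\neq G$. Maximality of $H$ now forces $R=H$, whence $G_1\subseteq H$, i.e. $G_1\subseteq\mathrm{Stab}_G(T)\cap H=\mathrm{Stab}_H(T)$. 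Finally, any $h\in H$ fixing $T$ fixes $F=T\cap\Pi$, so $h|_\Pi$ lies in $\mathrm{Stab}_{H'}(F)$, which equals $H_1|_\Pi$ because $H_1$ is parabolic in $H$ with $F$ the corresponding face of the fundamental simplex of $H'$; since $H\to H'$ is injective, $h\in H_1$. Thus $G_1=H_1$.

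I expect the one genuinely non-formal point to be the claim of the second paragraph: what makes it work is that the \emph{entire} normal space $N_p$ of $T$ at $p$ happens to sit inside $\Pi$, so that a reflection across any hyperplane through $T$ cannot move $\Pi$. The remaining ingredients — that $H$ acts trivially on the normal bundle of $\Pi$, the identification of $G_1$ with $\mathrm{Stab}_G(\mathrm{Fix}\,H_1)$ and of $\mathrm{Stab}_H(T)$ with $H_1$ via the parabolic–subspace correspondence of Remark~\ref{stabilizers}, and the irreducibility of strictly simplicial groups — are routine bookkeeping and should be dispatched quickly.
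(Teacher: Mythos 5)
Your argument is correct and follows essentially the same route as the paper's: both produce an intermediate reflection group between $H$ and $G$ that contains $G_1$, preserves the minimal $H$-invariant subspace $\Pi$ (hence is proper in $G$), and is simplicial by Proposition~\ref{subgroup}, so that maximality of $H$ forces it to coincide with $H$. The only real difference is that you prove in detail the geometric fact the paper merely asserts in passing --- that $\Pi$ contains the normal directions of $L=\mathrm{Fix}(H_1)$, so every reflection of $G$ fixing $L$ pointwise preserves $\Pi$ and acts trivially on its normal bundle --- and you work with the group generated by all such reflections rather than with $\langle H,G_1\rangle$.
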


\begin{proof}
%Following the proof of Lemma~\ref{maxsubgr}, suppose the contrary and
%consider the reflection subgroup  $K_1$ such that $H_1\subsetneq K_1 \subsetneq G_1$. Let $K=<H,K_1>$.
%Then we have $H\subsetneq K\subsetneq G$ (see the proof of Lemma~\ref{maxsubgr}).
%If $K$ is simplicial this contradicts to the assumption that  $H\subset G$ is a maximal simplicial subgroup.
%So, we are left to choose $K_1$ in such a way that $K$ is simplicial.

According to Proposition~\ref{properties}, $G_1 \supset H$ does exist, uniqueness of such a $G_1$ follows immediately.
Suppose that  $H_1\ne G_1$. 
Let $L\subset \H^n$ be a subspace preserved pointwise by $H_1$ (since $H_1$ is finite, $L$ contains some face of a fundamental chamber of $G$). 
Then $G_1$ is a maximal subgroup of $G$ fixing $L$.
Clearly, $H_1\subset G_1$ and $G_1$ is parabolic in $G$ (since $G_1$ is a stabilizer of $L$ in $G$).
Denote by $\Pi$ the minimal non-trivial $H$-invariant subspace of $\H^n$ 
(note that $\Pi$ is orthogonal to $L$, so $\Pi $ is preserved by $G_1$, too).

Now suppose that $H_1\ne G_1$ and
consider  $K=\langle H,G_1 \rangle$. Then $K\ne G$, since $K$ preserves $\Pi$ while $G$ does not.
On the other hand, $H\ne K$, since $H_1\ne G_1$.
If $K$ is simplicial, this contradicts the assumption that  $H\subset G$ is a maximal simplicial subgroup.
So, we are left to show that $K$ is simplicial.

Recall that  $H\subset \Isom(\Pi)$ is a finite covolume simplicial group. 
Since $K$ is a discrete subgroup of  $\Isom(\Pi)$ we see that $[K:H]<\infty$.
So $K$  is a  discrete reflections group  containing a finite index simplicial subgroup.
By Proposition\ref{subgroup},  $K$ is simplicial.

\end{proof}

\begin{remark}
\label{rem-geom sense}
Geometric sense of Lemma~\ref{simplicial} is the following: for any face of fundamental chamber of $H$,
its stabilizer in $H$ coincides with its stabilizer in $G$.
We will heavily use the following reformulation in terms of Coxeter diagrams: 

{\it
For each subdiagram (corresponding to a finite group) of $\Sigma(H)$ there is a subdiagram of the same type in $\Sigma(G)$.

}

\end{remark}

In a simplicial group $H$, any two standard generators are contained in a finite parabolic subgroup
unless $H$ is $\rr$-isomorphic to a group generated by reflections with respect to the sides of non-compact triangle.
For the latter case we will need the following

\begin{lemma}
\label{rk2}
Let $H$ be a maximal simplicial subgroup of a strictly simplicial group $G\subset \Isom(\H^n)$,  $3=\rank H< \rank G$. 
Let $H_1$ be an infinite parabolic subgroup of $H$ and 
let $K_1$ be a maximal infinite dihedral subgroup of $G$ containing $H_1$.
Then  $K_1=H_1$.

\end{lemma}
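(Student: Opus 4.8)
The plan is to mimic the proof of Lemma~\ref{simplicial}, replacing the finite parabolic subgroup $H_1$ by an infinite one and the point of $\H^n$ it fixes by a point $p\in\partial\H^n$. Since $\rank H=3$ and $H$ is simplicial but contains an infinite parabolic subgroup, $H$ is $\rr$-isomorphic to a reflection group generated by the sides of a non-compact hyperbolic triangle; the infinite parabolic subgroup $H_1$ is the stabilizer in $H$ of an ideal vertex of the fundamental triangle, so $H_1$ is an infinite dihedral group fixing a point $p\in\partial\Pi$, where $\Pi$ is the minimal non-trivial $H$-invariant subspace of $\H^n$ (so $H\r-equiv H'$ with $H'\subset\Isom(\Pi)$ strictly simplicial).

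First I would assume, for contradiction, that $H_1\neq K_1$. Let $K_1$ be the (unique) maximal subgroup of $G$ stabilizing the point $p\in\partial\H^n$ determined by $H_1$; as in Remark~\ref{stabilizers}, an infinite proper parabolic subgroup of $G$ stabilizes a point of $\partial\H^n$, and $K_1$ is this stabilizer, hence parabolic in $G$ and $K_1\supsetneq H_1$. Next I would form $K=\langle H,K_1\rangle$ and argue exactly as before: $K\neq G$ because both $H$ and $K_1$ preserve $\Pi$ (the point $p$ lies in $\partial\Pi$, and $K_1$, stabilizing $p$, together with the reflections it contains, must preserve the minimal $H$-invariant subspace — here one uses that $\Pi$ is canonically attached to $p$ and the horospherical structure at $p$), while $G$ does not preserve $\Pi$ since $\rank H<\rank G$; and $K\neq H$ because $K_1\not\subset H$. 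If $K$ is simplicial this contradicts maximality of $H$, so it remains to show $K$ is simplicial.

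To see $K$ is simplicial, I would restrict to $\Pi$: $H\subset\Isom(\Pi)$ is a finite-covolume simplicial group, $K$ is a discrete reflection group in $\Isom(\Pi)$ containing $H$, hence $[K:H]<\infty$, and by Proposition~\ref{subgroup} a discrete reflection group containing a finite-index strictly simplicial subgroup is itself strictly simplicial; therefore $K\r-equiv$ a strictly simplicial group and $K$ is simplicial. This yields the contradiction and forces $K_1=H_1$.

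The main obstacle I anticipate is the step showing $K=\langle H,K_1\rangle$ preserves $\Pi$ — that is, that the full stabilizer $K_1$ of the ideal point $p$ in $G$ leaves the minimal $H$-invariant subspace $\Pi$ invariant. In the finite case (Lemma~\ref{simplicial}) this was clean because $\Pi$ was orthogonal to the fixed subspace $L$ and $G_1$ was the pointwise stabilizer of $L$; in the ideal-point case one must instead use that the reflections in $K_1$ have mirrors passing through (or asymptotic to) $p$, and that $\Pi$ is determined by $p$ together with the $H_1$-action — concretely, $\Pi$ is the smallest totally geodesic subspace whose boundary contains $p$ and which is invariant under the dihedral group $H_1$; one has to verify that adjoining arbitrary reflections of $G$ fixing $p$ cannot enlarge this minimal subspace without contradicting discreteness of $K$ in $\Isom(\Pi)$. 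I would handle this by first establishing $[K:H]<\infty$ directly from discreteness in $\Isom(\H^n)$ restricted near $p$, and only then concluding $K$ acts on $\Pi$; alternatively one invokes that a discrete reflection group containing the finite-covolume group $H'$ on $\Pi$ must act on $\Pi$, making the $\Pi$-invariance a consequence rather than a hypothesis.
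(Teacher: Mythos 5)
Your overall strategy (pass to $K=\langle H,K_1\rangle$, show $K\ne H$ and $K\ne G$, get $[K:H]<\infty$ and invoke Proposition~\ref{subgroup} to contradict maximality) is exactly the paper's argument. But you have silently changed the object $K_1$, and that change breaks the proof. The lemma takes $K_1$ to be a \emph{maximal infinite dihedral} subgroup of $G$ containing $H_1$, whereas you take $K_1$ to be the full stabilizer $\mathrm{Stab}_G(p)$ of the ideal point $p$ fixed by $H_1$. These are very different groups: if $p$ is a parabolic fixed point of $G$ with $\rank G=n+1>3$, then $\mathrm{Stab}_G(p)$ contains a Euclidean reflection group of rank up to $n$ acting on a horosphere centered at $p$, while $H_1$ is infinite dihedral (rank $2$). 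With your $K_1$ the conclusion ``$K_1=H_1$'' is simply false in general, and the step you yourself flag as the main obstacle --- that $\langle H,K_1\rangle$ preserves $\Pi$ --- genuinely fails: $\Pi$ is \emph{not} canonically attached to $p$ (it depends on all of $H$, not just on $H_1$), and reflections of $G$ fixing $p$ whose mirrors are not orthogonal to $\Pi$ will move $\Pi$. Your fallback, ``first establish $[K:H]<\infty$ from discreteness near $p$,'' does not work either: $\langle H,\mathrm{Stab}_G(p)\rangle$ can be a large non-simplicial group, even all of $G$.

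The point of restricting to the maximal infinite dihedral subgroup is precisely that it makes the $\Pi$-invariance automatic. Since $H_1\subset K_1$ and both are infinite dihedral, in a horosphere centered at $p$ all mirrors of $K_1$ are Euclidean hyperplanes orthogonal to the single direction determined by the mirrors of $H_1$; that direction lies in $\Pi$ because $H_1$ acts on $\Pi$ by reflections. Hence every mirror of $K_1$ meets $\Pi$ orthogonally and $K_1$ preserves $\Pi$, so $K=\langle H,K_1\rangle$ preserves $\Pi$, giving $K\ne G$, and discreteness of $K$ in $\Isom(\Pi)$ together with the finite covolume of $H$ there gives $[K:H]<\infty$. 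From that point your argument coincides with the paper's. So the fix is not to enlarge $K_1$ to the full stabilizer but to keep it dihedral and justify the orthogonality of its mirrors to $\Pi$ as above.
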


\begin{proof}
Suppose that $K_1\ne H_1$. 
Following the proof of Lemma~\ref{simplicial}, consider $K=\langle H,K_1 \rangle$.
Then $K\ne H$, since  $K_1\ne H_1$. On the other hand, $K\ne G$ since $K$ preserves the minimal $H$-invariant subspace $\Pi$
 ($\Pi \ne \H^n$ since  $\rank H< \rank G$). By the same reason, $[K:H]<\infty$.
By Proposition~\ref{subgroup},  $K$ is simplicial, which contradicts the assumption that 
$H\subset G$ is a maximal simplicial subgroup.

\end{proof}

Denote by $r_{\alpha}\in W(\Delta)$ the reflection with respect to the root $\alpha \in \Delta $
(where $W(\Delta)$ is the Weyl group of $\Delta$).

%Combining Lemma~\ref{simplicial} with Lemma~\ref{indecom-angles} we obtain

\begin{theorem}
\label{subalg for max subgr}
Let $H$ be a maximal simplicial subgroup of a strictly simplicial group $G\subset \Isom(\H^n)$,  $\rank H< \rank G$. 
Let $\Delta_G$ be any root system with $W(\Delta_G)=G$. Then the root system 
$\Delta_H=\{\alpha \in \Delta_G | r_{\alpha}\in H\}$ is a root subsystem
of $\Delta_G$, and $W(\Delta_H)=H$.

\end{theorem}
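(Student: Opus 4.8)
The plan is to show that $\Delta_H$, as defined, is genuinely a root subsystem of $\Delta_G$ and that its Weyl group is exactly $H$. The second assertion is almost immediate: by construction $\Delta_H$ is stable under every $r_\alpha$ with $r_\alpha\in H$ (since $H$ normalizes the set of reflections it contains and $r_\beta r_\alpha r_\beta^{-1}=r_{r_\beta(\alpha)}$), so $W(\Delta_H)\subseteq H$; conversely, since $H$ is a reflection subgroup of $G$, every standard generator of $H$ is some $r_\alpha$ with $\alpha\in\Delta_G$, whence $\alpha\in\Delta_H$ and that generator lies in $W(\Delta_H)$. So $W(\Delta_H)=H$, and in particular $\Delta_H$ has a set of simple roots $\Pi_H$ with $|\Pi_H|=\rank H$, i.e.\ $\Delta_H$ is the root system of a hyperbolic (strictly simplicial) Weyl group.

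The heart of the matter is verifying that $\Delta_H\subset\Delta_G$ is closed under addition, for which I would invoke the simplified criterion of Lemma~\ref{simple roots}: it suffices to check that $\alpha-\beta\notin\Delta_G$ for all $\alpha,\beta\in\Pi_H$. Here is where the maximality hypothesis enters, via Lemma~\ref{indecom-angles}. By that lemma it is enough to prove that every parabolic rank~2 subgroup of $H$ is parabolic in $G$. A parabolic rank~2 subgroup of $H$ is generated by $r_\alpha, r_\beta$ for two simple roots $\alpha,\beta\in\Pi_H$; such a subgroup is either finite or an infinite dihedral group, and in either case it is a parabolic (indeed standard parabolic, up to conjugacy) subgroup $H_1$ of the simplicial group $H$. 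Now apply Lemma~\ref{simplicial} when $H_1$ is finite, and Lemma~\ref{rk2} when $H_1$ is infinite (this second case can only occur when $\rank H=3$ and $H$ is the reflection group of a noncompact triangle, which is exactly the situation Lemma~\ref{rk2} addresses): in both cases the conclusion is that the minimal parabolic (resp.\ the maximal infinite dihedral) subgroup of $G$ containing $H_1$ equals $H_1$ itself, so $H_1$ is parabolic in $G$. Having checked the hypothesis of Lemma~\ref{indecom-angles}, that lemma gives at once that $\Delta_H$ is a root subsystem of $\Delta_G$.

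One bookkeeping point to handle carefully: Lemma~\ref{simplicial} and Lemma~\ref{rk2} are stated for \emph{parabolic} subgroups $H_1$ of $H$ in the abstract reflection-group sense, while what I produce from $\alpha,\beta\in\Pi_H$ is a \emph{standard} parabolic rank~2 subgroup of $H$; these coincide, so the hypotheses of those lemmas are met verbatim. The main obstacle, then, is not any single deep step but rather making sure the case division (finite vs.\ infinite rank~2 parabolic in $H$) is exhaustive — i.e.\ that in a simplicial group every pair of standard generators lies in a rank~2 parabolic that is either finite or infinite dihedral, with the infinite case confined to the noncompact-triangle situation already isolated before Lemma~\ref{rk2}. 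Once that is in place, the theorem follows by combining Lemmas~\ref{simplicial}, \ref{rk2}, \ref{indecom-angles}, and \ref{simple roots} as above.
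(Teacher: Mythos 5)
Your first case (every pair of standard generators of $H$ generates a finite parabolic) and your treatment of $W(\Delta_H)=H$ are fine and match the paper's argument. The gap is in the noncompact-triangle case. You claim that Lemma~\ref{rk2} shows the infinite dihedral subgroup $H_1=\langle r_\alpha,r_\beta\rangle$ is \emph{parabolic in $G$}, so that Lemma~\ref{indecom-angles} applies uniformly. That conclusion is false: since $\rank H<\rank G$ forces $\rank G\ge 4$, the group $G$ acts on $\H^n$ with $n\ge 3$, where the fundamental simplex has no zero dihedral angles, so every rank~2 parabolic subgroup of $G$ is finite. Hence an infinite dihedral reflection subgroup of $G$ is never parabolic in $G$, the hypothesis of Lemma~\ref{indecom-angles} genuinely fails for such $H$, and that lemma cannot be invoked. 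Lemma~\ref{rk2} only says that $H_1$ is maximal among infinite dihedral subgroups of $G$ containing it, which is a weaker (and different) statement than parabolicity.

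The repair is exactly the route the paper takes: in the noncompact-triangle case, do not pass through Lemma~\ref{indecom-angles} at all, but verify the criterion of Lemma~\ref{simple roots} directly. For a pair of simple roots $\alpha,\beta\in\Pi_H$ whose reflections generate a finite group, $\alpha-\beta\notin\Delta_G$ follows from Lemma~\ref{simplicial} as in the proof of Lemma~\ref{indecom-angles}. For the pair with zero angle, if $\alpha-\beta$ were (proportional to) a root of $\Delta_G$, then $r_{\alpha-\beta}$ together with $H_1$ would generate an infinite dihedral subgroup of $G$ strictly containing $H_1$ (the mirror of $r_{\alpha-\beta}$ passes through the same ideal point, and $r_{\alpha-\beta}\notin H_1$ because $\alpha-\beta$ is never a root of $\Delta_H$ for simple $\alpha,\beta$), contradicting Lemma~\ref{rk2}. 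With that substitution your argument closes.
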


\begin{proof}
First, suppose that $H$ is not $\rr$-isomorphic to a group generated by reflections with respect to the sides of non-compact triangle.
Then any pair of generating reflections generate a finite parabolic subgroup of $H$. By Lemma~\ref{simplicial}, this dihedral group
is parabolic in $G$. Thus, by Lemma~\ref{indecom-angles}, $\Delta_H$ is a root subsystem of $\Delta_G$. 

Now, let  $H$ be $\rr$-isomorphic to a group generated by reflections with respect to the sides of non-compact triangle.
By Lemma~\ref{simple roots}, it is sufficient to show that $\alpha-\beta\notin \Delta_G$ for all simple roots $\alpha, \beta\in\Delta_H$. This follows from Lemma~\ref{simplicial} if the reflections in $\alpha$ and $\beta$ generate a finite group, and from Lemma~\ref{rk2} otherwise.

The assertion $W(\Delta_H)=H$ is evident.

\end{proof}

The case of equal ranks is treated in~\cite{T}. More precisely,
Theorem~1 of~\cite{T} combined with Lemma~1 of~\cite{T} imply that if  $H\subset G$ is a maximal simplicial 
subgroup and $\rank H=\rank G$, then there is at least one pair of algebras $(\h ,\g)$
such that $\h$ is a regular subalgebra of $\g$ and $\h$ and $\g$ are chosen among the algebras 
with Weyl groups of their root systems coinciding with $H$ and $G$ respectively. 
Combining this with Theorem~\ref{subalg for max subgr} we have

\begin{cor}
\label{subalg for max subgr1}
Let $G\subset \Isom(\H^n)$ be an arithmetic over $\Q$ strictly simplicial group,  
and $H\subset G$ is a maximal 
simplicial subgroup. Then there exist Kac-Moody algebras $\g$ and $\h$ such that 
$G= W_{\g}$, $H= W_{\h}$ and $\h$ is a regular subalgebra of $\g$.

\end{cor}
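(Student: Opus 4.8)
The plan is to combine the two maximal-subgroup cases --- distinct ranks and equal ranks --- both of which are already essentially handled by results stated above, and to package them into the desired existence statement. First I would split into the cases $\rank H<\rank G$ and $\rank H=\rank G$, since these are controlled by completely different inputs: the distinct-rank case by Theorem~\ref{subalg for max subgr} of this paper, and the equal-rank case by the results of~\cite{T} (Theorem~1 together with Lemma~1).

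In the case $\rank H<\rank G$: since $G$ is arithmetic over $\Q$, by the remark in Section~\ref{Preliminaries} it is the Weyl group of some hyperbolic root system $\Delta_G$; fix such a $\Delta_G$ and let $\g=\g(A)$ be a corresponding hyperbolic Kac-Moody algebra, so $W_\g=G$. Now apply Theorem~\ref{subalg for max subgr} to this $\Delta_G$: the subset $\Delta_H=\{\alpha\in\Delta_G\mid r_\alpha\in H\}$ is a root subsystem of $\Delta_G$ with $W(\Delta_H)=H$. By the discussion in Section~\ref{reduction}, a root subsystem of $\Delta_G$ is precisely the root system of a regular subalgebra of $\g$; let $\h\subset\g$ be this regular subalgebra. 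Then by construction $W_\h=W(\Delta_H)=H$ and $W_\g=G$, and $\h$ is regular in $\g$, which is exactly the assertion. (One should note $\h$ is genuinely hyperbolic: $\Delta_H$ lives in the minimal $H$-invariant subspace $\Pi$, on which $H$ acts as a strictly simplicial --- hence hyperbolic --- group, so the Cartan matrix built from a set of simple roots of $\Delta_H$ is of hyperbolic type.)

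In the case $\rank H=\rank G$: here Theorem~\ref{subalg for max subgr} does not directly apply, but this is exactly the situation covered in~\cite{T}. As recalled just before the corollary, Theorem~1 of~\cite{T} combined with Lemma~1 of~\cite{T} gives a pair of algebras $(\h,\g)$ with $\h$ regular in $\g$ and $W_\h=H$, $W_\g=G$ (using that $G$ is arithmetic over $\Q$ so that such algebras exist). This finishes the remaining case, and the two cases together establish the corollary.

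The main obstacle --- really the only nontrivial point --- is verifying that in the distinct-rank case the root subsystem $\Delta_H$ produced by Theorem~\ref{subalg for max subgr} has hyperbolic type, so that $\h$ qualifies as a \emph{hyperbolic} regular subalgebra rather than merely a regular subalgebra of finite or affine type; this is where one uses that $H$, restricted to its minimal invariant subspace $\Pi$, is strictly simplicial of finite covolume in $\H^{n'}$ with $n'\ge 2$, which forces the associated symmetrizable Cartan matrix to be indefinite with all proper principal submatrices of finite or affine type. Everything else is a matter of chaining together Theorem~\ref{subalg for max subgr}, the reduction of Section~\ref{reduction}, the arithmeticity criterion from Section~\ref{Preliminaries}, and the cited results of~\cite{T}.
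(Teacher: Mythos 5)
Your proposal is correct and follows essentially the same route as the paper: the corollary is obtained exactly by combining Theorem~\ref{subalg for max subgr} for the case $\rank H<\rank G$ with Theorem~1 and Lemma~1 of~\cite{T} for the case $\rank H=\rank G$. Your additional check that $\Delta_H$ is of hyperbolic type is a reasonable extra precaution (the paper leaves it implicit, and the corollary as stated only claims regularity), but it does not change the argument.
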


\section{Visual subgroups}
\label{visual}

In this section we describe a class of subgroups which contains almost all maximal infinite index simplicial subgroups
of simplicial reflection groups. On the other hand, the subgroups of this type  could be easily found 
just by taking a look at the Coxeter diagram of the group $G$.    

We say that a subgroup $H$ of a reflection group $G=\langle r_0,r_1,\dots,r_n \rangle$ is {\it visual} if $H$ is generated by some standard parabolic subgroup $H_1\subset G$  and an additional reflection $r_ir_jr_i$ for some $r_i,r_j$ not belonging to $H_1$.

Let $G$ be a finite covolume reflection group in $\H^n$, and let $F$ be a fundamental chamber of $G$.
Denote by $v_0,v_1,\dots,v_n$ the outward normal vectors of $F$. We normalize $v_k$ so that $(v_k,v_k)=2$.
Then a visual subgroup of $G$ is one generated by reflections with respect to some of $v_k$ and a vector
$$ v_*=v_j-(v_i,v_j)v_i.$$

In particular, if the edge $v_iv_j$ of $\Sigma(G)$ is simple then $ v_*=v_j+v_i$.
The Coxeter diagram $\Sigma(H)$ of the subgroup is then easy to obtain from $\Sigma(G)$ in the following way:
\begin{itemize}
\item shrink  the edge $v_iv_j$  into a new vertex $v_*$;
\item for each vertex $v_k$ joined with both $v_i$ and $v_j$ draw a new edge  $v_kv_*$  
which is bold  if both edges $v_kv_i$ and $v_kv_j$ are simple, and  dotted  otherwise;
\item for each $v_k$ joined with exactly one of $v_i$ and $v_j$ the edge $v_kv_*$ has the same weight as $v_kv_i$ (or $v_kv_j$) had;
\item all the other edges remain intact;
\item take a subdiagram of the obtained diagram if needed.

\end{itemize}

See Fig.~\ref{vis} for an example of visual subgroup.

\begin{figure}[!h]
\begin{center}
\psfrag{i}{$v_i$}
\psfrag{j}{$v_j$}
\psfrag{*}{\small $v_*=v_i+v_j$}
\psfrag{3}{\small $3$}
\psfrag{4}{\small $4$}
\psfrag{5}{\small $5$}
\psfrag{6}{\small $6$}
\psfrag{7}{\small $7$}
\psfrag{8}{\small $8$}
\psfrag{9}{\small $9$}
\psfrag{01}{}
\psfrag{11}{\small $(3,4,4)$}
\epsfig{file=./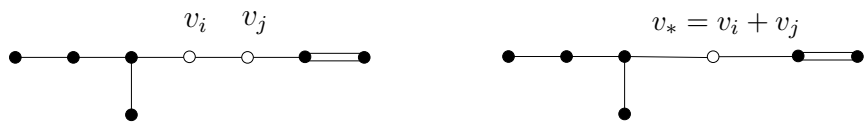,width=0.9\linewidth}
\caption{Example of a visual subgroup $H\subset G$: $\Sigma(G)$ to the left, $\Sigma(H)$ to the right.}
%(see also Fig~\ref{ideal})}
\label{vis}
\end{center}
\end{figure}

%If $v_i$ is joined with $v_j$ by a non-simple edge, $\Sigma(H)$ still can be obtained by a shrinking
%of the edge $v_iv_j$, but in this case almost all edges coming to $v_i$ turn into dotted edges coming to $v_*$.
%There are only two exclusions, both in the case of a double edge $v_iv_j$:
%\begin{itemize}
%\item[1)] a simple edge $v_kv_i$ turns into a double edge $v_kv_*$;
%\item[2)] a double edge $v_kv_i$ turns into a bold edge $v_kv_*$.
%\end{itemize}

\begin{remark} 
As it is easy to see from Tables~\ref{3}--~\ref{6-5}, visual subgroups of corank 1 of simplicial groups turn out to be maximal.
This motivates the following 

\end{remark}

\begin{question} 
Are there visual non-maximal reflection subgroups of corank 1 in generic Coxeter groups?
\end{question}

\section{Subgroup relations: to be or not to be?}
\label{methods}

Given two simplicial groups $G$ and $H$, we need to understand if $H$ can be embedded into $G$ as a reflection subgroup.
For many of the pairs $(G,H)$ we have immediate affirmative answer due to compositions 
$$H=G_0 \subset G_1\subset \dots \subset G_k= G$$
of visual subgroups and subgroups of the maximal rank (i.e. with $\rank G_i=\rank G_{i+1}$).
For all the other pairs $(G,H)$ we need to check subgroup relations.

In this section we describe various methods to check if $H$ can be embedded into $G$. 
We will use these methods to check subgroup relations for almost all pairs $(G,H)$.
The remaining cases are covered by the algorithm presented in Section~\ref{algorithm}.

\subsection{Large balls in upper half-space model}
\label{large balls}

%In Section~\ref{algorithm} we describe an algorithm which allows to determine if a simplicial group $H$
%is a maximal subgroup of a simplicial group $G$ unless $H$ is isomorphic to a group generated by an ideal simplex.
In this section we develop a method  suitable for the case when $H$ is not {\rr}-isomorphic to a cocompact strictly simplicial group.

Our idea is to use geometry of upper half-space model of hyperbolic $n$-space. 
We embed an infinite maximal parabolic subgroup  $H_1\subset H$ into some infinite parabolic subgroup $G_1\subset G$
(see Proposition~\ref{properties}),
assuming that $G_1$ preserves the point at  infinity.
Then the subgroup $H_1\subset G_1$  can be understood as reflection subgroup of a Euclidean reflection group $G_1$ 
(acting on an horosphere centered at infinity), so $H_1$ is easy to describe. 
To understand if $H\subset G$ we need to find (or to prove non-existence of) 
a mirror $m_*$ (i.e. a hemisphere orthogonal to $\partial \H^n$ in the model) forming some prescribed angles with
the mirrors $m_1,\dots,m_k$ corresponding to the reflections generating $H_1$.
Since $m_*$ should intersect (at least at  $\partial \H^n$) all the mirrors bounding the fundamental domain of $H_1$,
the Euclidean radius of the hemisphere is relatively big. Checking hemispheres (mirrors of $G$) of relatively big radius
we either find $m_*$ or prove that it does not exist (we check mirrors of $G$  modulo the group $H_1$, so there are finitely many
of hemispheres in question of radius greater than any given number).

\begin{notation}
We will use the notation introduced in Fig~\ref{but-ex}. In particular, 
notation for the types of hyperbolic simplices is borrowed from~\cite{JKRT-vol}.
A rank 3 group with fundamental triangle with angles $(\pi/q_1,\pi/q_2,\pi/q_3)$ is denoted $(q_1,q_2,q_3)$ (for zero angles we write 0, so that $(0,0,0)$ is the group corresponding to the ideal triangle).

\end{notation}

\begin{figure}[!h]
\begin{center}
\psfrag{0}{}
\psfrag{b1}{\small ${\mathcal F}_2 $}
\psfrag{b2}{\small ${\mathcal F}_3 $}
\psfrag{b3}{\small ${\mathcal F}_4 $}
\psfrag{4}{\small $[3^{1,1,1,1,1}]$}
\psfrag{5}{\small $[4;3^{1,1,1}]$}
\psfrag{6}{\small $(0,2,4)$}
\psfrag{7}{\small $(0,0,2)$}
\psfrag{8}{\small $[(3^2,4^2)]$}
\psfrag{9}{\small $(0,0,3)$}
\psfrag{10}{\small $(0,3,3)$}
\psfrag{11}{\small $(3,4,4)$}
\psfrag{i1}{\small $(0,0,0)$}
\psfrag{i2}{\small $[4^{[4]}]$}
\psfrag{i3}{\small $[3^{[3,3]}]$}
\psfrag{i4}{\small $[(3,6)^{[2]}]$}
\psfrag{i5}{\small $[(3^2,4)^{[2]}$}
\epsfig{file=./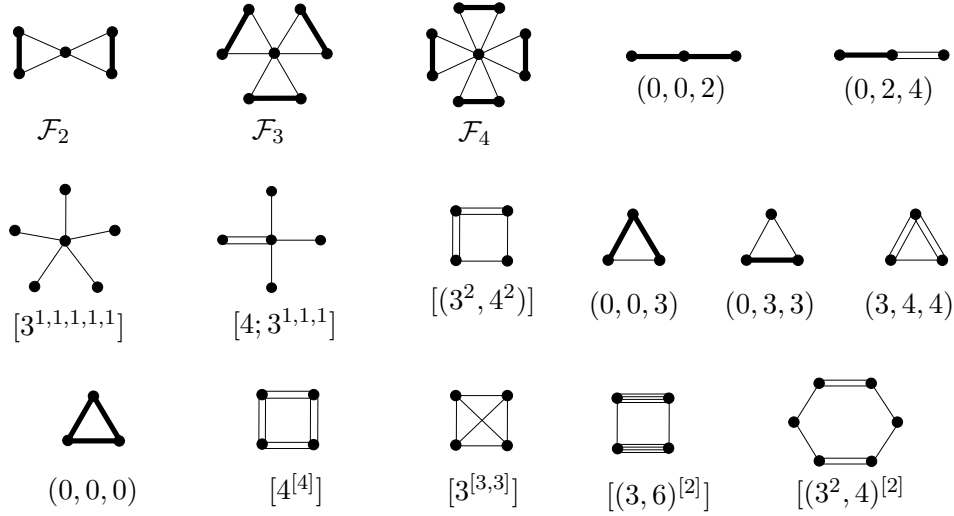,width=0.99\linewidth}
\caption{Notation for some hyperbolic reflection groups. The last line contains the list of all ideal simplices.}
%(see also Fig~\ref{ideal})}
\label{but-ex}
\end{center}
\end{figure}

\begin{example}
\label{ex-id2inid3}
Neither the group $H=(0,0,0)$ nor $H'=(0,3,3)$ is 
  a subgroup of $G=[3^{[3,3]}]$. 
We will prove it for $H$, the statement for $H'$ will follow.

Consider an ideal simplex $F_0$ generating the group $G$.
We place $F_0$ in such a way that one of its vertices $V_{\infty}$ is the point at infinity, and the other three vertices 
form a regular triangle at $\partial \H^3$ (see Fig.~\ref{idreg}(a)). 
We embed the subgroup $H_1$ (which is infinite dihedral group) into the stabilizer of $V_{\infty}$.
Then $H_1$ is generated by reflections with respect to two parallel planes. Denote these planes by
 $\tilde m_1$ and $\tilde m_2$ (now $H_1$ is not a parabolic subgroup
of $G$ but only a subgroup of one).
The third generator of $H$ should be a reflection (inversion in Euclidean sense) with respect to some mirror $m_*$ of the group $G$,
this mirror should be tangent to both   $\tilde m_1$ and $\tilde m_2$.  
It is easy to see that the face $m_0$ of $F_0$ opposite to the vertex  $V_{\infty}$ is the largest hemisphere  
amongst mirrors of $G$.
If we scale the Euclidean distances at  $\partial \H^3$ so that  $m_0$ is a unit hemisphere,
then the Euclidean distance between the closest parallel mirrors of $G$ is $3/2$ (see Fig.~\ref{idreg}(b)).

\begin{figure}[!h]
\begin{center}
\psfrag{a}{(a)}
\psfrag{b}{(b)}
\psfrag{m1}{\scriptsize $\tilde m_1$}
\psfrag{m2}{\scriptsize $\tilde m_2$}
\psfrag{1}{\scriptsize 1}
\psfrag{32}{\scriptsize$\frac{3}{2}$}
\epsfig{file=./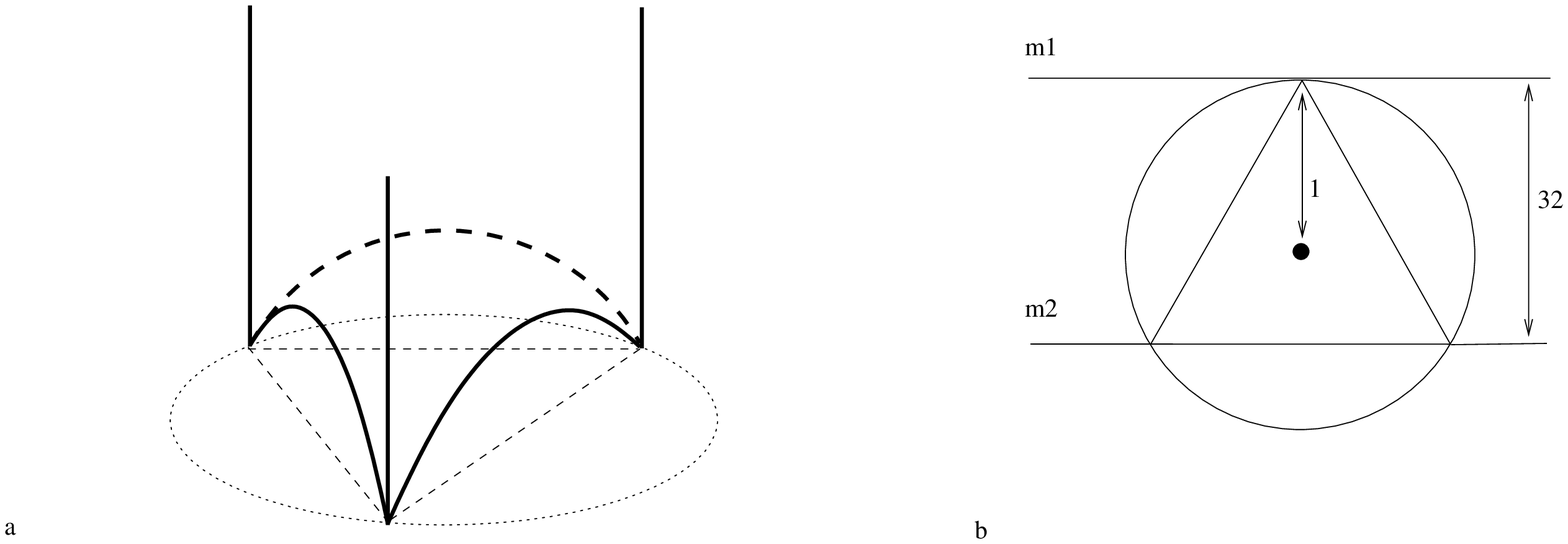,width=0.7\linewidth}
\caption{(a) ideal regular simplex in the upper half-space model, (b) projection onto the boundary.}
\label{idreg}
\end{center}
\end{figure}

In particular, this implies that the planes $\tilde m_1$ and $\tilde m_2$ are the closest parallel planes,
otherwise no hemisphere can touch both of them.
%Knowing this, we could switch to the method of indefinite coefficients (see Section~\ref{indef}),
%but we proceed by the large ball method up to the end.

Now we will show that no of the hemispheres except the unit one can have points in common (even at infinity) 
with both   $\tilde m_1$ and $\tilde m_2$.
It is sufficient to show that the radius $R$ of the largest hemisphere which is smaller than $m_0$ 
satisfies $R<3/4$.
We reflect $F_0$ with respect to the mirror $m_0$ and denote by $F_1$ the image of $F_0$.
Notice that the union $F_0\cup F_1$ contains all points of fundamental domain of $H_1$ that lie above 
the horosphere $z=1/2$ in our model,
where $z$ is an axis orthogonal to $\partial \H^n$.
So, besides $m_0$ (and its translates by $H_1$) no hemisphere of the radius greater than $1/2$ can be a mirror of $G$.

A unit hemisphere $m_0$ gives rise to a subgroup of the type $(0,0,3)$.
This implies that no other group of type $(0,p,q)$ is a subgroup of $G$.
In particular, neither $(0,0,0)$ nor $(0,3,3)$ is a subgroup of $G$.

\end{example}

The method of large balls can be also applied for a non-simplicial group $G$. The only restriction is that
 a simplicial group $H$ should have an infinite maximal parabolic subgroup.

\subsection{Method of indefinite coefficients}
\label{indef}

This method is the main tool of~\cite{T1} where the corank 1 hyperbolic subalgebras of hyperbolic
Kac-Moody algebras are investigated.
The method works nicely for all arithmetic over $\Q$ groups $G$.
For this class of groups the problem reduces to some Diophantine equation with integer 
coefficients. For the groups apart from this class the method still sometimes works but it leads to a Diophantine equation with
coefficients and variables in one of the rings $\Z[\sqrt 2 ]$, $\Z[\frac{1+\sqrt 5}{2}]$ and $\Z[\sqrt 2,\frac{1+\sqrt 5}{2}]$. 
We will use the method only for arithmetic over $\Q$ groups $G$.

For each of the $n+1$ facets $f_i$ of the fundamental domain $F$ of $G$
we consider an outward normal vector $v_i$.
We normalize vectors in such a way that the square of length of the shortest ones is equal to 2.
The vectors  corresponding to the ends of a double edge of $\Sigma(G)$ 
would have squares $2$ and $4$,
while vectors  corresponding to the ends of 4-tuple edge of $\Sigma(G)$ 
would have  squares $2$ and $6$.
Then any vector of type $gv_j$, $g\in G$, is an integer combination of $v_i$'s.

We embed a maximal parabolic subgroup $H_1\subset H$ into some parabolic subgroup $G_1\subset G$
writing down the explicit expressions of vectors of generating reflections $u_1,\dots,u_k$ in terms of $v_0,v_1,\dots,v_n$.
Denote by $u_*$ the vector corresponding to the missed generating reflection of $H$
and write $$u_*=\sum\limits_{i=0}^{n} k_iv_i.$$
Vector $u_*$ satisfies several equations. Namely, the value of each of
$(u_*,u_i)$ (where $i=1, \dots,k$) can be read from the Coxeter diagram $\Sigma(H)$.
In this way we get $k$ linear equations with integer coefficients with respect to $k_i$.
Another one (quadratic) equation  can be obtained in the following way: $\Sigma(G)$ encodes the length of $u_*$, so
$(u_*,u_*)$ is known.

It turns out that in most cases either the system has no solutions (usually it is enough to
consider these equations modulo 2),
or one can find among the solutions one corresponding 
to a vector $v_*$ which could be obtained from some of $v_i$ by the action of $G$.

\begin{example}
\label{ideal tetrahedron}
A group $H=(0,0,3)$
is not a subgroup of $G=[3^{1,1,1,1,1}]$ (see Fig.~\ref{star33333d}).

Let $v_0$ be a vector corresponding to the valence 5 vertex of $\Sigma(G)$ and let 
$v_1,\dots,v_5$ be the vectors corresponding to the remaining vertices, set $(v_i,v_i)=2$.  
The only finite maximal parabolic subgroup of $H$ can be embedded into $G$ in a unique up to a symmetry way:
$u_1=v_0$, $u_2=v_1$. Let $u_*=\sum\limits_{i=0}^{5} k_iv_i$.
Then we have
\begin{center}
\begin{tabular}{l}
$-2=(u_*,v_0)=2k_0-\sum\limits_{i=1}^5 k_i,$ \\
$-2=(u_*,v_1)=2k_1-k_0,$                     \\ 
$\phantom{-}2=(u_*,u_*)=2\sum\limits_{k=0}^5 k_i^2-2k_0\sum\limits_{i=1}^5 k_i.$ 
\end{tabular}
\end{center}

From the first two of these equations we find 
$$k_0=2k_1+2 \text{ \quad  and \quad } k_5=3k_1-k_2-k_3-k_4+6.$$
Eliminating $k_0$ and $k_5$ from the third equation, we get
$$1=4(k_1+1)^2+\sum\limits_{i=1}^4 k_i^2+(3k_1-k_2-k_3-k_4+6)^2-2(k_1+1)(4k_1+6).$$ 
This equation modulo 2 is equivalent to
$$1=\sum\limits_{i=1}^4 k_i^2+\left(\sum\limits_{i=1}^4 k_i\right)^2$$
which is impossible.
 
\end{example}

\begin{figure}[!h]
\begin{center}
\psfrag{a}{(a)}
\psfrag{b}{(b)}
\psfrag{0}{\scriptsize $v_0$}
\psfrag{1}{\scriptsize $v_1$}
\psfrag{2}{\scriptsize $v_2$}
\psfrag{3}{\scriptsize $v_3$}
\psfrag{4}{\scriptsize $v_4$}
\psfrag{5}{\scriptsize $v_5$}
\psfrag{*}{\scriptsize $u_*$}
\epsfig{file=./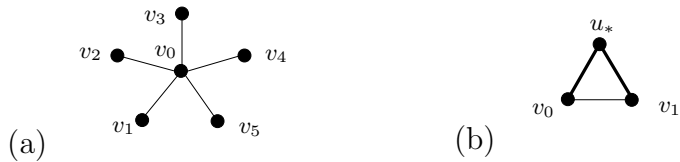,width=0.7\linewidth}
\caption{(a): Dynkin diagram for $G=[3^{1,1,1,1,1}]$;  (b) a unique possible embeddings of $H=(0,0,3)$.}
\label{star33333d}
\end{center}
\end{figure}

\medskip 
\noindent
Next example demonstrates this method in the non-simply-laced setting.
 
\begin{example} 
\label{ex3undef}
A group $H=(0,2,4)$ 
is not a subgroup of $G=[4,3^{1,1,1}]$  (see Fig.~\ref{star3334d}). 

For the group $G$ we take a root system $\Delta_G$  shown in Fig.~\ref{star3334d}(a) 
and fix a numeration of the vertices. Then we have two possibilities for the embedding of $H$ 
(see Fig.~\ref{star3334d}(b) and (c)). We consider the first of them, the second one is treated in the same way.
We have \\
\phantom{qwqwqwqwq}
$ (u_*,v_1)=-4=-2k_0+4k_1-2k_2-2k_3-2k_4,$\\ 
\phantom{qwqwqwqwq}
$ (u_*,v_4)=0=-2k_1+2k_4,$ \\
which implies that $$-4=(u_*,v_1+v_4)=2(-k_0+k_1-k_2-k_3),$$
so, $k_0+k_1+k_2+k_3$ (as well as $k_0+k_2+k_3+k_4$) is even.
On the other hand,
$$4=(u_*,u_*)=2k_4^2+ 4(k_0+k_1+k_2+k_3)^2-4k_1(k_0+k_2+k_3+k_4).$$
Dividing this equation by 2 and rewriting modulo 4, we obtain
$$2\equiv k_4^2 \mod 4,$$
which is impossible.

\end{example}

\begin{figure}[!h]
\begin{center}
\psfrag{a}{(a)}
\psfrag{b}{(b)}
\psfrag{c}{(c)}
\psfrag{v0}{\scriptsize $v_0$}
\psfrag{v1}{\scriptsize $v_1$}
\psfrag{v2}{\scriptsize $v_2$}
\psfrag{v3}{\scriptsize $v_3$}
\psfrag{v4}{\scriptsize $v_4$}
\psfrag{u}{\scriptsize $u_*$}
\epsfig{file=./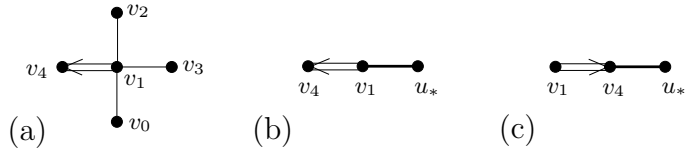,width=0.7\linewidth}
\caption{(a): Dynkin diagram for $G=[4,3^{1,1,1}]$;  (b),(c): two possible embeddings of $H=(0,2,4)$.}
\label{star3334d}
\end{center}
\end{figure}

\medskip

The method of indefinite coefficients can be applied for investigation of simplicial subgroup of a non-simplicial group $G$.

The results of the following example will be used in Section~\ref{section short-long}.
Here we use the method for the case when the fundamental domain of $G$ 
is a pyramid over a cube.

\begin{example} 
\label{f2}
The groups $G'={\mathcal F}_2$ and $G={\mathcal F}_3$ 
contain no subgroup $H=(0,0,0)$ (see Fig~\ref{but-num} for the notation).

We prove the statement for the group $G$, then by Proposition~\ref{properties}, the statement follows for $G'\subset G$.

Denote the vertices of $\Sigma(G)$ as on  Fig~\ref{but-num}. Let $H_1$ be any maximal parabolic subgroup of $H$.
Suppose that $H\subset G$.
By Lemma~\ref{rk2}, we may assume that $H_1$ is generated by reflections with respect to $v_1$ and $v_2$ 
(up to an $\rr$-isomorphism of $G$). Therefore, $H$ is embedded as shown on Fig~\ref{but-num} on the right, 
where $u_*=\sum\limits_{i=0}^{6}k_iv_i$.
The computation similar to one in Example~\ref{ideal tetrahedron} shows that such an embedding does not exist.

\end{example}

\begin{figure}[!h]
\begin{center}
\psfrag{a}{(a)}
\psfrag{b}{(b)}
\psfrag{c}{(c)}
\psfrag{d}{(d)}
\psfrag{0}{\scriptsize $v_0$}
\psfrag{1}{\scriptsize $v_1$}
\psfrag{2}{\scriptsize $v_2$}
\psfrag{3}{\scriptsize $v_3$}
\psfrag{4}{\scriptsize $v_4$}
\psfrag{5}{\scriptsize $v_5$}
\psfrag{6}{\scriptsize $v_6$}
\psfrag{7}{\scriptsize $v_7$}
\psfrag{8}{\scriptsize $v_8$}
\psfrag{u}{\scriptsize $u_*$}
\epsfig{file=./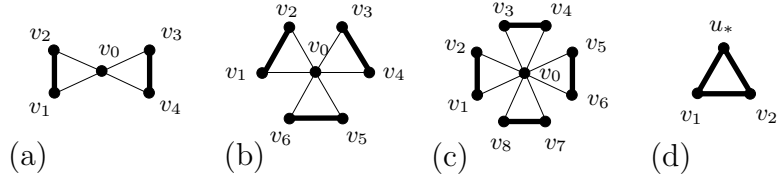,width=0.8\linewidth}
\caption{(a)-(c) Groups ${\mathcal F}_2$, ${\mathcal F}_3$ and ${\mathcal F}_4 $;  (d) non-existing embedding of $H=(0,0,0)$.}
\label{but-num}
\end{center}
\end{figure}

\begin{remark}
The group $G={\mathcal F}_4$ contains a subgroup  $H=(0,0,0)$.

Embeddings of the type discussed in Example~\ref{f2} still do not exist, however there is another embedding
 due to the fact that the fundamental chamber of $G$ has different types of ideal vertices:
one ideal vertex of ``cubical'' combinatorial type (with faces corresponding to $v_1,\dots,v_8$) and two simplicial ideal vertices 
of type $\widetilde D_4$ (corresponding to $v_0,v_1,v_3,v_5,v_7$ and $v_0,v_2,v_4,v_6,v_8$ respectively).
The embedding of $H_1$ into  any of subgroups of  type $\widetilde D_4$ can be easily lifted to embedding of $H\subset G$.

\end{remark}

\subsection{Short and long subgroups}
\label{section short-long}
A simple method described in this section allows to reduce the problem to consideration of pairs $(G,H)$ where both
$G$ and $H$ are simply-laced. 
The idea is to split the problem into two easier ones.
This method works efficiently for many pairs $(G,H)$. 

Suppose that  $G$ is a Weyl group of a non-simply-laced root system.
In most cases this implies that there are at least two possibilities for such a root system 
(we can reverse all arrows in Dynkin diagram). For each of the Weyl groups we fix one of the possible root systems, namely we take one of the  root systems $\Delta$ shown in Table~\ref{short-long}. Notice that each of them has roots of exactly two lengths.

We say that a reflection $r$ in $G$ is {\it short} if $r$ is a reflection with respect to a short root of $\Delta$.
Similarly, we define {\it long} reflections.
A subgroup $G_{s}\subset G$ generated by all short reflections is the {\it short subgroup} of $G$.  
Similarly, a {\it long subgroup} is the subgroup  $G_l\subset G$ generated by all long reflections.

\medskip

Suppose that $H$ is a simply-laced group. Then  $H\subset G$ if and only if $H\subset G_s$ or $H\subset G_l$.
If $H$ is not simply-laced, then $H\subset G$ implies 
$$\text{either } H_s\subset G_s \text{ and } H_l\subset G_l,  \text{\quad  or \quad} H_s\subset G_l \text{ and } H_l\subset G_s.$$ 
On the other hand, converse is not true: if the condition  above holds, we can say nothing about the existence of the subgroup $\rr$-isomorphic to $H$ in $G$.

\medskip

Now, we will study short and long subgroups of simplicial groups.
Denote by ${\mathcal D}$ the Dynkin diagram of $\Delta$. Let ${\mathcal D}_{s}$ be the subdiagram of ${\mathcal D}$ spanned by the nodes corresponding to
short roots, and $\mathcal D_l$ be the subdiagram of $\mathcal D$  spanned by the long roots.

\begin{lemma}
\label{sh-l}
The group $G_{s}$ is a finite index subgroup of a simplicial group $G$ unless $\mathcal D_{l}$ is an affine diagram.
Similarly, $G_l$ is a finite index subgroup of $G$ unless $\mathcal D_{s}$ is an affine diagram.

\end{lemma}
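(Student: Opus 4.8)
The plan is to analyze the reflection group $G_s$ directly through its action on the root lattice, exploiting the fact that $G$ is a simplicial (Coxeter) group acting on $\H^n$ with a simplex as fundamental chamber. First I would set up the geometry: let $v_0,\dots,v_n$ be the normalized simple roots of $\Delta$ (with the short ones of square length $2$), and recall that $G=\langle r_{v_0},\dots,r_{v_n}\rangle$ acts cocompactly-or-cofinitely on $\H^n$. The key observation is that $G_s$ is itself a reflection group (generated by the reflections in all $G$-images of short simple roots), so by Vinberg's theory it has a convex fundamental polyhedron $P_s\supset F$ built from mirrors orthogonal to short roots; $G_s$ has finite index in $G$ precisely when $P_s$ has finite volume, equivalently when $P_s$ is not ``too large,'' i.e. when it does not degenerate by acquiring an ideal vertex of infinite type or an unbounded direction of the wrong kind.

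The main step is to identify exactly when this degeneration happens in terms of the Dynkin diagram. I would argue as follows. Consider the subgroup $G_s$ restricted near a vertex or a boundary point. The facets of $P_s$ that meet $F$ correspond to the short simple roots; the combinatorics of how the long simple roots get ``absorbed'' is controlled by the subdiagram $\mathcal D_s$ together with the edges connecting $\mathcal D_s$ to $\mathcal D_l$. The crucial point is that the long subgroup $G_l$ is the full stabilizer in $G$ of the subspace fixed by $G_s$-complementary data — more precisely, I want to show $[G:G_s]<\infty$ iff $G_l$ is a \emph{finite} group, and then observe that $G_l$ is finite iff the diagram $\mathcal D_l$ spanning the long roots is of finite type, i.e. \emph{not} affine. (If $\mathcal D_l$ is affine, $G_l$ is infinite; being a parabolic-type subgroup whose mirrors are all parallel to a common direction, it forces $P_s$ to have an ideal vertex, hence $G_s$ has infinite covolume.) The symmetric statement for $G_l$ follows by swapping the roles of short and long.

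To make the ``iff'' precise I would use Vinberg's criterion for finite covolume of a Coxeter polyhedron together with the fact (used repeatedly above, cf. Remark~\ref{rem-geom sense} and Lemma~\ref{simplicial}) that for a simplicial group the stabilizer of a face is governed by the corresponding subdiagram: a subdiagram is of finite type iff the face is a genuine (non-ideal) face of the simplex, and affine type corresponds to an ideal vertex. Concretely: the short subgroup $G_s$ fails to have finite index exactly when the ``long direction'' it ignores spans an affine (infinite-volume-producing) configuration, which is the condition that $\mathcal D_l$ is affine. One can also verify this directly and uniformly from Table~\ref{short-long}, since the list of non-simply-laced hyperbolic simplicial $\Delta$ is finite: for each such $\Delta$ one reads off $\mathcal D_s$ and $\mathcal D_l$, computes $G_s$ (and $G_l$) as a concrete reflection group, and checks the index — this case check serves as a complete verification even if one does not want to invoke the general covolume criterion.

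I expect the main obstacle to be the ``only if'' direction done cleanly in general — i.e. showing that $\mathcal D_l$ affine really does force infinite index rather than just making the naive argument fail. The safe route, given that only finitely many non-simply-laced hyperbolic simplicial groups exist, is to combine a conceptual argument for the ``if'' direction (finite type $\mathcal D_l$ $\Rightarrow$ $G_l$ finite $\Rightarrow$ $[G:G_s]<\infty$, via the fact that $G/G_s$ is essentially controlled by $G_l$) with an explicit inspection of Table~\ref{short-long} for the remaining direction, noting in each affine case the ideal vertex of $P_s$ that witnesses infinite covolume.
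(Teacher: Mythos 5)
There is a genuine gap, and its core is the claimed equivalence ``$[G:G_s]<\infty$ iff $G_l$ is a finite group.'' As the paper defines it, $G_l$ is the subgroup generated by \emph{all} long reflections of $G$, not just those in the simple long roots; it is an infinite group in essentially every case (in Table~\ref{short-long} it has finite index in the infinite group $G$ in most rows), while $[G:G_s]$ is nevertheless finite in those same rows. The group whose finiteness is actually governed by $\mathcal D_l$ being of finite (non-affine) type is the \emph{standard parabolic} subgroup $G_1$ generated by the reflections in the long \emph{simple} roots, and your argument never cleanly separates these two objects. Beyond this, the proposal does not supply the mechanism that makes the finite-index conclusion work. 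The paper's proof is constructive: since $\mathcal D_l$ is not affine, $G_1$ is finite and fixes pointwise a genuine (non-ideal) subspace $L$ meeting a face of the fundamental simplex $F$; one forms $P=\bigcup_{g\in G_1}gF$ and checks that $P$ is a Coxeter polytope --- the only new dihedral angles are doubled short--long angles, and $2\cdot\frac{\pi}{4}=\frac{\pi}{2}$, $2\cdot\frac{\pi}{6}=\frac{\pi}{3}$ are still of the form $\pi/m$ --- and then observes that every mirror of $G$ meeting the interior of $P$ is long, so $P$ is a fundamental chamber for $G_s$ and $[G:G_s]$ equals the (finite) order of the stabilizer of $L$. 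Your appeal to ``Vinberg's criterion for finite covolume'' gestures at the right circle of ideas but never produces a fundamental domain for $G_s$ or bounds its volume.

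Two smaller points. First, falling back on an entry-by-entry inspection of Table~\ref{short-long} is not a proof of the lemma: the table is the \emph{output} of this analysis (it lists $G_s$, $G_l$ and the indices), so using it as verification is circular unless each entry is independently justified, which is exactly what the lemma's proof is for. Second, you spend effort worrying about the ``only if'' direction (affine $\mathcal D_l$ forces infinite index), but the lemma as stated only asserts finite index \emph{when} $\mathcal D_l$ is not affine, and the paper accordingly proves only that implication; the converse is not needed.
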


\begin{proof}
We prove the first statement of the lemma. Since the proof is invariant under reversing of all arrows in $D$,
the second one follows by symmetry.

Let $G_1$ be a parabolic subgroup of $G$ generated by the reflections with respect to simple long roots.
Denote by $L$ the intersection of all the mirrors of reflections of $G_1$.
Since $\mathcal D_{l}$ is not an affine diagram, $G_1$ is finite, so $L$ is not an ideal point, denote $\dim L = k$.
Let $F$ be a fundamental chamber of $G$ having a $k$-dimensional face in $L$.
Consider the union of the fundamental chambers of $G$ which are in the $G_1$-orbit of $F$, denote it by $P$: 
$$P=\bigcup\limits_{g\in G_1} gF.$$

Then $P$ is a Coxeter polytope. Indeed, each dihedral angle of $P$ is either a dihedral angle of some of $gF$ or 
a dihedral angle of a union of two images $g_1F$ and $g_2F$ of $F$. The latter is possible only if
 $g_1F$ and $g_2F$ are attached to each other by a facet corresponding to a long root, while the facets of the angle correspond 
to short roots.
Therefore, this dihedral angle is equal to a doubled angle of $F$ which is either $\pi/4$ or $\pi/6$ (as an angle between roots 
of different lengths).  
So, each angle of $P$ is of form $\pi/m$ where $m\in \Z$, and $P$ is a Coxeter polytope.

Clearly, the reflection group $G_P$ generated by reflections in facets of $P$ is a subgroup of $G_{s}$. 
Furthermore, every mirror of $G$ intersecting the interior of $P$ is a mirror of a long reflection.
Thus, $G_P$ coincides with $G_{s}$, and $[G:G_{s}]$ is the order of the stabilizer of $L$ in $G$.

\end{proof}

In Table~\ref{short-long} we list short and long subgroups of arithmetic over $\Q$ simplicial groups.

%\pagebreak 
%\clearpage

%\begin{table}[!h]
\begin{table}[!h]
\begin{center}
\caption{Short and long subgroups. By $|G|$ we mean the order of a group $G$.} 
\label{short-long}
\begin{tabular}{|c|c|c c|c c|}
\hline
& $\Delta$ & $G_{s}$ & $[G:G_{s}]$& $G_l$ & $[G:G_{l}]$\\
\hline
1&\epsfig{file=./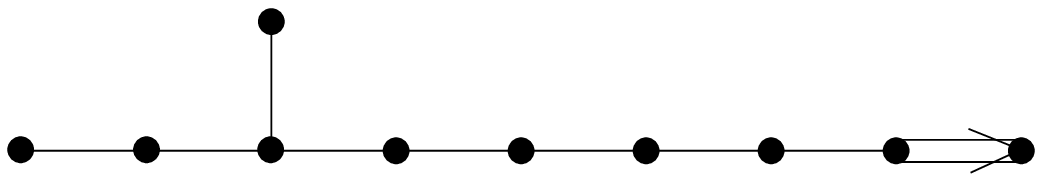,width=0.2\linewidth}
& 
\begin{tabular}{c}
right-angled\\
$\infty$-cell\\
\end{tabular}
& $\infty$ &
\epsfig{file=./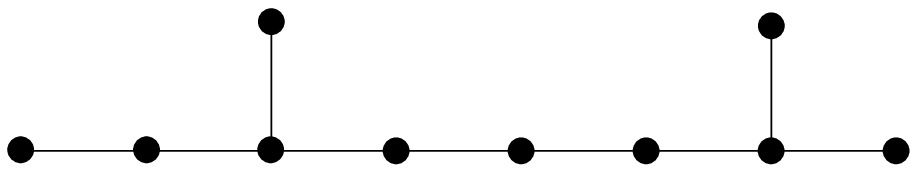,width=0.18\linewidth}
& $2$\\

\hline
2& \epsfig{file=./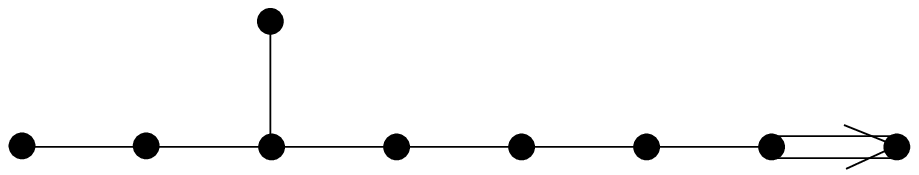,width=0.2\linewidth}
& 
\begin{tabular}{c}
right-angled\\
$240$-cell\\
\end{tabular}
& $|E_8|$ &
\epsfig{file=./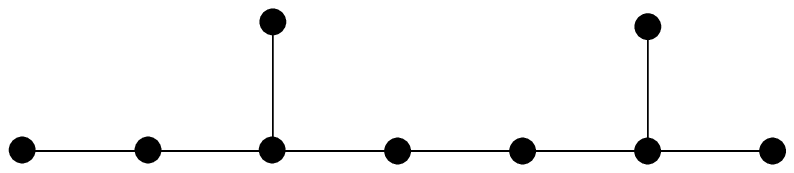,width=0.18\linewidth}
& $2$\\

\hline
3&\epsfig{file=./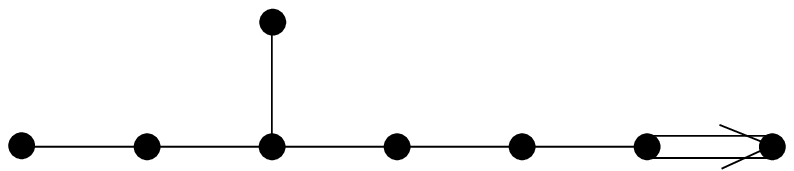,width=0.2\linewidth}
& 
\begin{tabular}{c}
right-angled\\
$56$-cell\\
\end{tabular}
& $|E_7|$ &
\epsfig{file=./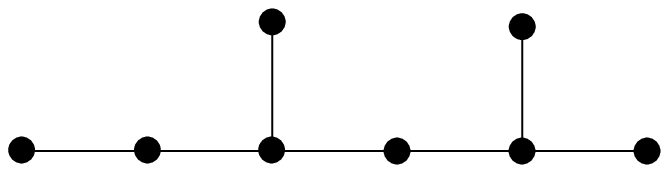,width=0.18\linewidth}
& $2$\\

\hline
4&\epsfig{file=./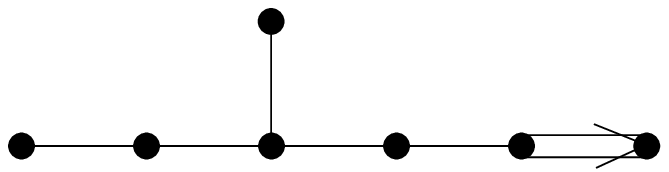,width=0.2\linewidth}
& 
\begin{tabular}{c}
right-angled\\
$27$-cell\\
\end{tabular}
& $|E_6|$ &
\epsfig{file=./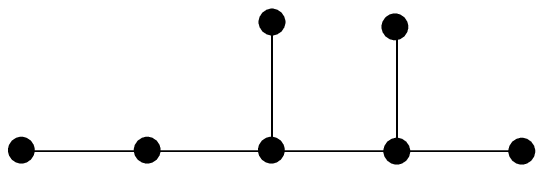,width=0.18\linewidth}
& $2$\\

\hline
5&\begin{tabular}{c}\epsfig{file=./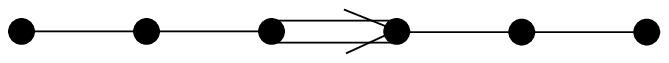,width=0.2\linewidth}\end{tabular}
& \begin{tabular}{c}\epsfig{file=./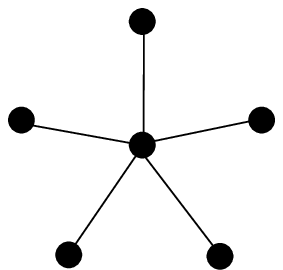,width=0.18\linewidth}\end{tabular}
& $24$ &
\begin{tabular}{c}\epsfig{file=./pic/star1.eps,width=0.18\linewidth}\end{tabular}
& $24$\\

\hline
6& \begin{tabular}{c}\epsfig{file=./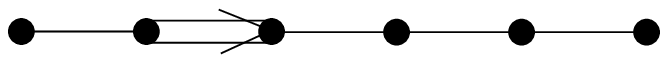,width=0.2\linewidth}\end{tabular}
& \begin{tabular}{c}\epsfig{file=./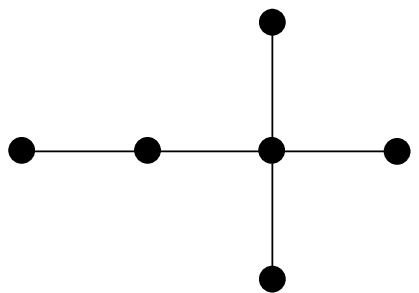,width=0.18\linewidth}\end{tabular}
& $6$ &
\begin{tabular}{c}\epsfig{file=./pic/star1.eps,width=0.18\linewidth}\end{tabular}
& $120$\\

\hline
7& \begin{tabular}{c}\raisebox{-0pt}[20pt][0pt]{\epsfig{file=./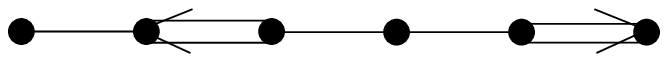,width=0.2\linewidth}}\end{tabular}
& \begin{tabular}{c}\raisebox{-0pt}[20pt][0pt]{\epsfig{file=./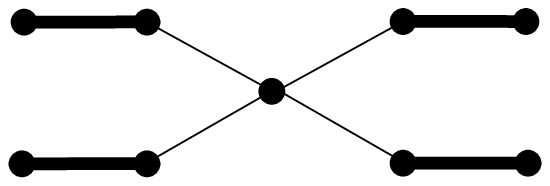,width=0.18\linewidth}}\end{tabular}
& $24$ &
\begin{tabular}{c}\raisebox{-0pt}[20pt][0pt]{\epsfig{file=./pic/star1.eps,width=0.18\linewidth}}\end{tabular}
& $12$\\

\hline
8& \begin{tabular}{c}\epsfig{file=./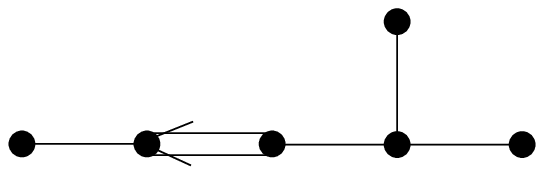,width=0.2\linewidth}\end{tabular}
&  \begin{tabular}{c}\epsfig{file=./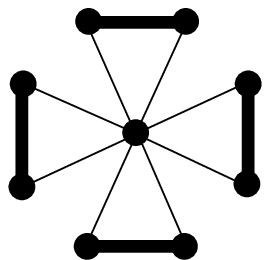,width=0.18\linewidth}\end{tabular}
& $192$ &
\begin{tabular}{c}\epsfig{file=./pic/star1.eps,width=0.18\linewidth}\end{tabular}
& $6$\\

\hline
9&\begin{tabular}{c}\epsfig{file=./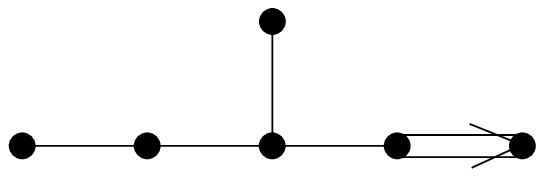,width=0.2\linewidth}\end{tabular}
& 
\begin{tabular}{c}
right-angled \\
16-cell
\end{tabular}
& $1920$ &
\begin{tabular}{c}\epsfig{file=./pic/star_.eps,width=0.18\linewidth}\end{tabular}
& $2$\\

\hline
10& \begin{tabular}{c}\epsfig{file=./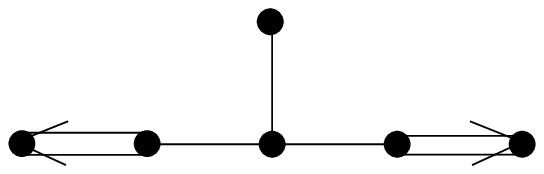,width=0.2\linewidth}\end{tabular}
& 
\begin{tabular}{c}
right-angled \\
16-cell
\end{tabular}
& $192$ &
\begin{tabular}{c}\epsfig{file=./pic/star1.eps,width=0.18\linewidth}\end{tabular}
& $4$\\

\hline
11&\begin{tabular}{c}\raisebox{-0pt}[25pt][0pt]{\epsfig{file=./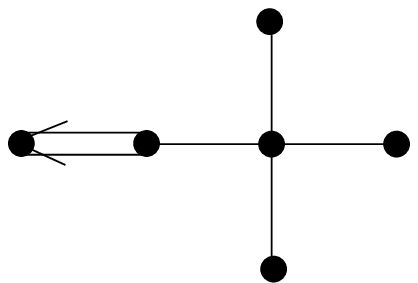,width=0.2\linewidth}}\end{tabular}
& 
\begin{tabular}{c}
right-angled\\
$\infty$-cell\\
\end{tabular}
& $\infty$ & \begin{tabular}{c}\epsfig{file=./pic/star1.eps,width=0.18\linewidth}\end{tabular}
& $2$\\

\hline
12& \begin{tabular}{c}\raisebox{-0pt}[20pt][0pt]{\epsfig{file=./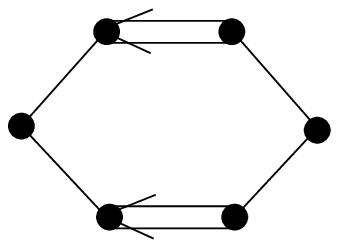,width=0.2\linewidth}}\end{tabular}
& \begin{tabular}{c}\raisebox{-0pt}[20pt][0pt]{\epsfig{file=./pic/butterfly4_.eps,width=0.18\linewidth}}\end{tabular}
& $24$ &
\begin{tabular}{c}\raisebox{-0pt}[20pt][0pt]{\epsfig{file=./pic/butterfly4_.eps,width=0.18\linewidth}}\end{tabular}
& $24$\\

\hline
13&\begin{tabular}{c}\raisebox{-0pt}[20pt][0pt]{\epsfig{file=./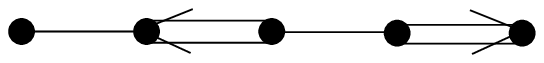,width=0.2\linewidth}}\end{tabular}
& \begin{tabular}{c}\raisebox{-0pt}[20pt][0pt]{\epsfig{file=./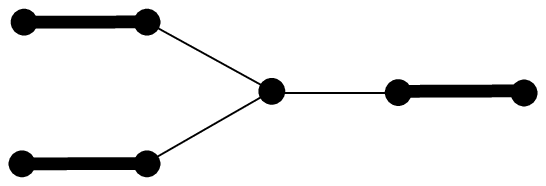,width=0.18\linewidth}}\end{tabular}
& $6$ &
\begin{tabular}{c}\raisebox{-0pt}[20pt][0pt]{\epsfig{file=./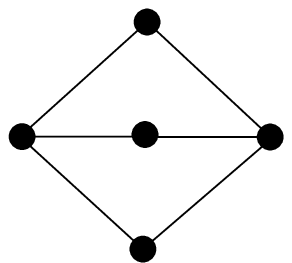,width=0.18\linewidth}}\end{tabular}
& $12$\\

\hline
14&\begin{tabular}{c}\raisebox{-0pt}[20pt][0pt]{\epsfig{file=./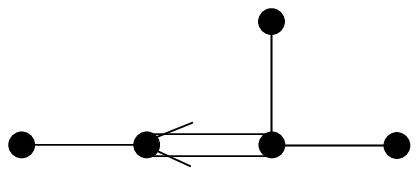,width=0.2\linewidth}}\end{tabular}
& \begin{tabular}{c}\raisebox{-0pt}[20pt][0pt]{\epsfig{file=./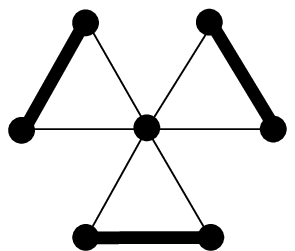,width=0.18\linewidth}}\end{tabular}
& $24$ &
\begin{tabular}{c}\raisebox{-0pt}[20pt][0pt]{\epsfig{file=./pic/l13.eps,width=0.18\linewidth}}\end{tabular}
& $6$\\

\hline
15& \begin{tabular}{c}\epsfig{file=./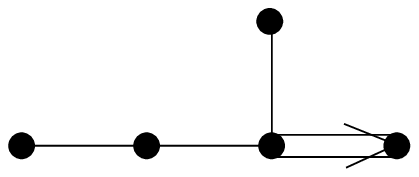,width=0.2\linewidth}\end{tabular}
& %\epsfig{file=./pic/10cell.eps,width=0.2\linewidth}
\begin{tabular}{c}
right-angled \\
10-cell
\end{tabular}
& $120$ &
\begin{tabular}{c}\epsfig{file=./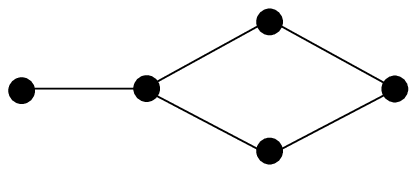,width=0.18\linewidth}\end{tabular}
& $2$\\

\hline
16&\begin{tabular}{c}\epsfig{file=./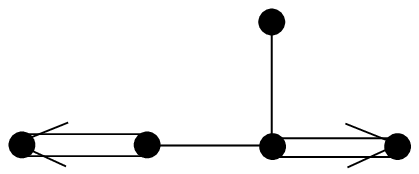,width=0.2\linewidth}\end{tabular}
& %\epsfig{file=./pic/10cell.eps,width=0.2\linewidth}
\begin{tabular}{c}
right-angled \\
10-cell
\end{tabular}
& $24$ &
\begin{tabular}{c}\epsfig{file=./pic/l13.eps,width=0.18\linewidth}\end{tabular}
& $4$\\

\hline
17&\begin{tabular}{c}\epsfig{file=./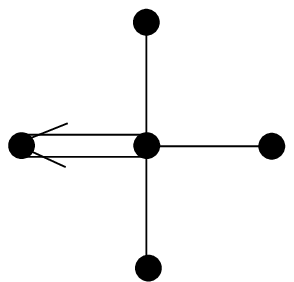,width=0.2\linewidth}\end{tabular}
& %\epsfig{file=./pic/24cell.eps,width=0.2\linewidth}
\begin{tabular}{c}
right-angled \\
24-cell
\end{tabular}
& $192$ &
\begin{tabular}{c}\epsfig{file=./pic/l13.eps,width=0.18\linewidth}\end{tabular}
& $2$\\

\hline
18&\begin{tabular}{c}\raisebox{-0pt}[20pt][0pt]{\epsfig{file=./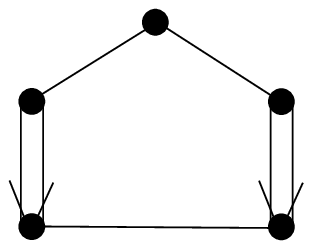,width=0.2\linewidth}}\end{tabular}
& \begin{tabular}{c}\raisebox{-0pt}[20pt][0pt]{\epsfig{file=./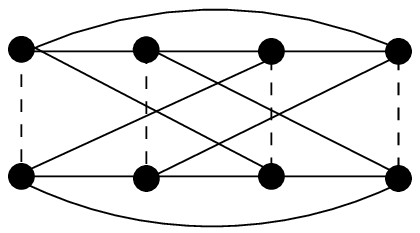,width=0.18\linewidth}}\end{tabular}
& $24$ &
\begin{tabular}{c}\raisebox{-0pt}[20pt][0pt]{\epsfig{file=./pic/butterfly3_.eps,width=0.18\linewidth}}\end{tabular}
& $6$\\

\hline
19&\begin{tabular}{c}\epsfig{file=./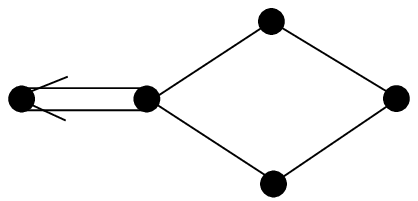,width=0.2\linewidth}\end{tabular}
& 
\begin{tabular}{c}
right-angled\\
$\infty$-cell\\
\end{tabular}
& $\infty$ &
\begin{tabular}{c}\epsfig{file=./pic/l13.eps,width=0.18\linewidth}\end{tabular}
& $2$\\

\hline
\end{tabular}
\end{center}
\end{table}

\pagebreak
\clearpage
\addtocounter{table}{-1}

\begin{table}[!h]
\begin{center}
\caption{ Cont.}
\label{short-long2}
\begin{tabular}{|c|c|c c|c c|}
\hline
& $\Delta$ & $G_{s}$ & $[G:G_{s}]$& $G_l$ & $[G:G_{l}]$\\
\hline
20&\begin{tabular}{c}\epsfig{file=./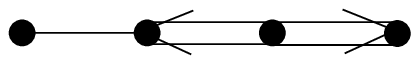,width=0.12\linewidth}\end{tabular}
& \begin{tabular}{c}\epsfig{file=./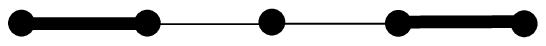,width=0.14\linewidth}\end{tabular}
& $2$ &
\begin{tabular}{c}\raisebox{-0pt}[25pt][0pt]{\epsfig{file=./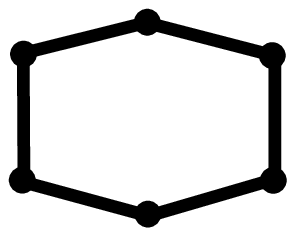,width=0.14\linewidth}}\end{tabular}
& $12$\\

\hline
21&\begin{tabular}{c}\epsfig{file=./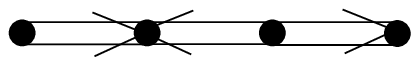,width=0.12\linewidth}\end{tabular}
& \begin{tabular}{c}\raisebox{-0pt}[25pt][0pt]{\epsfig{file=./pic/bi.eps,width=0.14\linewidth}}\end{tabular}
& $4$ &
\begin{tabular}{c}\raisebox{-0pt}[25pt][0pt]{\epsfig{file=./pic/bi.eps,width=0.14\linewidth}}\end{tabular}
& $4$\\

\hline
22&\begin{tabular}{c}\raisebox{-0pt}[20pt][0pt]{\epsfig{file=./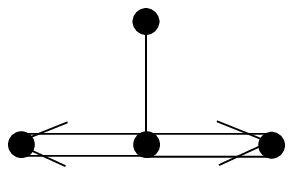,width=0.12\linewidth}}\end{tabular}
& \begin{tabular}{c}\raisebox{-0pt}[25pt][0pt]{\epsfig{file=./pic/bi.eps,width=0.14\linewidth}}\end{tabular}
& $6$ &
\begin{tabular}{c}\raisebox{-0pt}[16pt][0pt]{\epsfig{file=./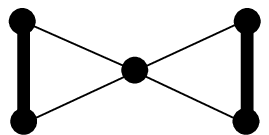,width=0.14\linewidth}}\end{tabular}
& $4$\\

\hline
23&\begin{tabular}{c}\raisebox{-0pt}[20pt][0pt]{\epsfig{file=./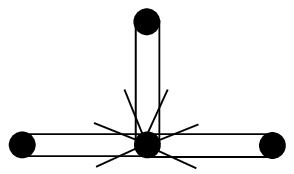,width=0.12\linewidth}}\end{tabular}
& 
\begin{tabular}{c}
right-angled \\
octahedron
\end{tabular}
%\begin{tabular}{c}\raisebox{-0pt}[20pt][0pt]{\epsfig{file=./pic/oct.eps,width=0.2\linewidth}}\end{tabular}
& $8$ &
\begin{tabular}{c}\raisebox{-0pt}[25pt][0pt]{\epsfig{file=./pic/bi.eps,width=0.14\linewidth}}\end{tabular}
& $2$\\

\hline
24&\begin{tabular}{c}\raisebox{-0pt}[20pt][0pt]{\epsfig{file=./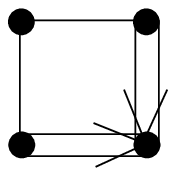,width=0.12\linewidth}}\end{tabular}
& 
\begin{tabular}{c}
right-angled \\
12-cell
\end{tabular}
%\begin{tabular}{c}\raisebox{-0pt}[20pt][0pt]{\epsfig{file=./pic/oct.eps,width=0.2\linewidth}}\end{tabular}
& $24$ &
\begin{tabular}{c}\raisebox{-0pt}[16pt][0pt]{\epsfig{file=./pic/butterfly2_.eps,width=0.14\linewidth}}\end{tabular}
& $2$\\

\hline
25&\begin{tabular}{c}\raisebox{-0pt}[20pt][0pt]{\epsfig{file=./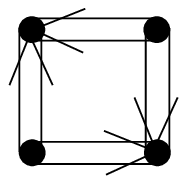,width=0.12\linewidth}}\end{tabular}
& 
\begin{tabular}{c}
right-angled \\
octahedron
\end{tabular}
%\begin{tabular}{c}\raisebox{-0pt}[20pt][0pt]{\epsfig{file=./pic/oct.eps,width=0.2\linewidth}}\end{tabular}
& $4$ &
\begin{tabular}{c}
right-angled \\
octahedron
\end{tabular}
%\begin{tabular}{c}\raisebox{-0pt}[20pt][0pt]{\epsfig{file=./pic/l13.eps,width=0.2\linewidth}}\end{tabular}
& $4$\\

\hline
26&\begin{tabular}{c}\raisebox{-0pt}[20pt][0pt]{\epsfig{file=./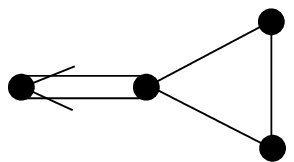,width=0.12\linewidth}}\end{tabular}
& 
\begin{tabular}{c}
right-angled\\
$\infty$-cell\\
\end{tabular}
 %\begin{tabular}{c}\raisebox{-0pt}[20pt][0pt]{\epsfig{file=./pic/fly2.eps,width=0.2\linewidth}}\end{tabular}
& $\infty$ &
\begin{tabular}{c}\raisebox{-0pt}[20pt][0pt]{\epsfig{file=./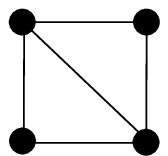,width=0.2\linewidth}}\end{tabular}
& $2$\\

\hline
27&\begin{tabular}{c}\raisebox{-0pt}[20pt][0pt]{\epsfig{file=./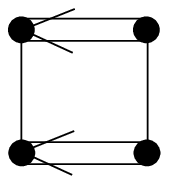,width=0.12\linewidth}}\end{tabular}
& \begin{tabular}{c}\raisebox{-0pt}[20pt][0pt]{\epsfig{file=./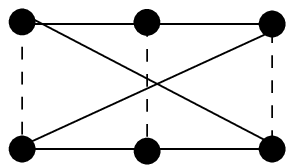,width=0.14\linewidth}}\end{tabular}
& $6$ &
\begin{tabular}{c}\raisebox{-0pt}[20pt][0pt]{\epsfig{file=./pic/cube3.eps,width=0.14\linewidth}}\end{tabular}
& $6$\\

\hline
28&\begin{tabular}{c}\epsfig{file=./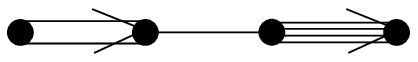,width=0.12\linewidth}\end{tabular}
& 
\begin{tabular}{c}
regular cube, \\
angles $=\pi/3$
\end{tabular}
%\begin{tabular}{c}\raisebox{-0pt}[20pt][0pt]{\epsfig{file=./pic/fly2.eps,width=0.2\linewidth}}\end{tabular}
& $48$ &
%\begin{tabular}{c}\raisebox{-0pt}[20pt][0pt]{\epsfig{file=./pic/semiid3.eps,width=0.14\linewidth}}\end{tabular}
 \begin{tabular}{c}
right-angled\\
$\infty$-cell\\
\end{tabular}
 & $\infty$\\
                                                   
\hline
29&\begin{tabular}{c}\raisebox{-0pt}[20pt][0pt]{\epsfig{file=./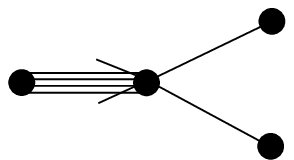,width=0.12\linewidth}}\end{tabular}
& \begin{tabular}{c}\raisebox{-0pt}[20pt][0pt]{\epsfig{file=./pic/semiid3_.eps,width=0.14\linewidth}}\end{tabular}
& $2$ &
%\begin{tabular}{c}\raisebox{-0pt}[20pt][0pt]{\epsfig{file=./pic/cube3.eps,width=0.2\linewidth}}\end{tabular}
\begin{tabular}{c}
regular cube, \\
angles $=\pi/3$
\end{tabular}
& $24$\\
                                                                                                                        
\hline
30&\begin{tabular}{c}\raisebox{-0pt}[20pt][0pt]{\epsfig{file=./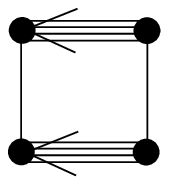,width=0.12\linewidth}}\end{tabular}
& 
\begin{tabular}{c}
regular cube, \\
angles $=\pi/3$
\end{tabular}
%\begin{tabular}{c}\raisebox{-0pt}[20pt][0pt]{\epsfig{file=./pic/fly2.eps,width=0.2\linewidth}}\end{tabular}
& $6$ &
%\begin{tabular}{c}\raisebox{-0pt}[20pt][0pt]{\epsfig{file=./pic/semiid3.eps,width=0.2\linewidth}}\end{tabular}
\begin{tabular}{c}
regular cube, \\
angles $=\pi/3$
\end{tabular}
& $6$\\
                                                                                                 
\hline
31&\begin{tabular}{c}\epsfig{file=./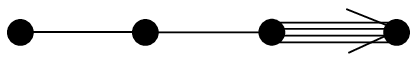,width=0.12\linewidth}\end{tabular}
& \begin{tabular}{c}\raisebox{-0pt}[20pt][0pt]{\epsfig{file=./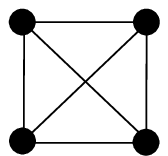,width=0.14\linewidth}}\end{tabular}
& $24$ &
\begin{tabular}{c}\raisebox{-0pt}[20pt][0pt]{\epsfig{file=./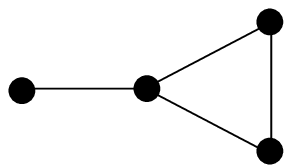,width=0.14\linewidth}}\end{tabular}
& $2$\\
                                              
\hline
32&\begin{tabular}{c}\epsfig{file=./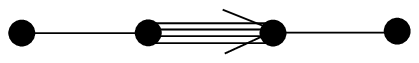,width=0.12\linewidth}\end{tabular}
& \begin{tabular}{c}\raisebox{-0pt}[20pt][0pt]{\epsfig{file=./pic/ideal3_.eps,width=0.14\linewidth}}\end{tabular}
& $6$ &
\begin{tabular}{c}\raisebox{-0pt}[20pt][0pt]{\epsfig{file=./pic/ideal3_.eps,width=0.14\linewidth}}\end{tabular}
& $6$\\
\hline

33&\begin{tabular}{c}\raisebox{-0pt}[20pt][0pt]{\epsfig{file=./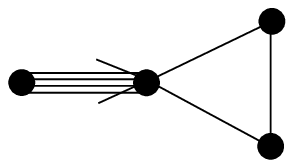,width=0.12\linewidth}}\end{tabular}
& \begin{tabular}{c}\raisebox{-0pt}[20pt][0pt]{\epsfig{file=./pic/ideal3_.eps,width=0.14\linewidth}}\end{tabular}
& $2$ &
%\begin{tabular}{c}\raisebox{-0pt}[20pt][0pt]{\epsfig{file=./pic/ideal3_.eps,width=0.2\linewidth}}\end{tabular}
 \begin{tabular}{c}
right-angled\\
$\infty$-cell\\
\end{tabular}
 & $\infty$\\

\hline
34&\begin{tabular}{c}\epsfig{file=./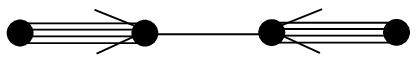,width=0.12\linewidth}\end{tabular}
& \begin{tabular}{c}\raisebox{-0pt}[20pt][0pt]{\epsfig{file=./pic/ideal3_.eps,width=0.14\linewidth}}\end{tabular}
& $4$ &
\begin{tabular}{c}\raisebox{-0pt}[20pt][0pt]{\epsfig{file=./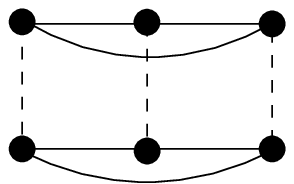,width=0.14\linewidth}}\end{tabular}
& $6$\\

\hline
35&\begin{tabular}{c}\raisebox{-0pt}[20pt][0pt]{\epsfig{file=./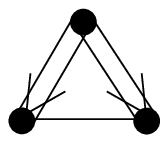,width=0.12\linewidth}}\end{tabular}
& \begin{tabular}{c}\raisebox{-0pt}[20pt][0pt]{\epsfig{file=./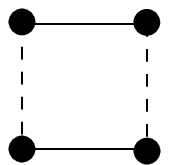,width=0.14\linewidth}}\end{tabular}
& $2$ &
%\begin{tabular}{c}\raisebox{-0pt}[20pt][0pt]{\epsfig{file=./,width=0.2\linewidth}}\end{tabular} 
 \begin{tabular}{c}
right-angled \\
hexagon\\
\end{tabular}
&
$6$\\

\hline
36&\begin{tabular}{c}\raisebox{-0pt}[20pt][0pt]{\epsfig{file=./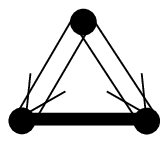,width=0.12\linewidth}}\end{tabular}
& \begin{tabular}{c}\raisebox{-0pt}[20pt][0pt]{\epsfig{file=./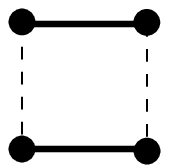,width=0.14\linewidth}}\end{tabular}
& $2$ &
%\begin{tabular}{c}\raisebox{-0pt}[20pt][0pt]{\epsfig{file=./pic/cube3_.eps,width=0.2\linewidth}}\end{tabular}
 \begin{tabular}{c}
right-angled\\
$\infty$-cell\\
\end{tabular}
 & $\infty$\\

\hline
37&\begin{tabular}{c}\raisebox{-0pt}[20pt][0pt]{\epsfig{file=./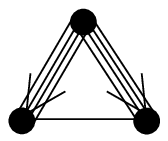,width=0.12\linewidth}}\end{tabular}
& \begin{tabular}{c}\raisebox{-0pt}[20pt][0pt]{\epsfig{file=./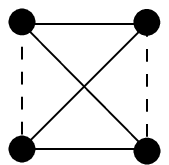,width=0.14\linewidth}}\end{tabular}
& $4$ &
%\begin{tabular}{c}\raisebox{-0pt}[20pt][0pt]{\epsfig{file=./pic/.eps,width=0.2\linewidth}}\end{tabular}
 \begin{tabular}{c}
hexagon, \\
angles $=\pi/3$
\end{tabular}
& $6$\\

\hline
38&\begin{tabular}{c}\raisebox{-0pt}[20pt][0pt]{\epsfig{file=./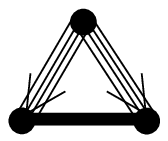,width=0.12\linewidth}}\end{tabular}
& \begin{tabular}{c}\raisebox{-0pt}[20pt][0pt]{\epsfig{file=./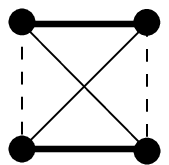,width=0.14\linewidth}}\end{tabular}
& $2$ &
%\begin{tabular}{c}\raisebox{-0pt}[20pt][0pt]{\epsfig{file=./pic/cube3_.eps,width=0.2\linewidth}}\end{tabular}
 \begin{tabular}{c}
right-angled\\
$\infty$-cell\\
\end{tabular}
 & $\infty$\\

\hline
\end{tabular}
\end{center}
\end{table}

\clearpage

%\end{table}

\begin{example}
The group $G=(3,4,4)$ 
contains no subgroup $H=(0,0,0)$. 

It is sufficient to prove that $H$ is not a subgroup of the larger group $G'=[(3^2,4^2)]$ (see Fig.~\ref{but-ex} for the notation),
where $G\subset G'$ is a visual embedding described in Section~\ref{visual}.
Two roots corresponding to parallel mirrors either are of the same length, or one of them is twice larger than another.
Since $G'$ contains reflections of two lengths only, we conclude that all reflections of $H$ are of the same length.
Therefore, if $H$ can be embedded to $G'$, it can be embedded either into $G'_{s}$ or into $G'_{l}$.
 
The group $G'$ corresponds to two root systems, we choose one whose Dynkin diagram is shown in row 24 of Table~\ref{short-long}.
Then $G'_{s}$ is a group generated by reflections of a compact 3-polytope (see Table~\ref{short-long}),
so it can not contain $H$ (see part 1 of Proposition~\ref{properties}).
As it is shown in Example~\ref{f2}, 
%of Section~\ref{large balls},
$G'_l$  does not contain $H$ either. 
Hence, $H$ is not a subgroup of $G'$. 

\end{example}

%\begin{remark} While talking about subgroups of $G$, we need to consider all possible lengths of the roots in the root system 
%corresponding to $H$, so the short and longs subgroups shown in  Table~\ref{short-long}
%may substitute each other.
%
%
%\end{remark}

\section{Algorithm}
\label{algorithm}

Given two simplicial groups $G$ and $H$ ($\rank H<\rank G$) we need to determine if $H$ can be embedded into $G$ as a maximal simplicial subgroup.
In this section we develop an algorithm which either finds an embedding or proves that one does not exist.
The algorithm works unless $H$ is {\rr}-isomorphic to a strictly simplicial group whose fundamental chamber is an ideal simplex. 
In other words, the algorithm works if at least one maximal parabolic subgroup of $H$ is finite.
 
Throughout this section we suppose that at least one of the maximal parabolic subgroups of $H$ is finite.
Denote by $H_1$ this finite maximal parabolic subgroup.
Since we are looking for maximal subgroups,   Lemma~\ref{simplicial} implies that $H_1$ is also a parabolic subgroup of $G$.

The same algorithm works well for the case $\rank H=\rank G$
(but it requires some minor changes for the case when $H_1$ is not a parabolic subgroup of $G$).

\begin{remark}
The algorithm produces the list of embedding containing all maximal ones. 
%(of non-ideal simplicial groups into simplicial groups). 
However, some of the embeddings produced by the algorithm might not be maximal. 

\end{remark}

\bigskip
\bigskip
\pagebreak
\subsection{Notation}

\begin{center}
\begin{tabular}{ll}
$\rho(A,B)$  & distance between the points $A$ and $B$;\\
$\rho(A,\Pi)$ &distance between a point $A$ and a hyperplane $\Pi$;\\
$G$ &finite covolume simplicial group in $\H^n$;\\
$H\subset G$ &simplicial subgroup;\\
$r_0,r_1,\dots,r_n$ & standard generating reflections of $G$;\\
$m_0,m_1,\dots,m_n$ & mirrors (fixed planes) of $r_0,r_1,\dots,r_n$ respectively;\\
$H_1\subset H$ & finite maximal parabolic subgroup of $H$;  \\
$H_1=\langle r_1,\dots, r_k\rangle$ & reflections generating $H_1$, $k\le n$;\\
$H=\langle r_*,H_1\rangle$ %=\langle r_*,r_1,\dots, r_k\rangle$
&  reflections generating $H$;\\
$m_*$ & mirror of reflection $r_*$;\\
$L=\bigcap\limits_{1\le i\le k} m_i$ & a subspace of $\H^n$ stabilized pointwise by $H_1$ \\
&(a point if $\rank H=\rank G$);\\
$\Pi $ & smallest $H$-invariant plane ($\dim \Pi=k$); \\
$h$ & the common perpendicular of $L$ and $m_*$\\ & (or a perpendicular from $L$ to $m_*$ if $L$ is a point);\\
$d=\rho(L,m_*)$ & distance between $L$ and $m_*$.

\end{tabular} 
\end{center}

\subsection{Example: case of equal ranks}
\label{equal ranks}
 Let $G\subset \Isom(\H^2)$ be a group generated by the reflections in sides of a triangle with angles 
$(\frac{\pi}{2},\frac{\pi}{3},\frac{\pi}{7})$.
It might contain as a subgroup a group $H$ generated by the reflections in sides of a triangle with angles 
$(\frac{\pi}{3},\frac{\pi}{3},\frac{\pi}{7})$.
Indeed, the covolume of $G$ is 
$$\pi-\frac{\pi}{2}-\frac{\pi}{3}-\frac{\pi}{7}=\frac{\pi}{42},$$
while the covolume of $H$ is exactly 8 times larger:
$$\pi-\frac{\pi}{3}-\frac{\pi}{3}-\frac{\pi}{7}=\frac{8\pi}{42}.$$
So, $H$ might be an index 8 subgroup of $G$. We prove that it is not the case.

Suppose that $H$ is embedded into $G$ so that $P$ is a fundamental chamber of $H$.
Take angle $\angle O$ (see Fig.~\ref{237}(a)) of size $\frac{\pi}{3}$ of $P$ and identify it with the angle of the same size
of some fundamental domain $F_0$ of $G$ 
(this corresponds to taking a parabolic subgroup $H_1=G_1$ such that the sides of this angle are $m_1$
and $m_2$ in  our notation).
%Denote by $F_0$ the fundamental domain of $G$ contained in this angle (and containing its vertex $O$).
We start to reflect $F_0$, so that the copies $F_i$ of $F_0$ fill the angle $\angle O$  (see Fig.~\ref{237}(a)).
Suppose that we know that $F_i\subset P$ for some $i$ (this is the case for $F_0$).  
For each mirror $m_j$ (obtained as a line containing a side of $F_i$) we check that it does not compose the angles
$\frac{\pi}{3}$ and $\frac{\pi}{7}$ with $m_1$ and $m_2$ respectively
(of course, we also need to check the inverse pair $(\frac{\pi}{7},\frac{\pi}{3})$).
Then $m_j$ contains no side of $P$ (unless $m_j\in\{ m_1,m_2\}$), which implies that the reflection image $F_{i'}=r_{m_j}F_i$ of $F_i$ 
with respect to $m_j$ is also contained in $P$.

When we run through 8 copies of $F_0$ and obtain more the 8 copies of $F_0$ contained in $P$,
 we stop and conclude that the subgroup does not exists (otherwise $P$ should be tiled by exactly 8 copies of $F_0$ since $[G:H]=8$).

%
%(If the subgroup does exist then the triangle with angles $(\frac{\pi}{3},\frac{\pi}{3},\frac{\pi}{7})$ 
%should be tiled by 8 copies of $(\frac{\pi}{2},\frac{\pi}{3},\frac{\pi}{7})$, so we should meet $m_*$ not later than 
%at the 8-th copy of $F_0$).  
%We fail to find $m_*$ to this stage, so the subgroup does not exist.

\begin{figure}[!h]
\begin{center}
\psfrag{a}{(a)}
\psfrag{b}{(b)}
\psfrag{F0}{\tiny $F_0$}
\psfrag{F1}{\tiny $F_1$}
\psfrag{F2}{\tiny $F_2$}
\psfrag{F3}{\tiny $F_3$}
\psfrag{F4}{\tiny $F_4$}
\psfrag{F5}{\tiny $F_5$}
\psfrag{F6}{\tiny $F_6$}
\psfrag{F7}{\tiny $F_7$}
\psfrag{O}{$O$}
\psfrag{p7}{\scriptsize $\frac{\pi}{7} $}
\psfrag{p3}{\scriptsize $\frac{\pi}{3}$}
\psfrag{d}{$d$}
\psfrag{m}{$m_*$}
\psfrag{m0}{\tiny $m_0$}
\psfrag{m1}{$m_1$}
\psfrag{m2}{$m_2$}
\epsfig{file=./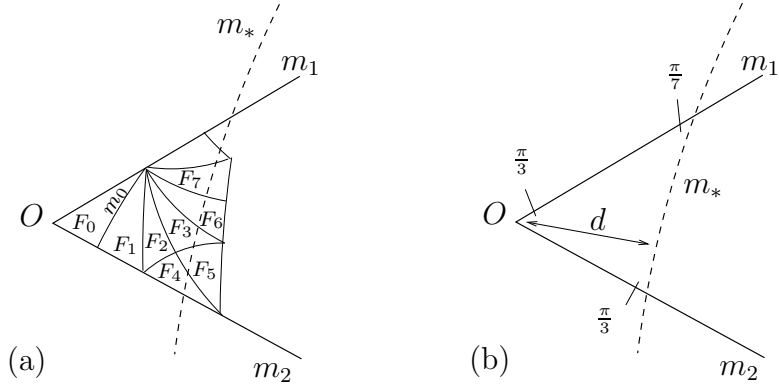,width=0.8\linewidth}
\caption{Checking that  $(\frac{\pi}{3},\frac{\pi}{3},\frac{\pi}{7})$ is not a subgroup of 
$(\frac{\pi}{2},\frac{\pi}{3},\frac{\pi}{7})$.}
\label{237}
\end{center}
\end{figure}

Unfortunately, this reasoning is difficult to generalize to the case of distinct ranks, since in this case the only possible index 
is infinite, and consequently it is not quite clear where to stop. 
On the other hand, we can find another reason to stop.
Indeed, if there exists a mirror $m_*$ which forms angles $\frac{\pi}{3}$ and $\frac{\pi}{7}$ with $m_1$ and $m_2$
(or reverse), then the distance $d$ from the vertex $O$ of the angle to the line $m_*$ can be easily found 
as the distance from the vertex to the opposite side in the triangle  $(\frac{\pi}{3},\frac{\pi}{3},\frac{\pi}{7})$.
Thus, instead of running through the first 8 copies of $F_0$ we need to run through all the copies of $F_0$ 
being inside the closed ball of radius $d$ centered in $O$.

\medskip
\noindent
In case of distinct ranks we still can embed $H_1$ into a parabolic subgroup of $G$ 
and start the reflection process described above.
In practice, if $H\subset G$ then the process will quickly find the missed mirror $m_*$
(and thus, the existence of the subgroup will be established).
However, if $H$ is not a subgroup of $G$ we do not know exact place where to stop the search.  
In what follows we define the compact balls to which we may restrict our search.

\subsection{Cocompact groups.}
Suppose that $G$ is a cocompact group. We will show that in this case the situation does not differ too much from one described in
2-dimensional example above.

Suppose that $H\subset G$ is embedded as a reflection subgroup, denote by $P$ the fundamental chamber of $H$
bounded by $m_1,\dots, m_k$ and $m_*$. 
Consider the $k$-plane
$\Pi $  invariant with respect to $H$. 
The section of $P$ by $\Pi$ is a $k$-simplex (denote it by $P'$).
Since any line perpendicular to both $L=\bigcap\limits_{i=1}^km_i$ and $m_*$ belongs to $\Pi$ (and coincides with the altitude of the simplex $P'$
going from the vertex $X=L\cap \Pi$ to the opposite side $m_*\cap \Pi$),
the distance $d=\rho(m_*,L)$ between $m_*$ and $L$ is equal to the distance from the vertex $X=L\cap \Pi$ of $P'$ to the opposite facet of $P'$
(see Fig.~\ref{cocompact}(a)).
This restricts us to a ``cylinder'' of radius $d$ centered in $L$ (i.e. to the set of points of $\H^n$
lying at distance at most $d$ from $L$). Unfortunately, this cylinder is still of infinite volume, so it is intersected by infinitely many fundamental domains of $G$. 

\begin{figure}[!h]
\begin{center}
\psfrag{a}{(a)}
\psfrag{b}{(b)}
\psfrag{L}{$L$}
\psfrag{F}{\scriptsize $F_0$}
\psfrag{O}{$O$}
\psfrag{X}{$X$}
\psfrag{H}{$\Pi$}
\psfrag{d}{$d$}
\psfrag{ma}{\tiny \phantom{D}$\le D_F$}
\psfrag{m}{$m_*$}
\psfrag{m1}{$m_1$}
\psfrag{m2}{$m_2$}
\epsfig{file=./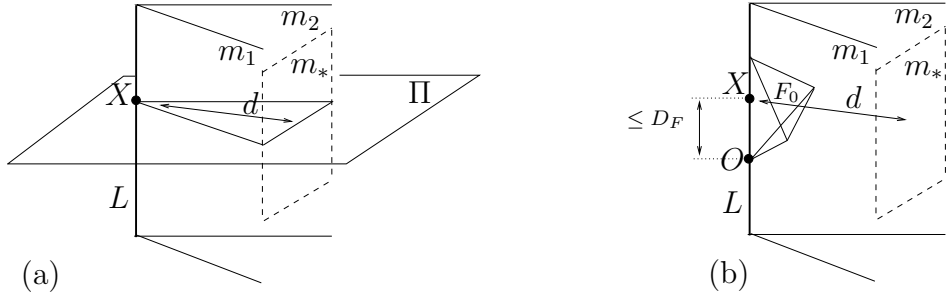,width=0.99\linewidth}
\caption{Geometry of cocompact case}
\label{cocompact}
\end{center}
\end{figure}

To overcome this infinite freedom of moving along $L$ 
we choose the fundamental simplex $F_0$ of $G$ in such a way that 
$X\in F_0$ (where $X=L\cap \Pi$) and $F_0$ has $m_1,\dots, m_k$ as facets, see Fig.~\ref{cocompact}(b). 
Notice that $X$ may not be a vertex of $F_0$.
Let $O$ be any vertex of $F_0$ contained in the subspace $L$ (such a vertex does exist since $L$ contains an $(n-k)$-face of $F_0$).
Denote by $D_{F}=D_{F_0}$ the diameter of $F_0$ (we define diameter as a maximal distance in $\H^n$ between two points of $F_0$). 
Here $D_{F}<\infty$ since $G$ is cocompact.
Then $\rho(O,X)\le D_{F}$, so we have
$$
\rho(O,m_*)\le \rho(O,X)+\rho(X,m_*)\le D_{F} + d.   
$$
Therefore, we may restrict the search to the ball of radius $ D_{F} + d$ centered in $O$.

\subsection{Finite covolume groups.}

Now we generalize the method above to the non-cocompact case.
Now the fundamental domain $F$ is not compact, so the reasoning above meets two obstacles:

1) The face $L\cap F$ may contain ideal vertices only;

2) Diameter of $F$ is infinite. 

Thus, we need to redefine the point $O$ and the value $D_{F}$.
For that we use the following proposition:

\begin{prop}[\cite{Ap}, Theorem~4.28]
\label{independence}
Let $G\subset \Isom(\H^n)$ be a discrete group. 
If the stabilizer $G_x$ of a point $x\in \partial \H^n$ contains a parabolic translation $t$ then the horoball 
$B_{x}=\{y \ | \ \rho(ty,y)\le 1 \}$ 
centered at $x$ satisfies $B_x\cap g (B_x)=\emptyset$ for any $g\in G\setminus G_x$.

\end{prop}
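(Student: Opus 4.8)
The statement is a version of the (generalized) Shimizu lemma: the sublevel set of the displacement function of a parabolic element is precisely invariant under its stabilizer in a discrete group. The plan is to normalize so that $x$ sits at the point at infinity, identify $B_x$ as a genuine horoball, and then contradict discreteness of $G$ by the classical Shimizu iteration applied to $t$ together with a conjugate of $t$.

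First I would pass to the upper half-space model of $\H^n$, with $x$ at infinity. Since $t$ is a parabolic \emph{translation}, it acts there as $y=(\xi,z)\mapsto(\xi+b,z)$ with $0\ne b\in\mathbb R^{n-1}$, so $\rho(ty,y)=2\,\mathrm{arcsinh}\!\big(|b|/(2z)\big)$ depends only on the height $z$, decreasing from $\infty$ to $0$ as $z$ runs from $0$ to $\infty$. Hence $B_x=\{y:\rho(ty,y)\le 1\}$ is exactly the closed horoball $\{z\ge z_0\}$ with $z_0=|b|/(2\sinh(1/2))$, which is the geometric assertion implicit in the statement. This step is a routine computation.

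Next I would argue by contradiction: suppose $g\in G\setminus G_x$ and $p\in B_x\cap g(B_x)$. Since $g$ does not fix $x$, we have $gx\ne x$, and $s:=gtg^{-1}$ is a parabolic fixing $gx$ and no other point of $\partial\H^n$. From $p\in B_x$ we get $\rho(tp,p)\le 1$, and from $g^{-1}p\in B_x$ we get $\rho(sp,p)=\rho(tg^{-1}p,g^{-1}p)\le 1$; thus $t$ and $s$ are two parabolics with distinct fixed points on $\partial\H^n$, each displacing $p$ by at most $1$. Now I would run the Shimizu iteration $g_0=g$, $g_{n+1}=g_n\,t\,g_n^{-1}$. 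Writing each $g_n$ in the half-space coordinates, the ``lower block'' of $g_n$ — the quantity measuring the failure of $g_n$ to fix $x$ — obeys a quadratic recursion, and the hypothesis that $B_x$ and $g(B_x)$ overlap is precisely what makes this recursion contracting; hence that quantity tends to $0$ and the $g_n$ converge to a translation fixing $x$, i.e. to an element of $G_x$. One checks the $g_n$ are pairwise distinct (otherwise $\langle t,s\rangle$ would be elementary, which the displacement bound also forbids for a discrete group), so this contradicts discreteness of $G$.

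The step I expect to be the main obstacle is the quantitative one: verifying that the constant in the recursion is exactly calibrated so that the bound $1$ in the definition of $B_x$ is the sharp threshold, and — since the Proposition asserts genuine disjointness of the \emph{closed} horoballs — treating separately the borderline, tangent configuration, where a J\o rgensen-type sharpening of the inequality is needed to force $g\in G_x$. The remaining ingredients, namely recognizing $B_x$ as a horoball and extracting from a point of the intersection the two small-displacement parabolics with distinct fixed points, are elementary.
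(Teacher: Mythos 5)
The paper offers no proof of this statement at all: it is imported verbatim as Theorem~4.28 of Apanasov's book, so there is no internal argument to measure yours against. Your outline is the standard Shimizu--Leutbecher route, which is presumably how the cited result is proved, and your first two steps are sound: for a parabolic translation $(\xi,z)\mapsto(\xi+b,z)$ one has $\sinh\left(\frac{1}{2}\rho(ty,y)\right)=|b|/(2z)$, so $B_x$ is the closed horoball $\{z\ge z_0\}$ with $z_0=|b|/(2\sinh(1/2))$, and a point of $B_x\cap g(B_x)$ does yield two parabolics $t$ and $gtg^{-1}$ with distinct fixed points, each displacing that point by at most $1$.

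However, the obstacle you flag at the end is not a borderline tangency issue but exactly where the argument breaks for the statement with the constant $1$. The Shimizu inequality bounds the isometric-sphere radius of any $g\notin G_x$ by the translation length $|b|$, and such a $g$ sends the horoball at Euclidean height $s$ to a ball of diameter $r_g^2/s$; disjointness therefore requires $s\ge |b|$, so the sharp displacement threshold is $\rho(ty,y)\le 2\,\mathrm{arcsinh}(1/2)=\mathrm{arccosh}(3/2)\approx 0.962$, which is strictly smaller than $1$. Your contraction simply does not run at threshold $1$: concretely, for $G=PSL(2,\Z)$ acting on $\H^2$ with $t(z)=z+1$ and $g(z)=-1/z$, the point $p=i$ satisfies $\rho(tp,p)=\mathrm{arccosh}(3/2)<1$ and $g(p)=p$, so $p\in B_\infty\cap g(B_\infty)$ although $g\notin G_\infty$. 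Thus the proof cannot be completed as proposed unless the constant $1$ is replaced by $2\,\mathrm{arcsinh}(1/2)$ (which is all the present paper actually uses, since only the existence of a canonically defined precisely invariant horoball enters the algorithm of Section~7), or unless additional hypotheses from Apanasov's formulation excluding such $g$ are restored. A second, lesser gap: the ``quadratic recursion for the lower block'' is asserted rather than established; for $n\ge 4$ the conjugates $g_ntg_n^{-1}$ must be handled via Clifford matrices or Waterman's generalization of Shimizu's lemma, and the hypothesis that $t$ is a pure translation (no rotational part) is essential there, since for screw parabolics no precisely invariant horoball need exist.
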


In other words, Proposition~\ref{independence} defines a horoball on which only $G_x$ acts non-trivially. 

Let $V$ be a point of $\partial \H^n$ such that the stabilizer $G_V$ of $V$ in $G$ contains a parabolic translation.
Let $\mathcal O$ be a horoball centered in $V$. The group $G_V$ acts on the horosphere $\partial \mathcal O$. 
Take a point $Z\in \partial \mathcal O$ on the horosphere.  
We call a parabolic element $t\in G_V$ a {\it minimal parabolic translation} in $G_V$ 
 if for any parabolic element $t'\in G_V$ we have $\rho(Z,tZ)\le\rho(Z,t'Z)$. 
We say that $\mathcal O$ is a {\it standard horoball} if $\rho(Z,tZ)=1$ for a minimal parabolic translation $t$.

Now we will use standard horoballs to define $O$ and $D_{F}$.
For each ideal vertex $V_i$ of $F_0$ we consider a standard horoball ${\mathcal O}_i$ centered at this vertex.
Denote by $F_{c}$ the ``compact part'' of $F_0$: 
$$F_{c}:=F_0\setminus \bigcup\limits_{i} {\mathcal O}_i.$$
We define $O$ as any point of $F_{c}\cap L$ and $D_{F}$ as a diameter of  $F_{c}$.

The domain $F_{c}$ is not convex (due to its horospherical ``facets''). 
Nevertheless, we will show that its diameter $D_{F}$ can be computed as a maximum distance between its vertices.
Let $\{W_1,\dots,W_s\}$ be the set of all vertices of  $F_{c}$: 
if $F$ has $q$ ideal vertices, then there are $n-q+1$ ``usual'' vertices and $qn$ vertices obtained as an intersection 
of a horosphere with an edge of $F$ (notice that by Proposition~\ref{independence} horoballs ${\mathcal O}_i$ and ${\mathcal O}_j$
have no points in common for $i\ne j$).

\begin{lemma}
$D_F=\max\limits_{0<i,j\le s} \rho(W_i,W_j)$.

\end{lemma}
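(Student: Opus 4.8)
The claim is that the diameter of the non-convex set $F_c$ — obtained from a finite-volume simplex $F_0$ by deleting a standard horoball at each ideal vertex — equals the maximum pairwise distance among its vertices $W_1,\dots,W_s$. The plan is to reduce the statement to a sequence of convexity-type observations, exploiting that $F_c$ is an intersection of half-spaces and complements of horoballs.

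\medskip

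\textbf{Step 1: fix the two extremal points and pass to a geodesic segment.} Since $F_c$ is closed and bounded (it has finite diameter because each deleted horoball removes a neighbourhood of the corresponding cusp), the supremum $D_F=\sup_{x,y\in F_c}\rho(x,y)$ is attained at some pair $A,B\in F_c$. Let $\gamma$ be the geodesic segment $[A,B]$. The goal is to show $A$ and $B$ can each be taken to be vertices of $F_c$, i.e. to push $A$ (and symmetrically $B$) to a vertex without decreasing $\rho(A,B)$.

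\medskip

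\textbf{Step 2: push each endpoint to the boundary, face by face.} Here I would use the standard fact that in $\H^n$, for a fixed point $B$, the function $x\mapsto \rho(x,B)$ is strictly convex along geodesics; hence on any convex set its maximum is attained at an extreme point. The subtlety is that $F_c$ is not convex: its bounding pieces are the hyperplanes $m_0,\dots,m_n$ carrying the facets of $F_0$ (totally geodesic, ``convex'' side) and the horospheres $\partial\mathcal O_i$ (on which $F_c$ lies outside the horoball). The horospheres are the obstacle, so I treat them first. A horoball is convex, so its complement is not — but its complement is ``star-shaped away from the center'' in the relevant sense: if $x$ lies on $\partial\mathcal O_i$, the geodesic ray from $x$ directed away from the cusp $V_i$ stays outside $\mathcal O_i$, and moving $x$ along $\partial\mathcal O_i$ toward an edge of $F_0$ keeps us in $F_c$ while, I claim, one can only increase the distance to $B$. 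More precisely: if the maximizer $A$ lies in the relative interior of the horospherical facet $\partial\mathcal O_i\cap F_c$, then because $\rho(\cdot,B)$ restricted to the horosphere $\partial\mathcal O_i$ has no interior local maximum (horospheres in $\H^n$ are themselves, with the induced metric, isometric to Euclidean $\mathbb R^{n-1}$, on which distance-to-an-exterior-point — realized as the restriction of the strictly convex ambient function — again attains its max on the boundary of any bounded region), we may slide $A$ within this facet toward its relative boundary without decreasing $\rho(A,B)$. That relative boundary consists of points lying on some $m_j$ as well, i.e. of lower-dimensional faces of $F_c$. Iterating, $A$ (and likewise $B$) is pushed onto faces of successively lower dimension.

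\medskip

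\textbf{Step 3: descend to a vertex on the geodesic facets.} Once $A$ lies on an intersection of some hyperplanes $m_j$ (a flat face of $F_c$), the region is genuinely convex there, so strict convexity of $\rho(\cdot,B)$ forces $A$ to an extreme point of that face, i.e. to a vertex $W_i$ of $F_c$. Running the same argument for $B$ — with $A=W_i$ now fixed — yields $B=W_j$. Hence $D_F=\rho(W_i,W_j)\le \max_{0<i,j\le s}\rho(W_i,W_j)$, and the reverse inequality is trivial since each $W_i\in F_c$. This proves $D_F=\max_{0<i,j\le s}\rho(W_i,W_j)$.

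\medskip

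\textbf{Main obstacle.} The routine part is the strict convexity of distance functions on flat faces; the real work is Step 2, handling the horospherical facets. One must be careful that sliding $A$ along $\partial\mathcal O_i$ keeps it inside $F_c$ (it does, because by Proposition~\ref{independence} the horoballs $\mathcal O_i$ are pairwise disjoint, so the only constraints near such a facet are the $m_j$'s bounding it) and that the induced-metric geometry of the horosphere really does forbid an interior maximum of the restricted distance function — this is where I would spend the most care, invoking the Euclidean model of the horosphere and the fact that the ambient geodesic from $B$ to an interior maximizer would have to meet $\partial\mathcal O_i$ orthogonally from outside, which a short computation (or a convexity/first-variation argument) rules out unless the foot of the perpendicular is a boundary point of the facet.
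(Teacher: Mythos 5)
Your argument is correct in outline but takes a genuinely different and considerably longer route than the paper's. The paper disposes of the nontrivial inequality in two lines: let $F'_c$ be the convex hull of $W_1,\dots,W_s$; then $F_c\subseteq F'_c$ (the geodesic hull of the truncation points on each horosphere dips into the corresponding horoball, since horoballs are convex, so it covers the horospherical facets), and the diameter of the convex polytope $F'_c$ equals the maximal pairwise distance between its vertices by convexity of the distance function along geodesics. All of the horosphere geometry that occupies your Steps 2--3 is thereby absorbed into a single containment. Your variational descent does work, and the computation you defer is genuinely fine: in the upper half-space model with $\partial\mathcal O_i=\{z=1\}$ and $B=(x_B,z_B)$ one has $\cosh\rho((x,1),B)=1+\bigl(|x-x_B|^2+(1-z_B)^2\bigr)/(2z_B)$, an increasing function of the strictly convex function $|x-x_B|^2$, so the restriction of $\rho(\cdot,B)$ to a horosphere has a unique critical point and it is a minimum; hence there is no interior maximum on any horospherical facet. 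The one inaccuracy is in your Step 3: a face of $F_c$ cut out by mirrors $m_{j_1},\dots,m_{j_k}$ is itself a truncated simplex, not a convex set, as soon as the corresponding face of $F_0$ has ideal vertices, so you cannot pass directly to an extreme point there; you must re-enter the Step 2 argument inside that face and iterate the dimension descent all the way down to the vertices. This is a repairable slip, since your induction already has the right shape, but it illustrates what the convex hull argument buys: it avoids every case distinction at once, whereas your approach is more self-contained and actually locates where the extremal pair must sit.
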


\begin{proof}
The inequality $D_F\ge \max\limits_{0<i,j\le s} \rho(W_i,W_j)$ is evident.
To prove that  $D_F\le \max\limits_{0<i,j\le s} \rho(W_i,W_j)$, 
take convex hull $F'_c$ of  $\{W_1,\dots,W_s\}$. The polytope $F'_c$ contains $F_c$, thus the diameter of $F_c$ does not exceed
 the diameter of $F'_c$, while the latter coincides with the right hand side of the inequality.

\end{proof}

\begin{lemma} 
$\rho(O,m_*)\le 2d+D_F$, where $D_F$ is the diameter of $F_{c}$.
\end{lemma}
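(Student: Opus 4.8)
The plan is to imitate the cocompact estimate $\rho(O,m_*)\le\rho(O,X)+\rho(X,m_*)$, correcting for the fact that $X=L\cap\Pi$ may now lie inside a cusp of $F_0$: I will replace $X$ by a nearby point $\bar X$ of $F_c\cap L$ and show that $\rho(X,\bar X)\le d$. The starting observations are that $\Pi$, being $H$-invariant and not contained in any of the mirrors $m_1,\dots,m_k,m_*$ (as $H$ acts faithfully on $\Pi$), meets each of $m_1,\dots,m_k$ and $m_*$ orthogonally, hence meets $L=\bigcap_{i\le k}m_i$ orthogonally at $X$; so the perpendicular from $X$ to $m_*$ lies in $\Pi$, is a common perpendicular of $L$ and $m_*$, and has length $\rho(X,m_*)=\rho(L,m_*)=d$. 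If $X\in F_c$ we are already done, since then $\rho(O,m_*)\le\rho(O,X)+d\le D_F+d$ because $O,X\in F_c$.

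So assume $X\notin F_c$; then $X$ lies in exactly one of the pairwise disjoint standard horoballs, say $\mathcal O_i$, centred at an ideal vertex $V_i$ of $F_0$. Since the standard horoballs at all cusps of $G$ form a $G$-invariant, pairwise disjoint family (Proposition~\ref{independence}) and $H_1$ fixes $X$, the group $H_1$ fixes $\mathcal O_i$, hence fixes $V_i$; as $L$ is the fixed-point set of $H_1$ in $\overline{\H^n}$, this gives $V_i\in\partial L$ and the ray from $X$ towards $V_i$ lies in $L$. Next, $m_*$ cannot pass through $V_i$: otherwise $r_*$ would fix $V_i$, so $H=\langle r_1,\dots,r_k,r_*\rangle$ would fix $V_i$, and since $H$ preserves $\Pi$ it would then fix either an ideal point of $\Pi$ or the foot of the perpendicular from $V_i$ to $\Pi$ — impossible, because $H$ acts on $\Pi\cong\H^k$ as an infinite discrete reflection group, which fixes no point of $\overline{\H^k}$.

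Now put $V_i$ at infinity in the upper half-space model, so $\mathcal O_i=\{z>z_0\}$. By Proposition~\ref{independence}, any mirror $m$ of $G$ with $r_m\notin G_{V_i}$ satisfies $r_m(\mathcal O_i)\cap\mathcal O_i=\emptyset$, which forces its Euclidean radius to be $\le z_0$, so such an $m$ is disjoint from $\mathcal O_i$. Applying this to the facets of $F_0$ not containing $V_i$ shows that $F_0\cap\mathcal O_i$ is a vertical prism over the cusp cross-section; applying it to $m_*$ (which does not contain $V_i$) gives $m_*\cap\mathcal O_i=\emptyset$. Hence every path from $X\in\mathcal O_i$ to $m_*$ meets $\partial\mathcal O_i$, so $d=\rho(X,m_*)\ge\rho(X,\partial\mathcal O_i)$; and the point $\bar X$ where the geodesic through $X$ and $V_i$, followed away from $V_i$, reaches $\partial\mathcal O_i$ lies, by the prism description, in $F_0\cap\partial\mathcal O_i\subset F_c$, lies in $L$, and satisfies $\rho(X,\bar X)=\rho(X,\partial\mathcal O_i)\le d$. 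Therefore
$$\rho(O,m_*)\le\rho(O,\bar X)+\rho(\bar X,X)+\rho(X,m_*)\le D_F+d+d=2d+D_F .$$

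I expect the main obstacle to be the third paragraph: verifying that the standard horoball $\mathcal O_i$ sits low enough that the part of $F_0$ above it is an honest vertical prism — so that descending from $X$ really returns a point of $F_c\cap L$ — is exactly where Proposition~\ref{independence} does the real work, once applied to the facets of $F_0$ and once to $m_*$. The orthogonality bookkeeping that identifies $\rho(X,m_*)$ with $d$, and the no-fixed-point argument that excludes $V_i\in\overline{m_*}$, are the remaining points that need some care.
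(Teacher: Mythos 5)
Your proof is correct and follows essentially the same route as the paper's: split on whether $X\in F_{c}$, and in the cusp case bound $\rho(X,\partial{\mathcal O}_i)\le d$ using that $m_*$ misses the standard horoball, then apply the triangle inequality through the nearest point of $\partial{\mathcal O}_i$ (your $\bar X$ is exactly the paper's $X_{{\mathcal O}_i}$). The only difference is that you justify two steps the paper leaves implicit --- that the nearest boundary point lies in $F_{c}\cap L$, and that $r_*\notin G_{V_i}$ so that Proposition~\ref{independence} really applies to $m_*$ --- which is added care rather than a different argument.
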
  

\begin{proof}
Consider a common perpendicular $h$ of $L$ and $m_*$, denote its ends by $X\in L$ and $Y\in m_*$.
By assumption, the length of $h$ is $d=\rho(X,Y)$, and we need to find an upper bound for $\rho(O,Y)$ (and then use $\rho(O,m_*)\le \rho(O,Y)+\rho(X,Y)$).

If $X\in F_{c}$, then  $$\rho(O,Y)\le \rho(O,X)+\rho(X,Y)\le D_{F}+d$$
and the lemma follows.

Suppose that  $X\notin F_{c}$. Then $X\in {\mathcal O}_i$ for some standard horoball $ {\mathcal O}_i$.
Denote by $X_{{\mathcal O}_i}$ the point of   $\partial {\mathcal O}_i$ closest to $X$.
It is clear that $X_{{\mathcal O}_i}\in  F_{c}$ (see Fig.~\ref{radius}). 
By Proposition~\ref{independence}, the mirror $m_*$ does not intersect the standard horoball  ${\mathcal O}_i$.
So, $Y$ lies outside of  ${\mathcal O}_i$, while $X$ lies inside.
Therefore, $$d=\rho(X,Y)\ge \rho(X,X_{{\mathcal O}_i}).$$ 
In addition, $X_{{\mathcal O}_i}\in  F_{c}$, which implies
$\rho(O,X_{{\mathcal O}_i})\le D_F$. So, 
$$
\rho(O,Y)\le \rho(O,X_{{\mathcal O}_i})+\rho(X_{{\mathcal O}_i},X)+\rho(X,Y)\le D_F+d+d=D_F+2d, 
$$
and the lemma is proved.

\begin{figure}[!h]
\begin{center}
\psfrag{L}{$L$}
\psfrag{X}{$X$}
\psfrag{Xo}{$X_{{\mathcal O}_i}$}
\psfrag{Y}{$Y$}
\psfrag{m}{$m_*$}
\psfrag{Oi}{${\mathcal O}_i$}
\psfrag{O}{$O$}
\psfrag{d}{$\partial \H^n$}
\epsfig{file=./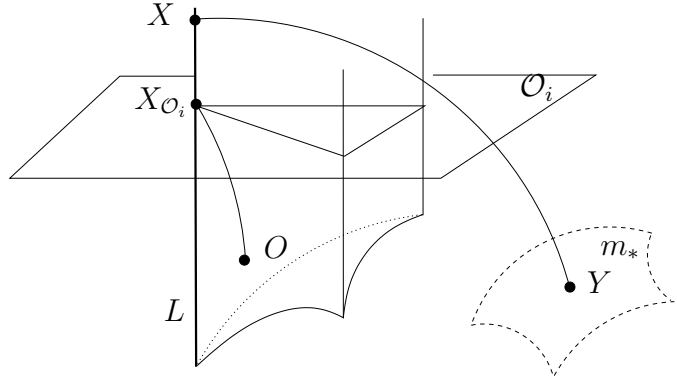,width=0.7\linewidth}
\caption{$\rho(O,m_*)\le 2d+D_F$, proof.}
\label{radius}
\end{center}
\end{figure}

\end{proof}

\subsection{Description of the algorithm}
The reasoning above can be summarized in the following algorithm.

Let $H$ and $G$ be two simplicial groups such that each maximal finite parabolic subgroup of $H$ is a parabolic subgroup of $G$.
Suppose that the fundamental domain of $H$ is not an ideal simplex.
Then to determine if $H$ embeds as a maximal simplicial subgroup into $G$ it is sufficient to do the following:

\begin{itemize}
\item[{\bf A.}] Fix any finite maximal parabolic subgroup $H_1$ of $H$ and embed it into $G$ as a parabolic subgroup.

\item[{\bf B.}] Find $d$ and $D_F$,
then find the radius $R$ of the ball. If $G$ is cocompact then $R$ is given by the following formula:
$$
R=d +D_F. 
$$
If $G$ is not cocompact, then 
$$
R=2d+D_F.
$$

\item[{\bf C.}]
Choose a fundamental simplex $F_0$ of the group $G$ with a face contained in $L$ (the face is of the dimension $\rank H -1$).
Take any point $O\in L\cap F_{c}$.
Reflecting simplex $F$ with respect to its facets, find all fundamental simplices intersecting the ball $B_R(O)$ of radius $R$
centered in $O$.
For each mirror of $G$ intersecting the ball $B_R(O)$ check if the reflection in this mirror together with $H_1$ generate a group 
{\rr}-isomorphic to $H$. In other words, check if this mirror composes the same angles with $m_1,...,m_k$ as it should.

\end{itemize}

Since the ball  $B_R(O)$ is compact and $G$ is a discrete group, there are finitely many copies  of $F$
intersecting  $B_R(O)$, so all of them can be listed in finitely many steps. 

\begin{remark}
While proving non-existence of an embedding (or looking for all possible embeddings), in Step A we need to embed $H_1$ into $G$ as a parabolic subgroup in all possible ways (up to inner automorphism of $G$).

\end{remark}

\noindent
Now we describe in more details steps B and C.

\medskip 
\noindent
{\bf B1. Finding $d$:} 
Let $P$ be a fundamental domain of $H'$. The parabolic subgroup $H_1$ is a stabilizer of some vertex $V$ of $P$.
Let $f$ be a facet of $P$ opposite to the vertex $V$. Then $d=\rho(V,f)$.  

\medskip 
\noindent
{\bf B2. Finding $D_F$:} 
If $G$ is cocompact, then $D_F$ is just a diameter of $F$ (where $F$ is a fundamental domain of $G$).
If $G$ is not cocompact, then for each ideal vertex of $F$ we find explicitly the standard horosphere ${\mathcal O}_i$
and compute $D_F$ as a diameter of $F_c=F\setminus \bigcup\limits_i {\mathcal O}_i$.

\medskip 
\noindent
{\bf C. Running through all mirrors of $G$ intersecting $B_R(O)$:} 
We introduce distance on the set of fundamental simplices of $G$ as the distance on the graph dual to the tessellation of $\H^n$ by fundamental chambers. Namely, two fundamental simplices of $G$ are at distance 1 if they have a common facet.
Two fundamental simplices $F$ and $F'$ are at distance $k$ if there is a sequence of fundamental simplices
$F=F_0,F_1,\dots,F_{k-1},F_k=F'$ such that $F_i$ is at distance 1 from $F_{i+1}$ (for all $0\le i <k$)
and there is no shorter sequence satisfying this conditions.

Each mirror intersecting the closed ball $B_R(O)$ is a facet of some fundamental simplex (which also intersects the ball).
So, we look through all the fundamental simplices starting from the simplices on distance one from $F$,
then check simplices on distance 2, and so one.
We stop either when the subgroup is found or if the ball is exhausted.

\begin{remark}
The algorithm works unless  $H$ is isomorphic to a group generated by any of the five ideal simplices, see the lower row of 
Fig.~\ref{but-ex} for their Coxeter diagrams and notation.  
%(a finite volume simplex in $\H^n$ is called {\it ideal} if all its vertices belong to the boundary of $\H^n$).
However, in view of Lemma~\ref{simplicial} no of the simplices $[(3,6)^{[2]}]$ and $[(3^2,4)^{[2]}]$  
can generate a subgroup of a simplicial group of higher rank 
(they have parabolic subgroups of the types $G_2$ and $F_4$ respectively).
So, the algorithm can be applied unless $H$ is $(0,0,0)$, $[4^{[4]}]$ or $[3^{[3,3]}]$. 

\end{remark}

%\begin{figure}[!h]
%\begin{center}
%\psfrag{a}{(a)}
%\psfrag{b}{(b)}
%\psfrag{1}{\small $(0,0,0)$}
%\psfrag{2}{\small $[4^{[4]}]$}
%\psfrag{3}{\small $[3^{[3,3]}]$}
%\psfrag{4}{\small $[(3,6)^{[2]}]$}
%\psfrag{5}{\small $[(3^2,4)^{[2]}]$}
%\epsfig{file=./,width=0.95\linewidth}
%\caption{Ideal simplices: (a) simplices not satisfying the conditions of the algorithm, (b) the rest ideal Coxeter simplices. The notation is cited from~\cite{JKRT-vol}. }
%\label{ideal}
%\end{center}
%\end{figure}

\section{Classification of subgroups}
\label{classification}

In this section we list simplicial subgroups $H\subset G$ of simplicial reflection groups of rank greater than 3.
Section~\ref{ar} is devoted to subgroups of arithmetic over $\Q$ groups, in
 Section~\ref{non-ar} we list subgroups of the remaining ones.
We start with the subgroups of the simply-laced groups, then 
proceed by decreasing of the smallest dihedral angle of the fundamental chamber of $G$ while it is greater or equal to $\pi/6$. 
%we consider subgroups of groups whose fundamental domain has a dihedral angle $\pi/4$ 
%and then to these with  $\pi/5$ and $\pi/6$.
A dihedral angle smaller than $\pi/6$ may appear only in the rank 3 groups.
Finite index simplicial reflection subgroups of rank 3 simplicial groups are listed in~\cite{treug},
and we do not reproduce here the list for the groups with fundamental chambers having angles smaller than $\pi/6$.
%The infinite index simplicial subgroup of a rank 3 simplicial group is always a rank 2 simplicial group,
%which is evidently a subgroup of any simplicial group.

For each of the non-visual maximal subgroups we also present an explicit embedding (Section~\ref{non-visual}).

Subgroup relations between the groups of the same rank are cited from~\cite{JKRT}.

\begin{theorem}
\label{classification of subgroups}
Let $H$ and $G$ be two simplicial groups, $3<\rank H\le \rank G$. $H$ is a subgroup of $G$ if and only if 
there exists a sequence of embeddings $H=K_0\subset K_1\subset \dots \subset K_l=G$, where all embeddings $K_i\subset K_{i+1}$
are maximal ones shown in Tables~\ref{3}--~\ref{6-5}.

\end{theorem}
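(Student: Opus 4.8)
The plan is to reduce the global statement to the local data already assembled in the paper, namely the tables of maximal embeddings (Tables~\ref{3}--\ref{6-5}) and the theorems of Section~\ref{Maximal subgroups}. The ``if'' direction is immediate: if such a chain $H=K_0\subset K_1\subset\dots\subset K_l=G$ exists, then composing the embeddings gives $H\subset G$, so nothing has to be proved here beyond observing that a composition of reflection-subgroup embeddings is again one.

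The substance is the ``only if'' direction. Suppose $H\subset G$ is a simplicial subgroup with $3<\rank H\le\rank G$. I would argue by induction on $\rank G-\rank H$ together with a secondary induction on the index (or, when the index is infinite, on $[\,\overline{H}:H\,]$ inside a suitable intermediate group). If $H=G$ there is nothing to prove. Otherwise, choose a reflection subgroup $K$ with $H\subsetneq K\subseteq G$ that is \emph{maximal} among simplicial reflection subgroups of $G$ properly containing $H$; such a $K$ exists because the ascending chain condition holds here — simplicial groups in $\H^n$ exist only for $n\le 9$ by~\cite[Table~4]{29}, so ranks are bounded, and for fixed rank a reflection group has only finitely many reflection subgroups of a given covolume. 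Then $K$ is a maximal simplicial subgroup of $G$. If $\rank K<\rank G$, Theorem~\ref{subalg for max subgr} (and the discussion around Lemma~\ref{simplicial}) tells us $K$ sits inside $G$ in one of the classified ways; if $\rank K=\rank G$, the maximal-rank classification of~\cite{JKRT} does the same. In either case the pair $(K,G)$ appears (up to the chain-of-maximal-steps convention) in Tables~\ref{3}--\ref{6-5}. Now $H\subset K$ with $\rank K<\rank G$ or with smaller index, so the inductive hypothesis produces a chain of maximal embeddings from $H$ to $K$, and prepending it to $K\subset G$ finishes the step.

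Two points need care and form the real content. First, one must know that the pair $(K,G)$ for a maximal simplicial $K\subset G$ genuinely occurs in the tables — this is exactly the classification effort of Sections~\ref{visual}--\ref{algorithm}: visual subgroups and maximal-rank subgroups account for ``almost all'' cases, the methods of Section~\ref{methods} (large balls, indefinite coefficients, short/long splitting) dispatch the remaining non-cocompact or non-simply-laced cases, and the algorithm of Section~\ref{algorithm} covers everything except the three ideal-simplex groups $(0,0,0)$, $[4^{[4]}]$, $[3^{[3,3]}]$, which are handled separately (Example~\ref{ex-id2inid3}, Example~\ref{f2}, and their companions). I would cite these as already established. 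Second, when $\rank H=\rank K=\rank G$ the ``maximal'' step might still have nontrivial index, so I must make sure the induction terminates: here I invoke that the chain of strict inclusions of reflection subgroups of bounded rank and strictly decreasing covolume must be finite, which is the content of Proposition~\ref{subgroup} together with finiteness of the relevant Coxeter-diagram lists.

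The main obstacle I anticipate is bookkeeping the \emph{non-maximal} intermediate steps correctly, that is, verifying that every maximal simplicial subgroup relation that can occur really is listed and that no pair $(H,G)$ of non-equal ranks with $3<\rank H$ admits a ``sporadic'' embedding not factoring through the tabulated maximal ones — in particular checking that the three exceptional ideal-simplex groups do not slip into some larger simplicial group in an unlisted way. This is precisely why the paper needs both the geometric ad hoc arguments of Section~\ref{methods} and the exhaustive finite search of Section~\ref{algorithm}; the proof of the theorem itself is then the short inductive assembly sketched above, with all the labor hidden in the case analysis feeding Tables~\ref{3}--\ref{6-5}.
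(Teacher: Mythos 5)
Your proposal is correct and coincides with the paper's (largely implicit) argument: the theorem carries no separate proof in the paper because it is a summary of the exhaustive pairwise verification of Sections~\ref{visual}--\ref{algorithm}, the ``if'' direction being trivial and the ``only if'' direction being exactly the factorization of an arbitrary embedding through a chain of maximal simplicial subgroups, which terminates since rank is non-decreasing and bounded along inclusions and, at fixed rank, covolume drops by an integer factor at each step. One wording fix: choose $K$ maximal among \emph{proper} simplicial subgroups of $G$ containing $H$, not among those ``properly containing $H$'' (whose maximal element would be $G$ itself).
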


\medskip 
\noindent
{\bf Notation.} We present  simplicial groups by the corresponding Coxeter diagrams.
If $H\subset G$ we draw $\Sigma(H)\to \Sigma(G)$, where   $\Sigma(G)$ is a Coxeter diagram of $G$ and  $\Sigma(H)$ is 
a Coxeter diagram of $H$.
To make the tables readable, we draw only maximal embeddings.

We use the following four types of arrows:

\epsfig{file=./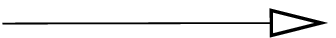,width=0.1\linewidth} \
for embeddings $H\subset G$, where 
$\rank H=\rank G$;

\epsfig{file=./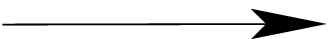,width=0.1\linewidth} \
for visual embeddings;

\epsfig{file=./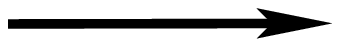,width=0.1\linewidth} \
for non-visual embeddings, where $\rank H=\rank G-1$;

\epsfig{file=./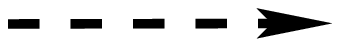,width=0.1\linewidth} \
for non-visual embeddings, where $\rank H<\rank G-1$.

\begin{remark} 
\label{class}
For each pair of simplicial groups $(H,G)$ we check if $H$ can be embedded into $G$.
In fact, if $H\subset G$ then $H$ may admit several different embeddings.
Our algorithm (Section~\ref{algorithm}) allows to list all of them,
however, the computation will be rather long. We restrict ourselves to the question
of existence of the embedding.

\end{remark}

\subsection{Subgroups of arithmetic over $\Q$ groups}
\label{ar}

\begin{table}[!b]
\begin{center}
\caption{Subgroups of simply-laced groups}
\label{3}
\psfrag{d}{ $\rank$  }
\psfrag{2}{$3$}
\psfrag{3}{$4$}
\psfrag{4}{$5$}
\psfrag{5}{$6$}
\psfrag{6}{$7$}
\psfrag{7}{$8$}
\psfrag{8}{$9$}
\psfrag{9}{$10$}
\epsfig{file=./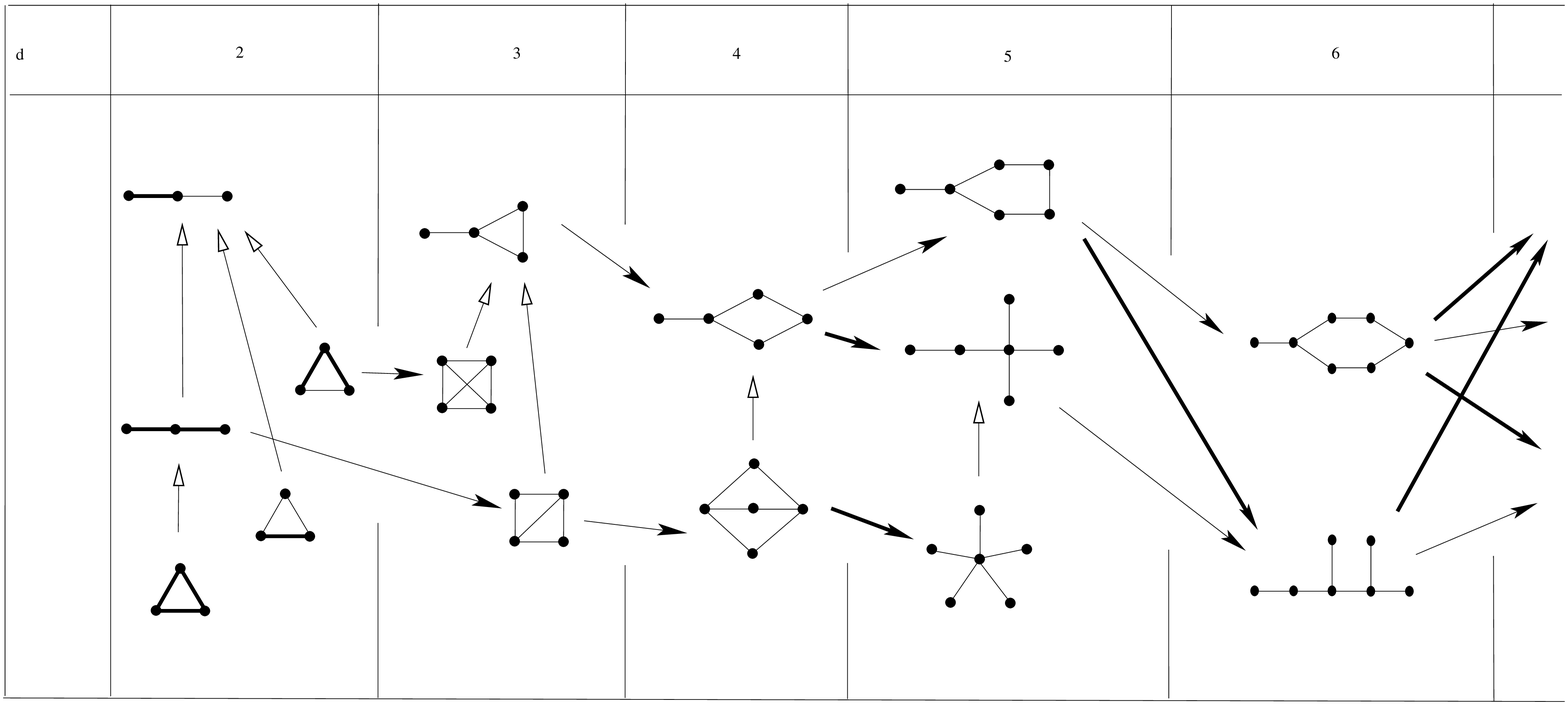,width=0.999\linewidth}

\vspace{30pt}
\psfrag{d}{$\rank$}
\psfrag{2}{$3$}
\psfrag{3}{$4$}
\psfrag{4}{$5$}
\psfrag{5}{$6$}
\psfrag{6}{$7$}
\psfrag{7}{$8$}
\psfrag{8}{$9$}
\psfrag{9}{$10$}
\epsfig{file=./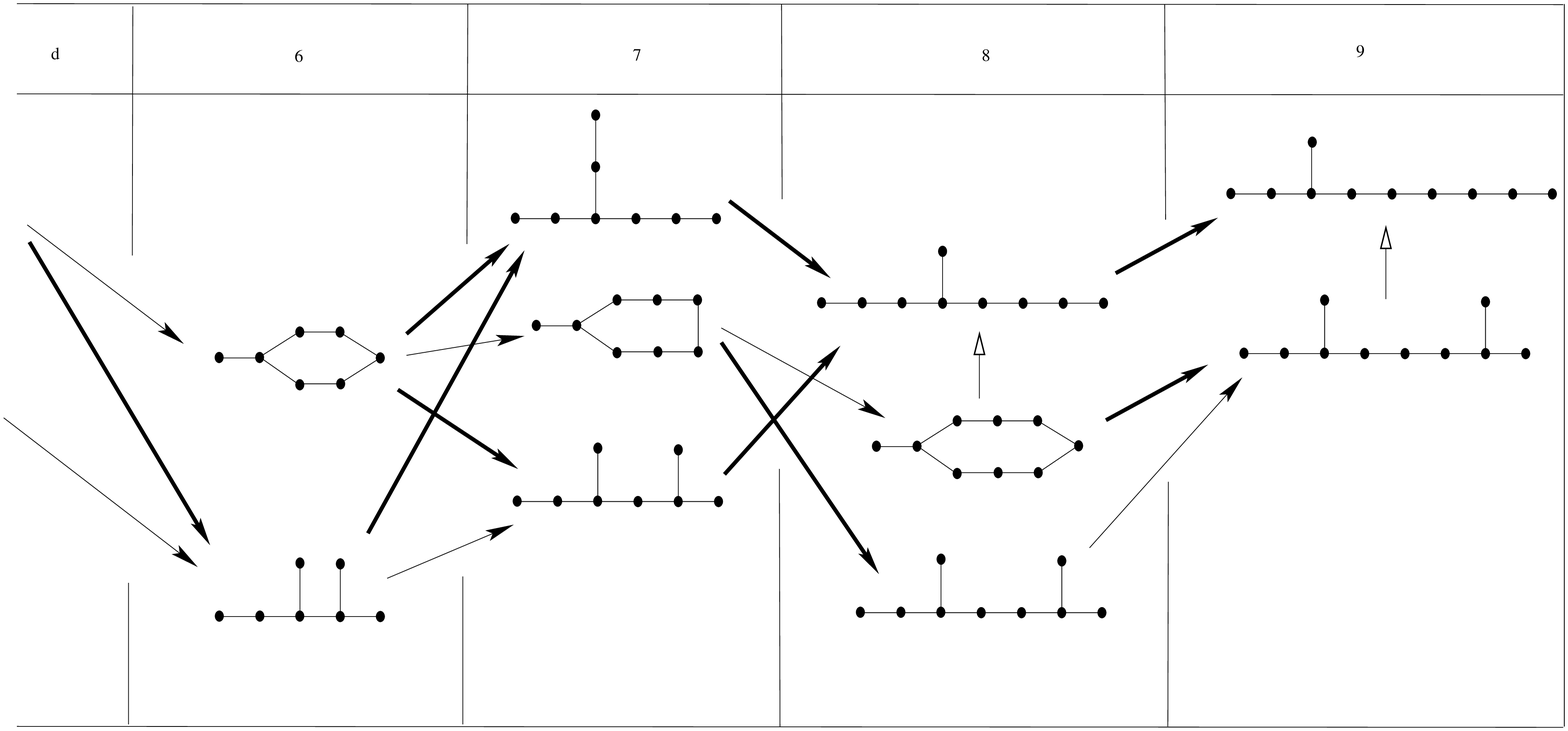,width=0.999\linewidth}
\end{center}
\end{table}

\begin{table}[!h]
\begin{center}
\caption{Maximal subgroups of rank 4 groups with $\pi/4$ (but without  $\pi/6$)}
\label{4_2in3}
\epsfig{file=./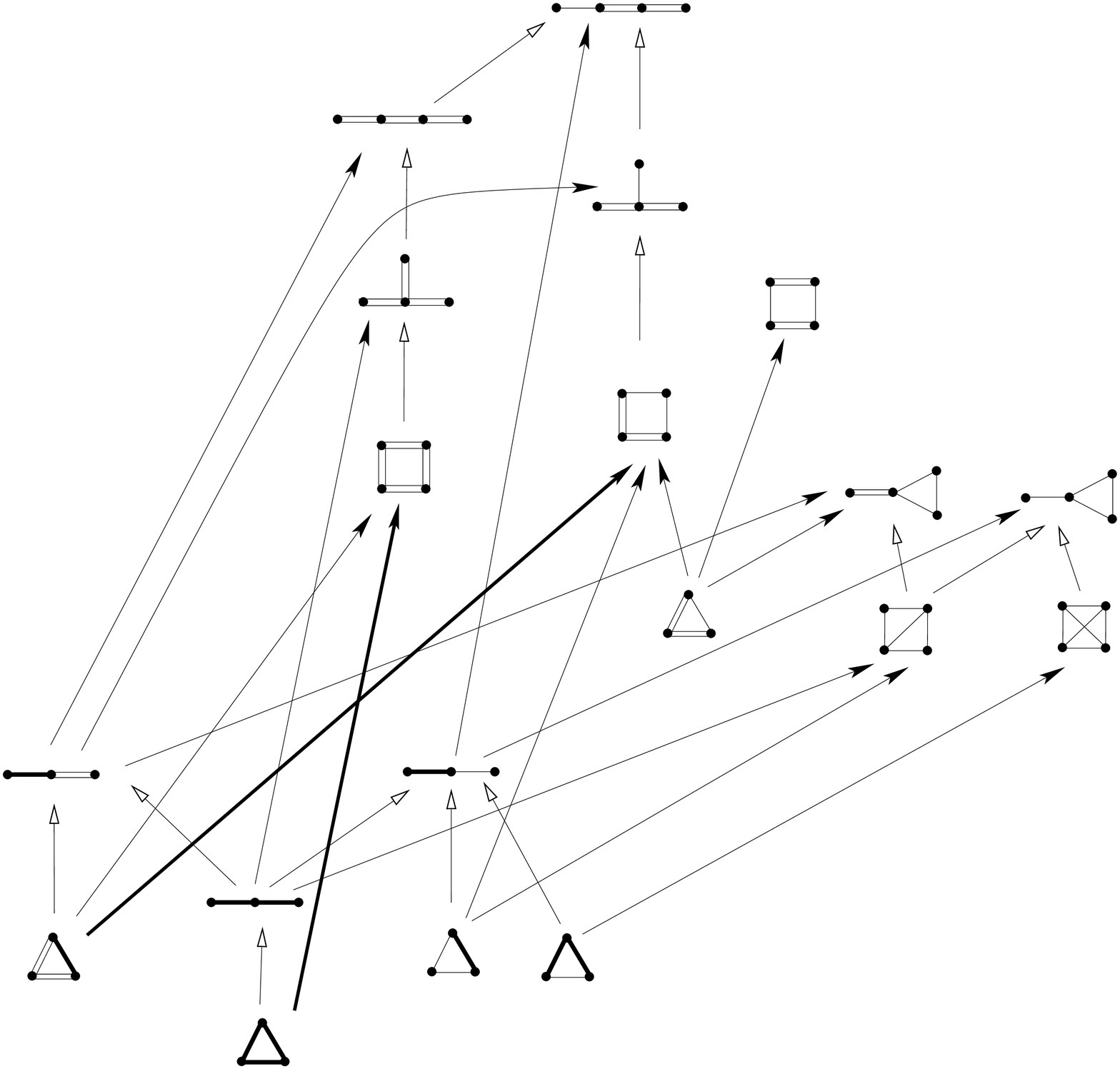,width=0.85\linewidth}
\end{center}
\end{table}

\begin{table}[!h]
\begin{center}
\caption{Maximal subgroups of  rank 5 groups with $\pi/4$ }
\label{4_3in4}
\epsfig{file=./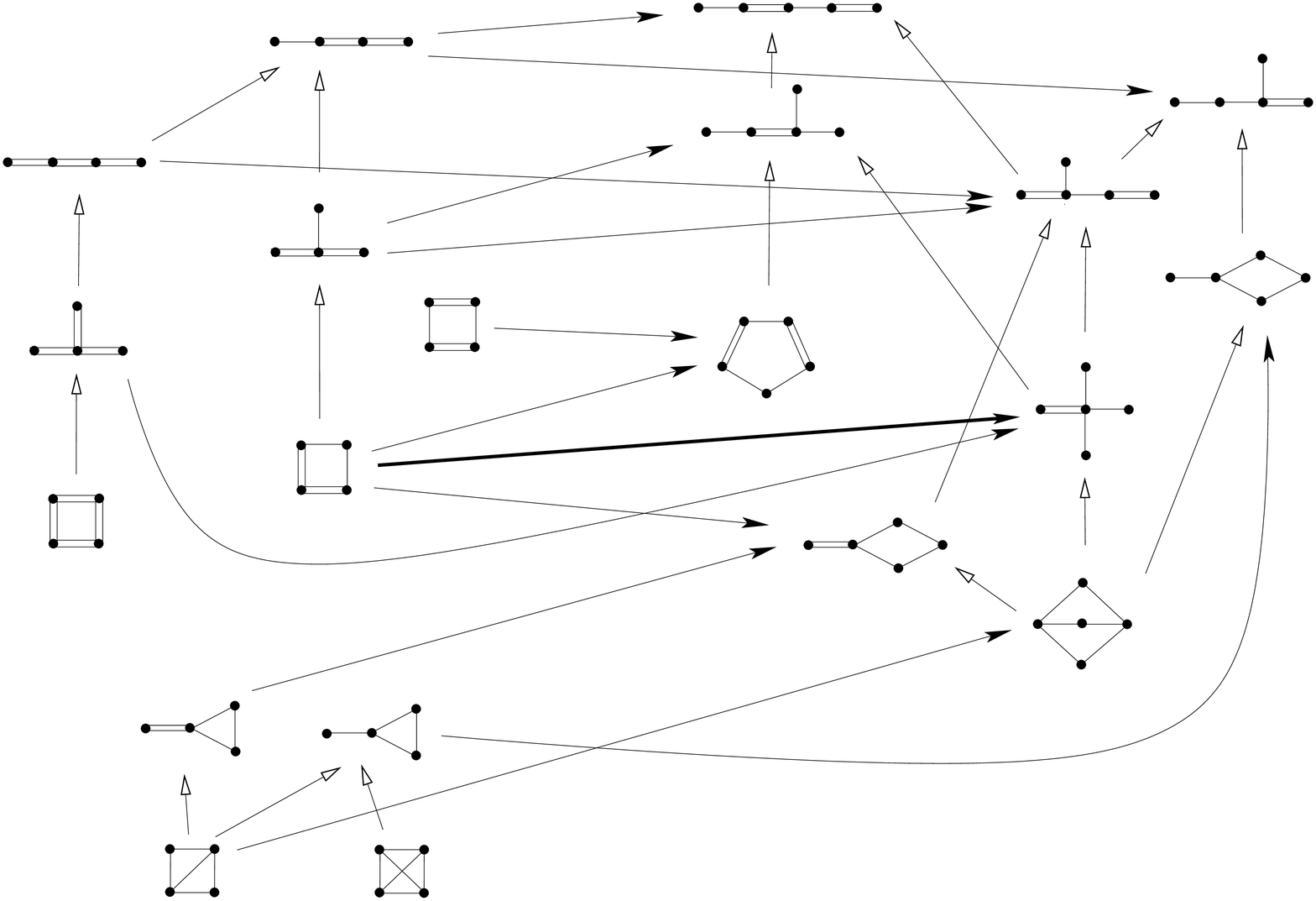,width=0.85\linewidth}
\end{center}
\end{table}

\begin{table}[!ht]
%\begin{center}
\caption{Maximal subgroups of rank 6 groups  with $\pi/4$ }
\label{4_4in5}
\epsfig{file=./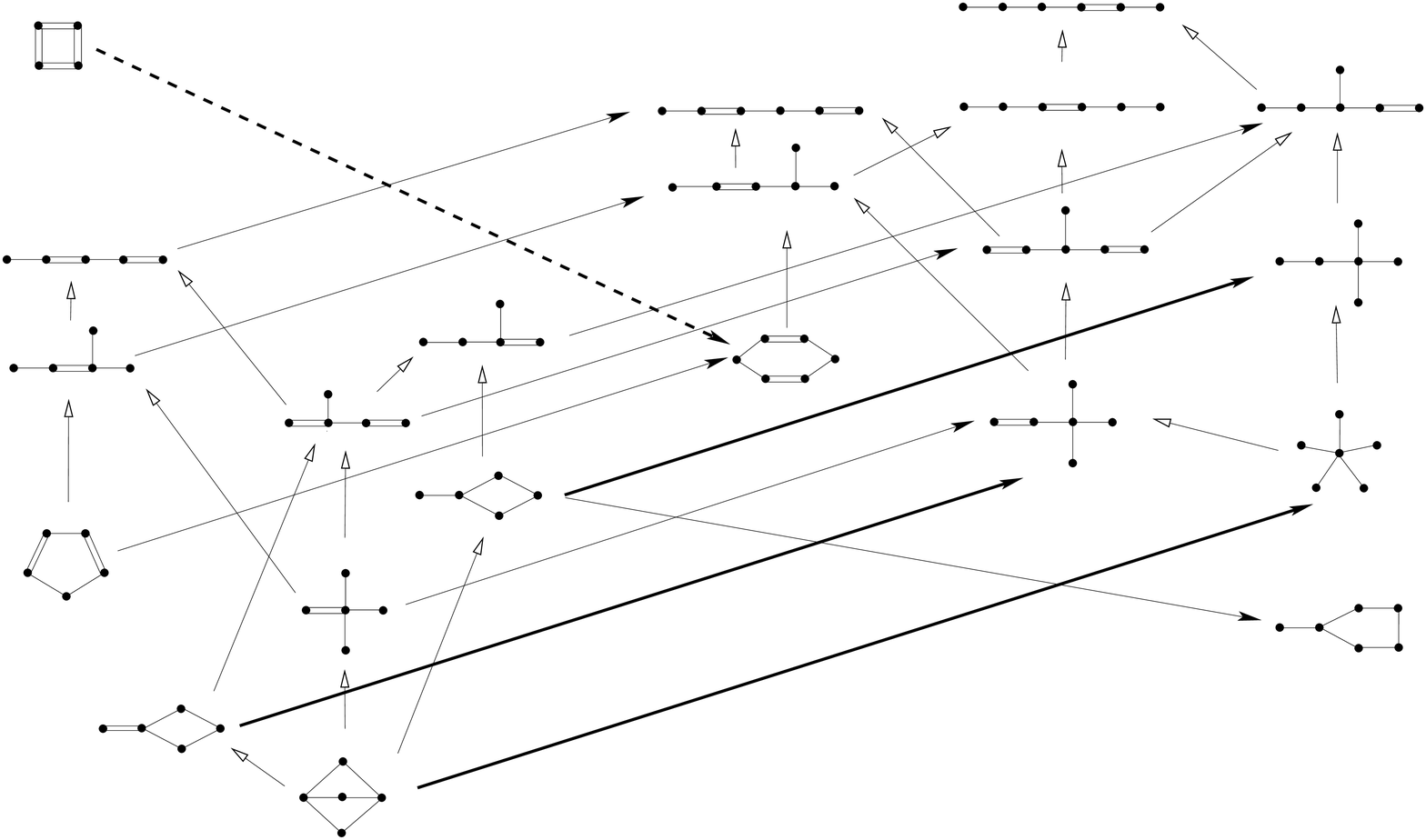,width=0.999\linewidth}
%\end{center}
\end{table}

\begin{table}[!hb]
%\begin{center}
\caption{Maximal subgroups of rank 7 groups  with $\pi/4$}
\label{4_5in6}
\epsfig{file=./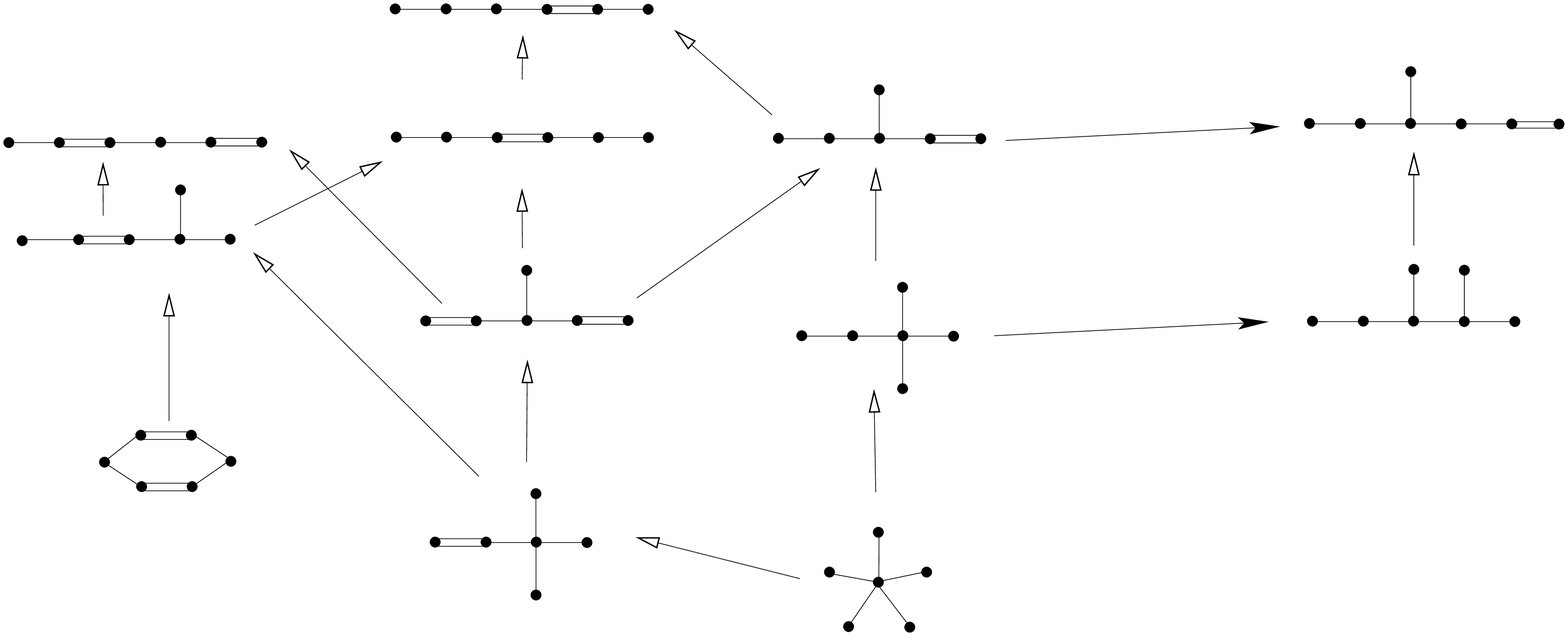,width=0.99\linewidth}
%\end{center}
\end{table}

\begin{table}[!h]
\begin{center}
\caption{Maximal subgroups of groups of rank 8--10 with $\pi/4$ }
\label{4_6-9}
\psfrag{d}{$\rank$}
\psfrag{2}{$3$}
\psfrag{3}{$4$}
\psfrag{4}{$5$}
\psfrag{5}{$6$}
\psfrag{6}{$7$}
\psfrag{7}{$8$}
\psfrag{8}{$9$}
\psfrag{9}{$10$}
\epsfig{file=./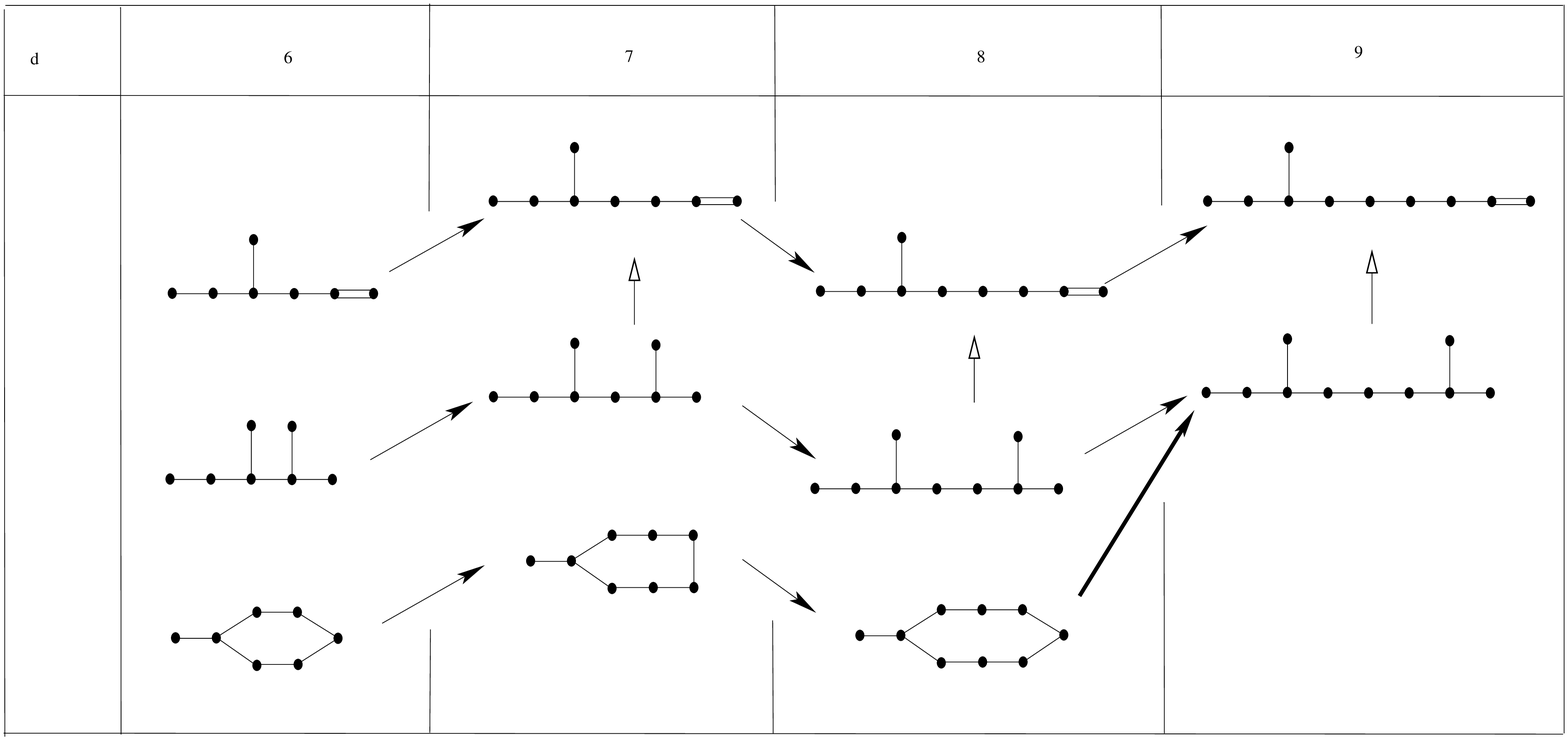,width=0.899\linewidth}
\end{center}
\end{table}

%\clearpage
%\pagebreak

\begin{table}[!h]
\begin{center}
\caption{Maximal subgroups of groups with $\pi/6$.}
\label{6}
\epsfig{file=./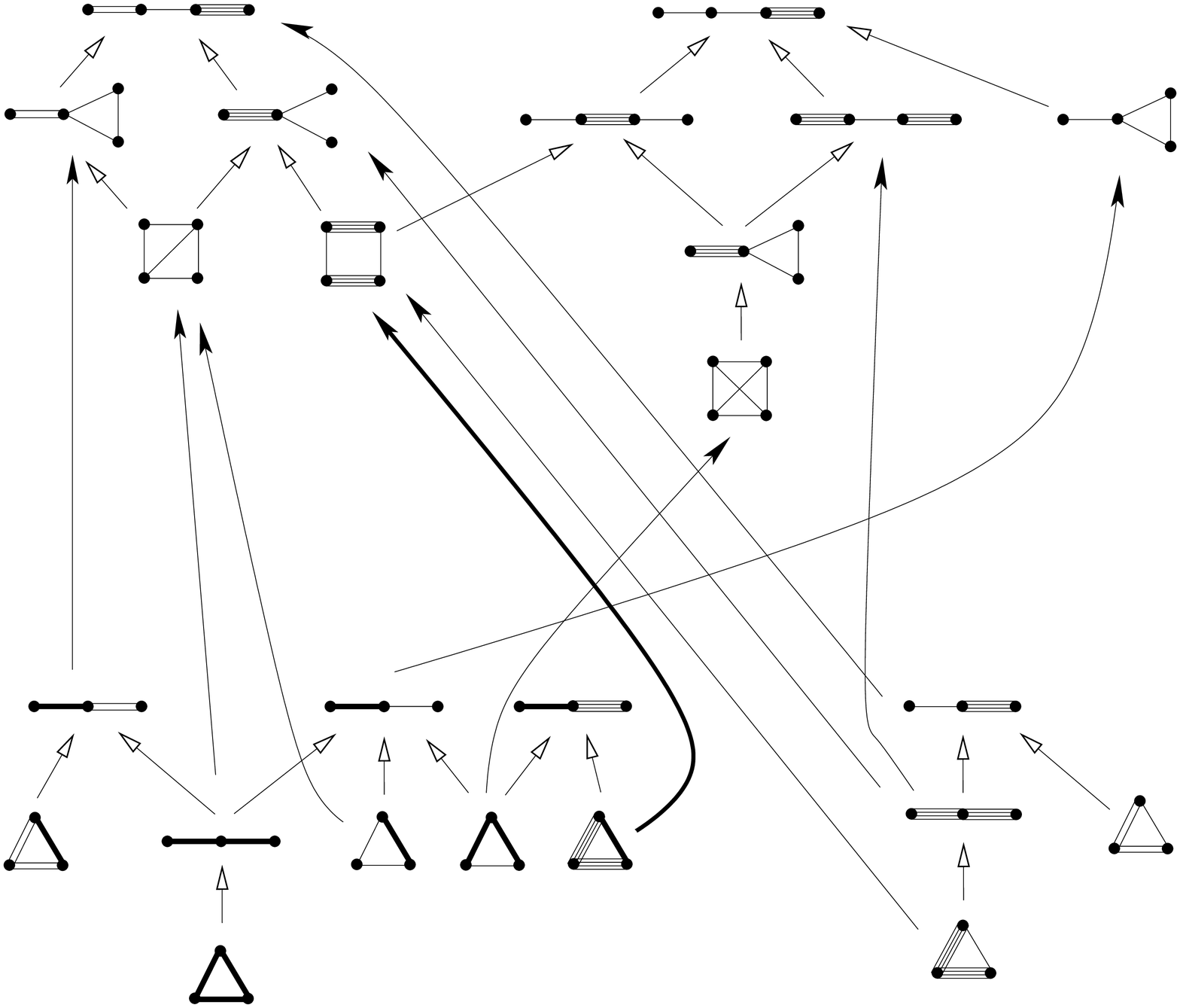,width=0.799\linewidth}
\end{center}
\end{table}

%\pagebreak
\clearpage

\subsection{Subgroups of non-arithmetic over $\Q$ groups}
\label{non-ar}

%In case of non-arithmetic over $\Q$ groups we always use the algorithm (see Section~\ref{algorithm}).
%We obtain Tables~\ref{nonar}--~\ref{6-5}.

\begin{table}[!b]
\begin{center}
\caption{Subgroups of non-arithmetic over $\Q$ groups with $\pi/4$  (and without $\pi/5$
and $\pi/6$)}
\label{nonar}
\epsfig{file=./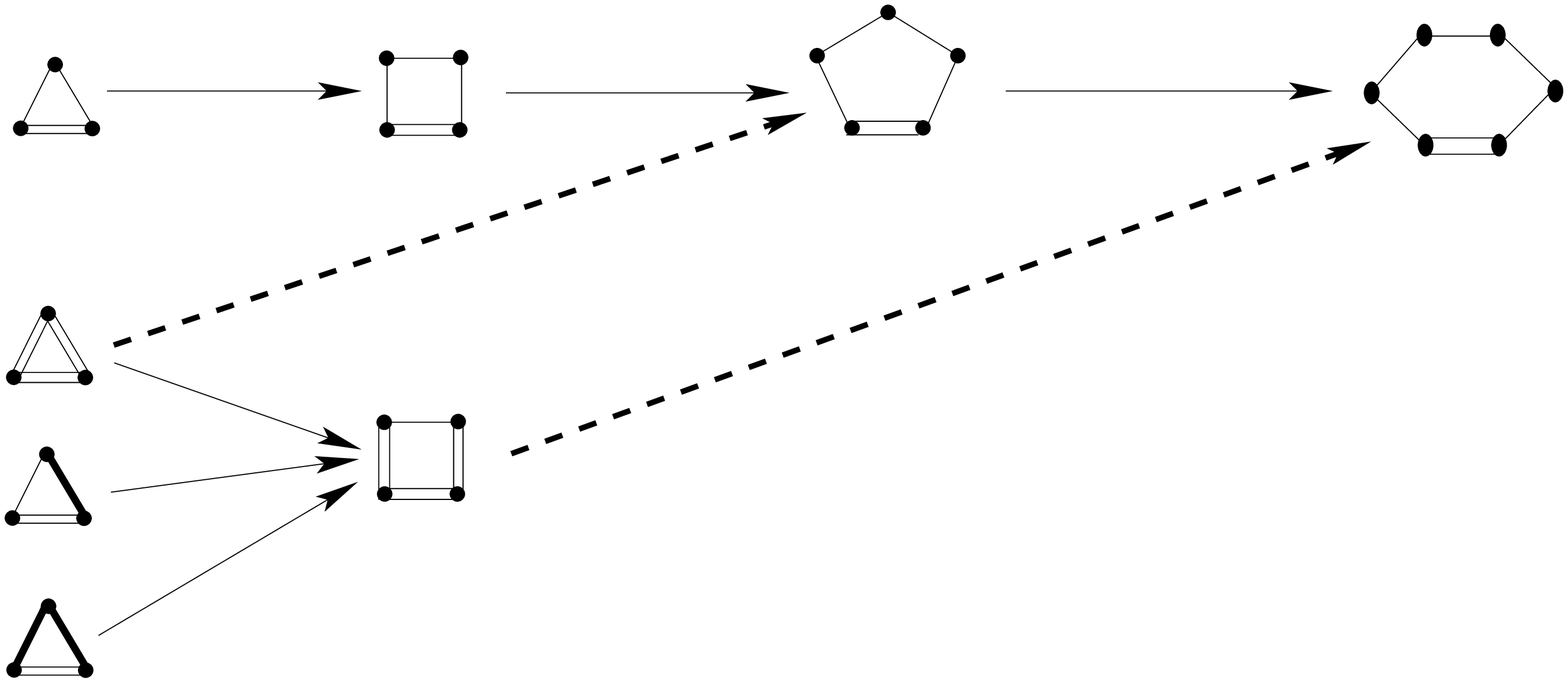,width=0.699\linewidth}
\end{center}
\end{table}

\begin{table}[!b]
\begin{center}
\caption{Subgroups of non-arithmetic over $\Q$ groups with $\pi/5$  (and without $\pi/6$)}
\label{5}
\epsfig{file=./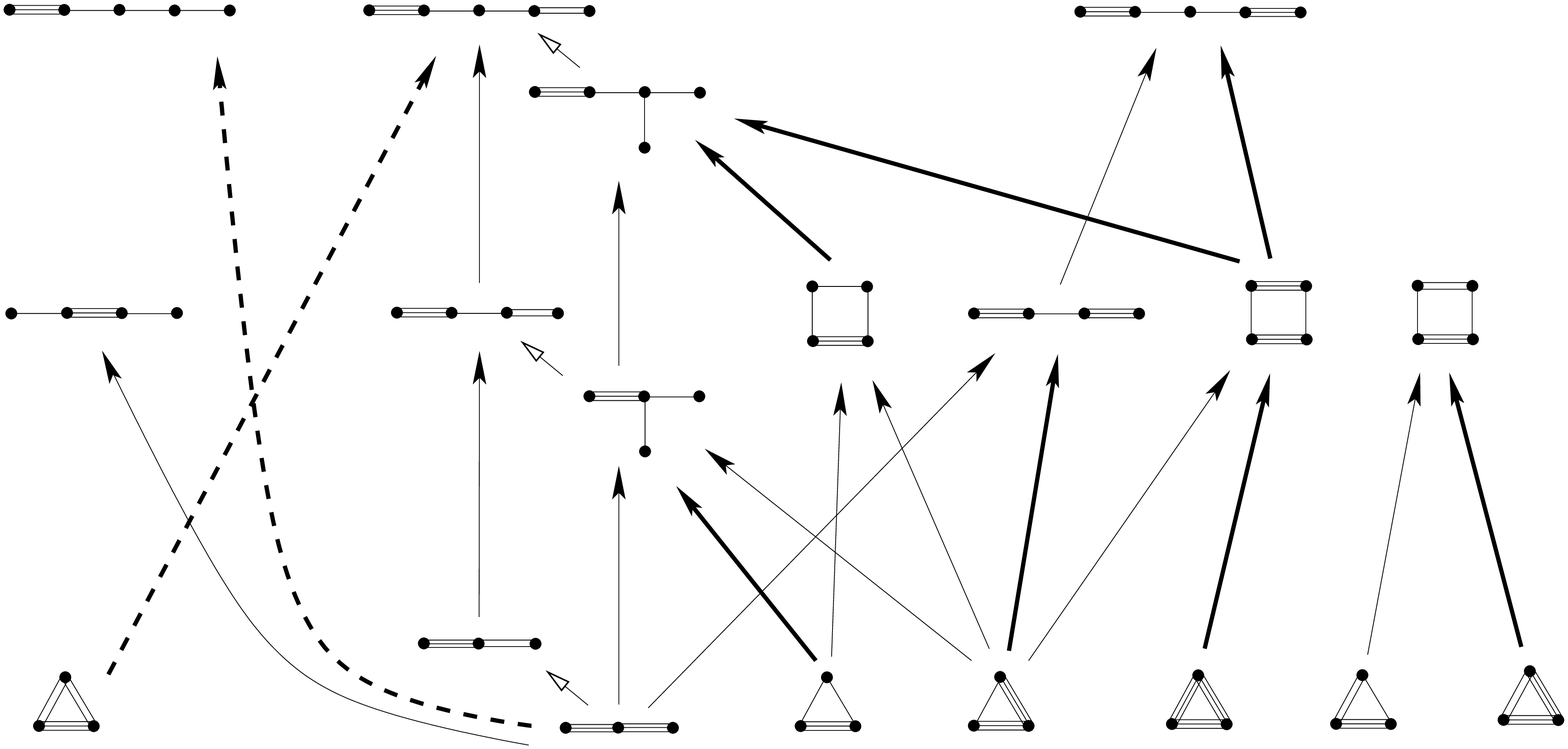,width=0.95\linewidth}
\end{center}
\end{table}

\begin{table}[!b]
\begin{center}
\caption{Subgroups of non-arithmetic over $\Q$ groups with $\pi/6$.}
\label{6-5}
\epsfig{file=./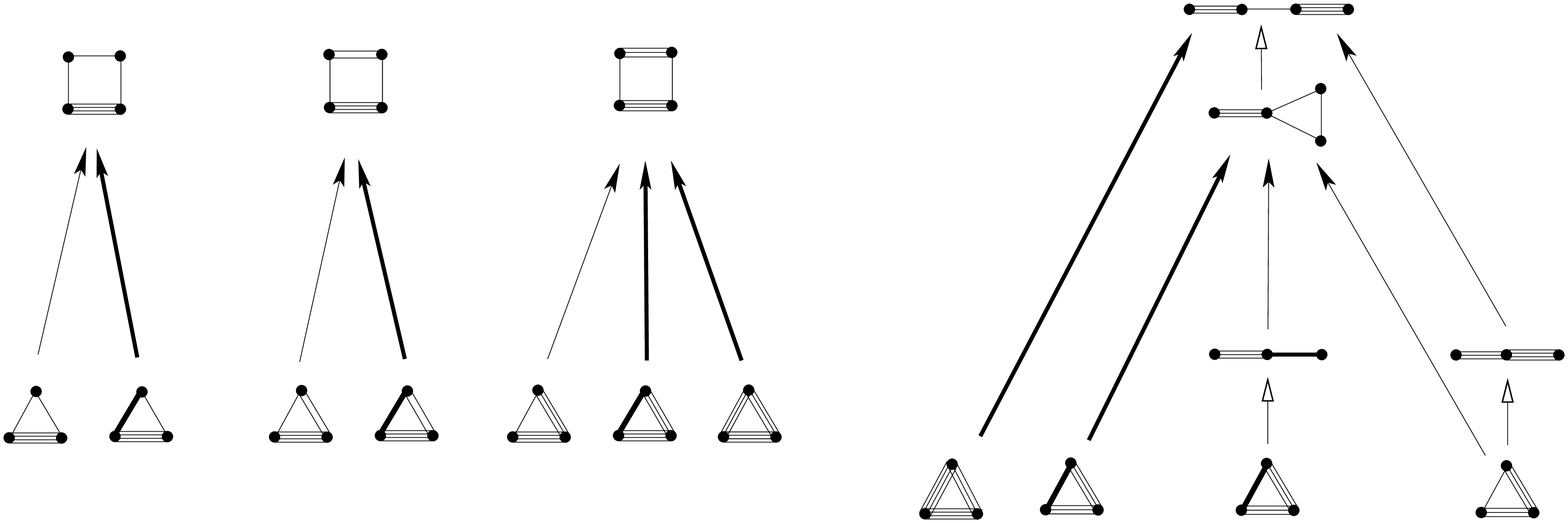,width=0.95\linewidth}
\end{center}
\end{table}

%\begin{table}[!h]
%\begin{center}
%\caption{Subgroups of non-arithmetic groups with $\pi/6$ and  $\pi/5$.}
%\label{6-5}
%\epsfig{file=./,width=0.599\linewidth}
%\end{center}
%\end{table}

\clearpage
\pagebreak

\subsection{Non-visual subgroups} 
\label{non-visual}

In this section we present an explicit embedding for each  non-visual subgroup.

%\pagebreak
%\clearpage

\begin{table}[!h]
\begin{center}
\caption{Non-visual subgroups}
\label{nonvis1}

\begin{tabular}{|c|c|c|c|}
\hline
& $H$ & $G$ & $u_*$ \\
\hline
1& 
\psfrag{u}{\tiny $u_*$}
\psfrag{1}{\tiny $1$}
\psfrag{2}{\tiny $2$}
\psfrag{3}{\tiny $3$}
\psfrag{4}{\tiny $4$}
\psfrag{5}{\tiny $5$}
\psfrag{6}{\tiny $6$}
\psfrag{7}{\tiny $7$}
\psfrag{8}{\tiny $8$}
\psfrag{9}{\tiny $9$}
\begin{tabular}{c}\epsfig{file=./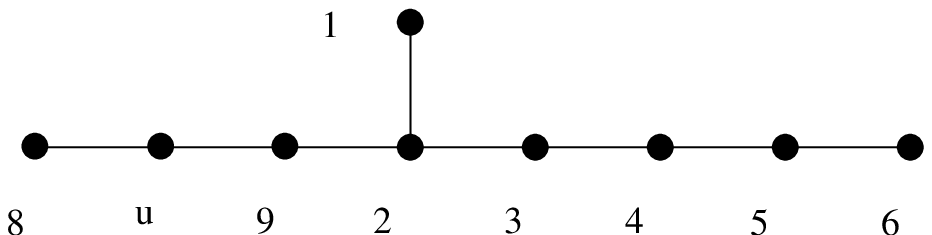,width=0.28\linewidth}\end{tabular}
& 
\psfrag{0}{\tiny $0$}
\psfrag{1}{\tiny $1$}
\psfrag{2}{\tiny $2$}
\psfrag{3}{\tiny $3$}
\psfrag{4}{\tiny $4$}
\psfrag{5}{\tiny $5$}
\psfrag{6}{\tiny $6$}
\psfrag{7}{\tiny $7$}
\psfrag{8}{\tiny $8$}
\psfrag{9}{\tiny $9$}
\begin{tabular}{c}\epsfig{file=./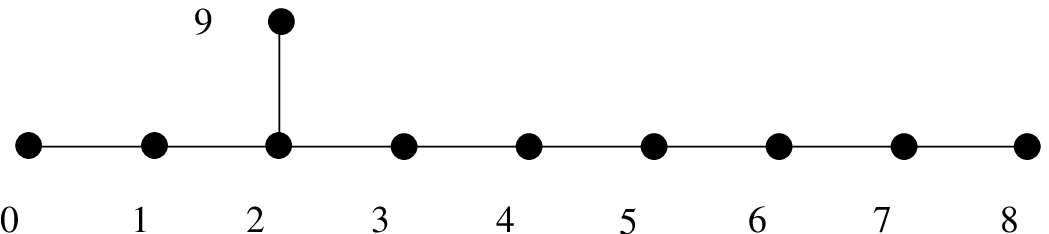,width=0.28\linewidth}\end{tabular}
& $r_7r_6r_5r_4r_3r_2r_1(v_0)$ \\

\hline
2& 
\psfrag{u}{\tiny $u_*$}
\psfrag{1}{\tiny $1$}
\psfrag{2}{\tiny $2$}
\psfrag{3}{\tiny $3$}
\psfrag{4}{\tiny $4$}
\psfrag{5}{\tiny $5$}
\psfrag{6}{\tiny $6$}
\psfrag{7}{\tiny $7$}
\psfrag{8}{\tiny $8$}
\psfrag{9}{\tiny $9$}
\begin{tabular}{c}\epsfig{file=./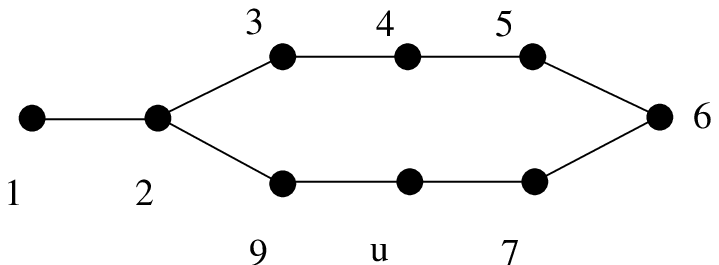,width=0.28\linewidth}\end{tabular}
& 
\psfrag{0}{\tiny $0$}
\psfrag{1}{\tiny $1$}
\psfrag{2}{\tiny $2$}
\psfrag{3}{\tiny $3$}
\psfrag{4}{\tiny $4$}
\psfrag{5}{\tiny $5$}
\psfrag{6}{\tiny $6$}
\psfrag{7}{\tiny $7$}
\psfrag{8}{\tiny $8$}
\psfrag{9}{\tiny $9$}
\begin{tabular}{c}\epsfig{file=./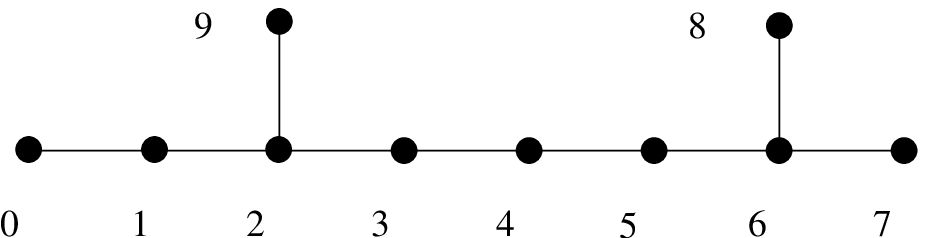,width=0.28\linewidth}\end{tabular}
& $r_0r_1r_2r_3r_4r_5r_6(v_8)$ \\

\hline
3& 
\psfrag{u}{\tiny $u_*$}
\psfrag{0}{\tiny $1$}
\psfrag{1}{\tiny $2$}
\psfrag{2}{\tiny $3$}
\psfrag{3}{\tiny $4$}
\psfrag{4}{\tiny $5$}
\psfrag{5}{\tiny $6$}
\psfrag{9}{\tiny $8$}
\begin{tabular}{c}\epsfig{file=./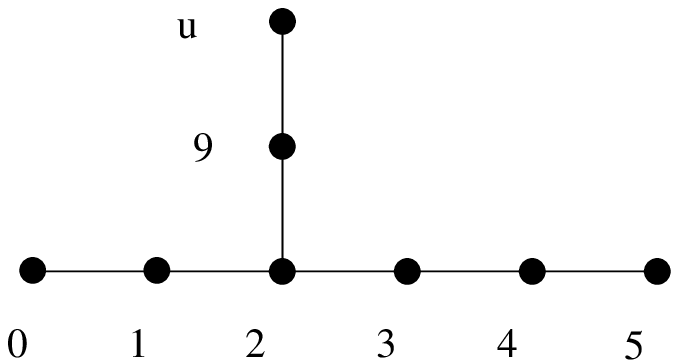,width=0.28\linewidth}\end{tabular}
& 
\psfrag{0}{\tiny $0$}
\psfrag{1}{\tiny $1$}
\psfrag{2}{\tiny $2$}
\psfrag{3}{\tiny $3$}
\psfrag{4}{\tiny $4$}
\psfrag{5}{\tiny $5$}
\psfrag{6}{\tiny $6$}
\psfrag{7}{\tiny $7$}
\psfrag{8}{\tiny $8$}
\psfrag{u}{\tiny $u$}
\begin{tabular}{c}\epsfig{file=./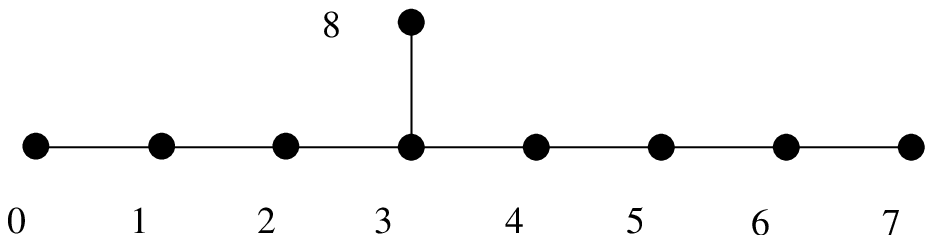,width=0.28\linewidth}\end{tabular}
& $r_0r_1r_2r_3r_4r_5r_6(v_7)$\\ 

\hline
4& 
\psfrag{0}{\tiny $0$}
\psfrag{1}{\tiny $1$}
\psfrag{2}{\tiny $2$}
\psfrag{3}{\tiny $3$}
\psfrag{4}{\tiny $4$}
\psfrag{5}{\tiny $5$}
\psfrag{6}{\tiny $6$}
\psfrag{7}{\tiny $7$}
\psfrag{8}{\tiny $8$}
\psfrag{u}{\tiny $u_*$}
\begin{tabular}{c}\epsfig{file=./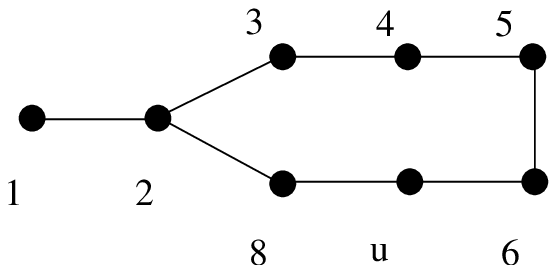,width=0.28\linewidth}\end{tabular}
& 
\psfrag{0}{\tiny $0$}
\psfrag{1}{\tiny $1$}
\psfrag{2}{\tiny $2$}
\psfrag{3}{\tiny $3$}
\psfrag{4}{\tiny $4$}
\psfrag{5}{\tiny $5$}
\psfrag{6}{\tiny $6$}
\psfrag{7}{\tiny $7$}
\psfrag{8}{\tiny $8$}
\psfrag{9}{\tiny $9$}
\begin{tabular}{c}\epsfig{file=./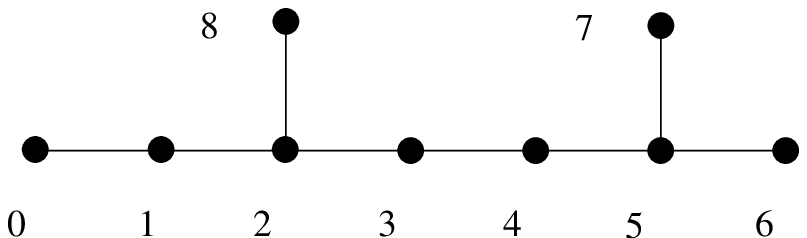,width=0.28\linewidth}\end{tabular}
& $r_0r_1r_2r_3r_4r_5(v_7)$ \\

\hline
5& 
\psfrag{0}{\tiny $7$}
\psfrag{1}{\tiny $6$}
\psfrag{2}{\tiny $5$}
\psfrag{3}{\tiny $4$}
\psfrag{4}{\tiny $3$}
\psfrag{5}{\tiny $2$}
\psfrag{6}{\tiny $8$}
\psfrag{7}{\tiny $u_*$}
\begin{tabular}{c}\epsfig{file=./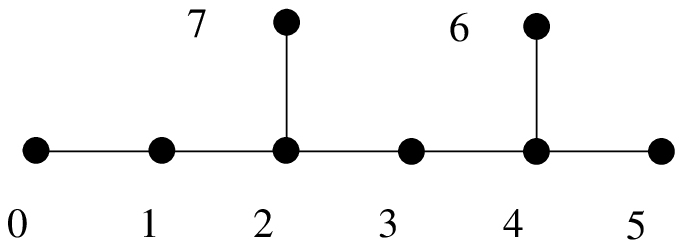,width=0.28\linewidth}\end{tabular}
& 
\psfrag{0}{\tiny $0$}
\psfrag{1}{\tiny $1$}
\psfrag{2}{\tiny $2$}
\psfrag{3}{\tiny $3$}
\psfrag{4}{\tiny $4$}
\psfrag{5}{\tiny $5$}
\psfrag{6}{\tiny $6$}
\psfrag{7}{\tiny $7$}
\psfrag{8}{\tiny $8$}
\begin{tabular}{c}\epsfig{file=./pic/nv_f3.eps,width=0.28\linewidth}\end{tabular}
& $r_1r_2r_3r_8r_4r_3r_2r_1(v_0)$ 
\\
\hline
6& 
\psfrag{0}{\tiny $0$}
\psfrag{1}{\tiny $1$}
\psfrag{2}{\tiny $2$}
\psfrag{3}{\tiny $3$}
\psfrag{4}{\tiny $4$}
\psfrag{5}{\tiny $5$}
\psfrag{6}{\tiny $6$}
\psfrag{u}{\tiny $u_*$}
\begin{tabular}{c}\epsfig{file=./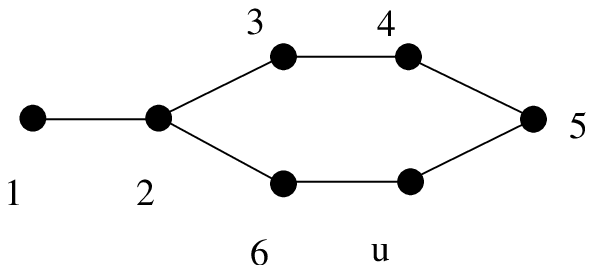,width=0.28\linewidth}\end{tabular}
& 
\psfrag{0}{\tiny $0$}
\psfrag{1}{\tiny $1$}
\psfrag{2}{\tiny $2$}
\psfrag{3}{\tiny $3$}
\psfrag{4}{\tiny $4$}
\psfrag{5}{\tiny $5$}
\psfrag{9}{\tiny $6$}
\psfrag{u}{\tiny $7$}
\begin{tabular}{c}\epsfig{file=./pic/nv_p3.eps,width=0.28\linewidth}\end{tabular}
& $r_0r_1r_2r_3r_4(v_5)$ \\

\hline
7& 
\psfrag{1}{\tiny $0$}
\psfrag{2}{\tiny $1$}
\psfrag{3}{\tiny $2$}
\psfrag{4}{\tiny $3$}
\psfrag{5}{\tiny $4$}
\psfrag{6}{\tiny $u_*$}
\psfrag{u}{\tiny $5$}
\begin{tabular}{c}\epsfig{file=./pic/nv_p6.eps,width=0.28\linewidth}\end{tabular}
& 
\psfrag{0}{\tiny $0$}
\psfrag{1}{\tiny $1$}
\psfrag{2}{\tiny $2$}
\psfrag{3}{\tiny $3$}
\psfrag{4}{\tiny $4$}
\psfrag{5}{\tiny $5$}
\psfrag{6}{\tiny $6$}
\psfrag{7}{\tiny $7$}
\begin{tabular}{c}\epsfig{file=./pic/nv_f7.eps,width=0.28\linewidth}\end{tabular}
& $r_6r_4r_3r_2(v_7)$ \\

\hline
8& 
\psfrag{0}{\tiny $5$}
\psfrag{1}{\tiny $4$}
\psfrag{2}{\tiny $3$}
\psfrag{3}{\tiny $2$}
\psfrag{4}{\tiny $1$}
\psfrag{5}{\tiny $6$}
\psfrag{6}{\tiny $u_*$}
\begin{tabular}{c}\epsfig{file=./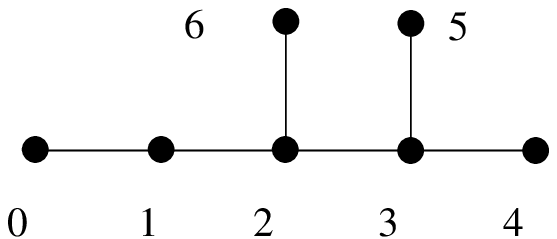,width=0.28\linewidth}\end{tabular}
& 
\psfrag{0}{\tiny $0$}
\psfrag{1}{\tiny $1$}
\psfrag{2}{\tiny $2$}
\psfrag{3}{\tiny $3$}
\psfrag{4}{\tiny $4$}
\psfrag{5}{\tiny $5$}
\psfrag{9}{\tiny $6$}
\psfrag{u}{\tiny $7$}
\begin{tabular}{c}\epsfig{file=./pic/nv_p3.eps,width=0.28\linewidth}\end{tabular}
& $r_0r_1r_2r_6(v_7)$ \\

\hline
9& 
\psfrag{1}{\tiny $1$}
\psfrag{2}{\tiny $2$}
\psfrag{3}{\tiny $3$}
\psfrag{4}{\tiny $4$}
\psfrag{u}{\tiny $u_*$}
\psfrag{6}{\tiny $6$}
\begin{tabular}{c}\epsfig{file=./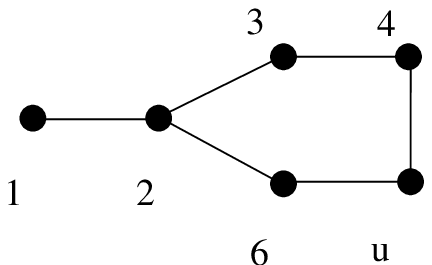,width=0.28\linewidth}\end{tabular}
& 
\psfrag{0}{\tiny $0$}
\psfrag{1}{\tiny $1$}
\psfrag{2}{\tiny $2$}
\psfrag{3}{\tiny $3$}
\psfrag{4}{\tiny $4$}
\psfrag{5}{\tiny $5$}
\psfrag{6}{\tiny $6$}
\begin{tabular}{c}\epsfig{file=./pic/nv_f9.eps,width=0.28\linewidth}\end{tabular}
& $r_0r_1r_2r_3(v_5)$ \\

\hline
10& 
\psfrag{0}{\tiny $0$}
\psfrag{u}{\tiny $u_*$}
\psfrag{2}{\tiny $2$}
\psfrag{3}{\tiny $3$}
\psfrag{4}{\tiny $4$}
\psfrag{5}{\tiny $5$}
\begin{tabular}{c}\epsfig{file=./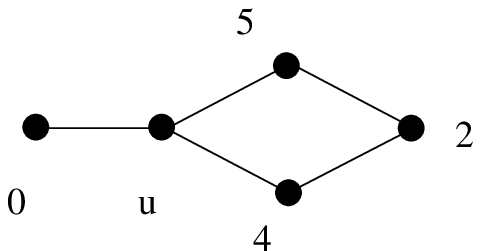,width=0.28\linewidth}\end{tabular}
& 
\psfrag{0}{\tiny $0$}
\psfrag{1}{\tiny $1$}
\psfrag{2}{\tiny $2$}
\psfrag{3}{\tiny $3$}
\psfrag{4}{\tiny $4$}
\psfrag{5}{\tiny $5$}
\begin{tabular}{c}\epsfig{file=./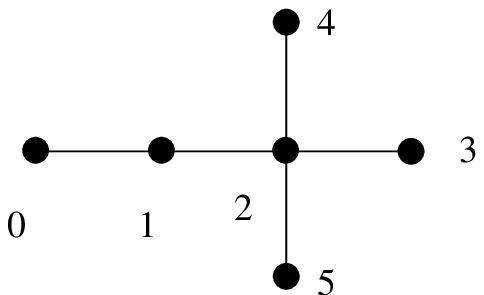,width=0.28\linewidth}\end{tabular}
& $r_1r_2(v_3)$ \\

\hline
11& 
\psfrag{0}{\tiny $0$}
\psfrag{u}{\tiny $u_*$}
\psfrag{2}{\tiny $2$}
\psfrag{3}{\tiny $3$}
\psfrag{4}{\tiny $4$}
\psfrag{5}{\tiny $5$}
\begin{tabular}{c}\epsfig{file=./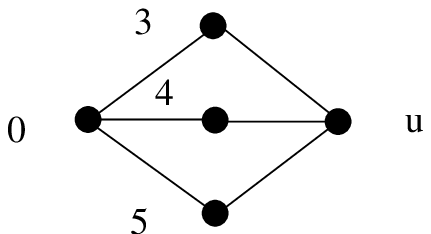,width=0.28\linewidth}\end{tabular}
& 
\psfrag{0}{\tiny $0$}
\psfrag{1}{\tiny $1$}
\psfrag{2}{\tiny $2$}
\psfrag{3}{\tiny $3$}
\psfrag{4}{\tiny $4$}
\psfrag{5}{\tiny $5$}
\begin{tabular}{c}\epsfig{file=./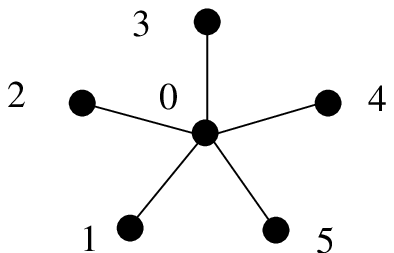,width=0.28\linewidth}\end{tabular}
& $r_1r_0(v_2)$ \\

\hline
\end{tabular}
\end{center}
\end{table}

\pagebreak
\clearpage
\addtocounter{table}{-1}

\begin{table}[!h]
\begin{center}
\caption{ Cont.}
\label{nonvis2}
\begin{tabular}{|c|c|c|c|}
\hline
& $H$ & $G$ & $u_*$ \\
\hline

\hline
12& 
\psfrag{0}{\tiny $0$}
\psfrag{1}{\tiny $1$}
\psfrag{2}{\tiny $2$}
\psfrag{u}{\tiny $u_*$}
\psfrag{4}{\tiny $4$}
\psfrag{5}{\tiny $5$}
\begin{tabular}{c}\epsfig{file=./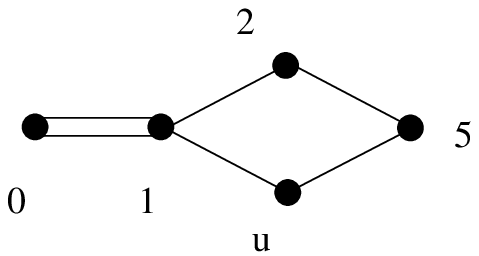,width=0.2\linewidth}\end{tabular}
& 
\psfrag{0}{\tiny $0$}
\psfrag{1}{\tiny $1$}
\psfrag{2}{\tiny $2$}
\psfrag{3}{\tiny $3$}
\psfrag{4}{\tiny $4$}
\psfrag{5}{\tiny $5$}
\begin{tabular}{c}\raisebox{-0pt}[47pt][0pt]{\epsfig{file=./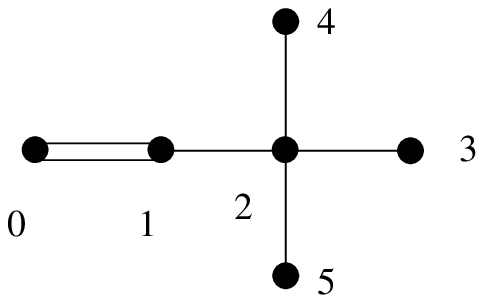,width=0.2\linewidth}}\end{tabular}
& $r_4r_2(v_3)$ \\

\hline
13& 
\psfrag{0}{\tiny $0$}
\psfrag{1}{\tiny $1$}
\psfrag{2}{\tiny $2$}
\psfrag{u}{\tiny $u_*$}
\psfrag{4}{\tiny $4$}
\begin{tabular}{c}\epsfig{file=./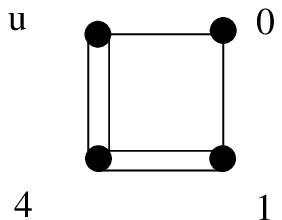,width=0.2\linewidth}\end{tabular}
& 
\psfrag{0}{\tiny $0$}
\psfrag{1}{\tiny $1$}
\psfrag{2}{\tiny $2$}
\psfrag{3}{\tiny $3$}
\psfrag{4}{\tiny $4$}
\begin{tabular}{c}\raisebox{-0pt}[47pt][0pt]{\epsfig{file=./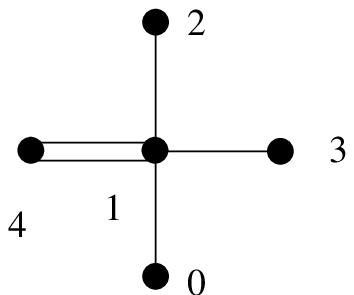,width=0.2\linewidth}}\end{tabular}
& $r_2r_1(v_0)$ \\

\hline
14& 
\psfrag{0}{\tiny $0$}
\psfrag{1}{\tiny $v_*$}
\psfrag{2}{\tiny $4$}
\psfrag{3}{\tiny $u_*$}
\begin{tabular}{c}\epsfig{file=./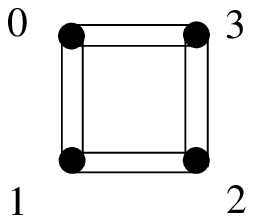,width=0.2\linewidth}\end{tabular}
& 
\psfrag{0}{\tiny $0$}
\psfrag{1}{\tiny $1$}
\psfrag{2}{\tiny $2$}
\psfrag{3}{\tiny $3$}
\psfrag{4}{\tiny $4$}
\psfrag{5}{\tiny $5$}
\begin{tabular}{c}\raisebox{-0pt}[40pt][0pt]{\epsfig{file=./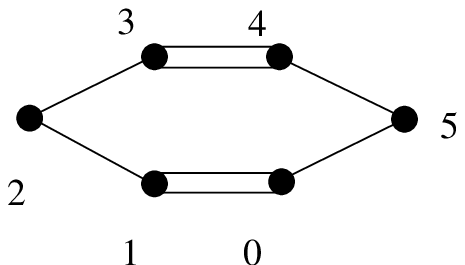,width=0.2\linewidth}}\end{tabular}
&
\begin{tabular}{l}
$v_*=r_1r_2(v_3)$\\
$u_*=r_3r_1r_4r_5r_0r_3r_2(v_1)$
\end{tabular}\\

\hline
15& 
\psfrag{0}{\tiny $v_*$}
\psfrag{1}{\tiny $1$}
\psfrag{u}{\tiny $u_*$}
\begin{tabular}{c}\epsfig{file=./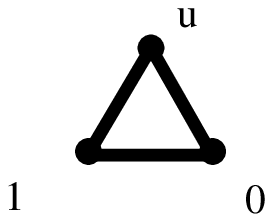,width=0.2\linewidth}\end{tabular}
& 
\psfrag{0}{\tiny $0$}
\psfrag{1}{\tiny $1$}
\psfrag{2}{\tiny $2$}
\psfrag{3}{\tiny $3$}
\begin{tabular}{c}\epsfig{file=./pic/nv_f15.eps,width=0.2\linewidth}\end{tabular}
&
\begin{tabular}{l}
$v_*=r_2(v_3)$\\
$u_*=r_0r_2(v_1)$
\end{tabular}\\
 
\hline
16& 
\psfrag{0}{\tiny $1$}
\psfrag{1}{\tiny $2$}
\psfrag{u}{\tiny $u_*$}
\begin{tabular}{c}\epsfig{file=./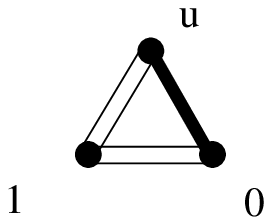,width=0.2\linewidth}\end{tabular}
& 
\psfrag{0}{\tiny $0$}
\psfrag{1}{\tiny $1$}
\psfrag{2}{\tiny $2$}
\psfrag{3}{\tiny $3$}
\begin{tabular}{c}\epsfig{file=./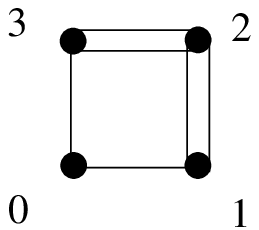,width=0.2\linewidth}\end{tabular}
& $r_0r_1r_3(v_2)$ \\ 

\hline
%\hline
17& 
\psfrag{0}{\tiny $2$}
\psfrag{1}{\tiny $u_*$}
\psfrag{u}{\tiny $3$}
\psfrag{k}{\tiny }
\begin{tabular}{c}\epsfig{file=./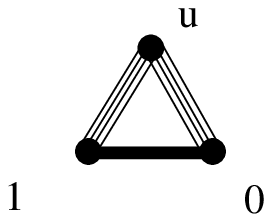,width=0.2\linewidth}\end{tabular}
& 
\psfrag{0}{\tiny $0$}
\psfrag{1}{\tiny $1$}
\psfrag{2}{\tiny $2$}
\psfrag{3}{\tiny $3$}
\psfrag{k}{\tiny }
\begin{tabular}{c}\epsfig{file=./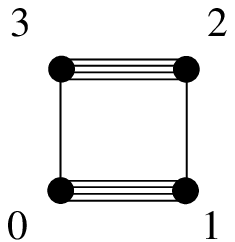,width=0.2\linewidth}\end{tabular}
& $r_0r_3r_2(v_1)$ \\

\hline 
&&&\\
\hline
18& 
\psfrag{0}{\tiny $0$}
\psfrag{1}{\tiny $1$}
\psfrag{u}{\tiny $u_*$}
\begin{tabular}{c}\epsfig{file=./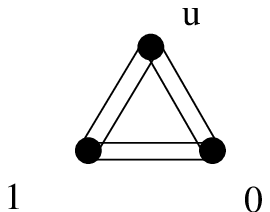,width=0.2\linewidth}\end{tabular}
& 
\psfrag{0}{\tiny $0$}
\psfrag{1}{\tiny $1$}
\psfrag{2}{\tiny $2$}
\psfrag{3}{\tiny $3$}
\psfrag{4}{\tiny $4$}
\psfrag{5}{\tiny $5$}
\begin{tabular}{c}\epsfig{file=./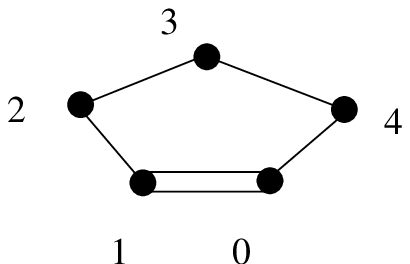,width=0.2\linewidth}\end{tabular}
& $r_2r_3r_4r_1r_2r_0(v_1)$ \\
 
\hline
19& 
\psfrag{0}{\tiny $1$}
\psfrag{1}{\tiny $2$}
\psfrag{2}{\tiny $3$}
\psfrag{3}{\tiny $u_*$}
\begin{tabular}{c}\epsfig{file=./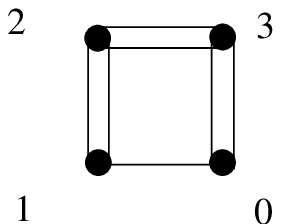,width=0.2\linewidth}\end{tabular}
& 
\psfrag{0}{\tiny $0$}
\psfrag{1}{\tiny $1$}
\psfrag{2}{\tiny $2$}
\psfrag{3}{\tiny $3$}
\psfrag{4}{\tiny $4$}
\psfrag{5}{\tiny $5$}
\begin{tabular}{c}\epsfig{file=./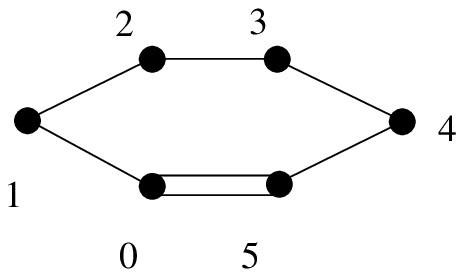,width=0.2\linewidth}\end{tabular}
& $r_0r_5r_4r_1r_0r_2r_3r_2(v_1)$ \\

\hline
20& 
\psfrag{0}{\tiny $2$}
\psfrag{1}{\tiny $1$}
\psfrag{u}{\tiny $u_*$}
\begin{tabular}{c}\epsfig{file=./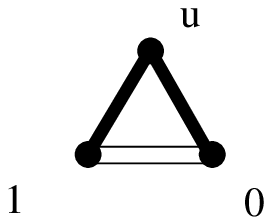,width=0.2\linewidth}\end{tabular}
& 
\psfrag{0}{\tiny $0$}
\psfrag{1}{\tiny $1$}
\psfrag{2}{\tiny $2$}
\psfrag{3}{\tiny $3$}
\begin{tabular}{c}\epsfig{file=./pic/nv_f19.eps,width=0.2\linewidth}\end{tabular}
& $r_0r_1r_3(v_2)$ \\

\hline
\end{tabular}
\end{center}
\end{table}

\pagebreak
\clearpage
\addtocounter{table}{-1}

\begin{table}[htb!]
\begin{center}
\caption{ Cont.}
\label{nonvis3}
\begin{tabular}{|c|c|c|c|}
\hline
& $H$ & $G$ & $u_*$ \\
\hline

%\hline
21& 
\psfrag{0}{\tiny $3$}
\psfrag{1}{\tiny $2$}
\psfrag{u}{\tiny $u_*$}
\psfrag{k}{\tiny $k$}
\begin{tabular}{c}\epsfig{file=./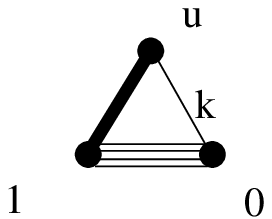,width=0.2\linewidth}\end{tabular}
& 
\psfrag{0}{\tiny $0$}
\psfrag{1}{\tiny $1$}
\psfrag{2}{\tiny $2$}
\psfrag{3}{\tiny $3$}
\psfrag{k}{\tiny $k=4,5,6$}
\begin{tabular}{c}\raisebox{-0pt}[35pt][0pt]{\epsfig{file=./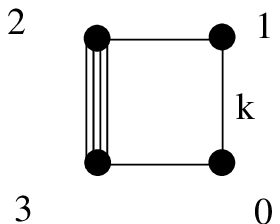,width=0.2\linewidth}}\end{tabular}
& $r_0r_3r_2(v_1)$ \\

\hline
22& 
\psfrag{0}{\tiny $3$}
\psfrag{1}{\tiny $2$}
\psfrag{u}{\tiny $u_*$}
\psfrag{k}{\tiny $k$}
\begin{tabular}{c}\epsfig{file=./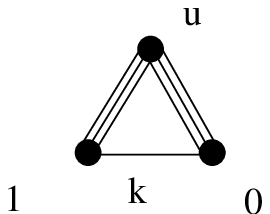,width=0.2\linewidth}\end{tabular}
& 
\psfrag{0}{\tiny $0$}
\psfrag{1}{\tiny $1$}
\psfrag{2}{\tiny $2$}
\psfrag{3}{\tiny $3$}
\psfrag{k}{\tiny $k=3,4,5,6$}
\begin{tabular}{c}\raisebox{-0pt}[35pt][0pt]{\epsfig{file=./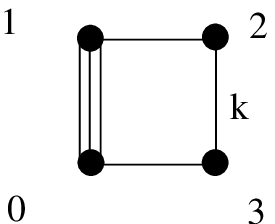,width=0.2\linewidth}}\end{tabular}
& $r_0r_1(v_0)$ \\

\hline
23& 
\psfrag{0}{\tiny $0$}
\psfrag{1}{\tiny $1$}
\psfrag{u}{\tiny $u_*$}
\begin{tabular}{c}\epsfig{file=./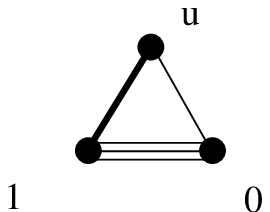,width=0.2\linewidth}\end{tabular}
& 
\psfrag{0}{\tiny $0$}
\psfrag{1}{\tiny $1$}
\psfrag{2}{\tiny $2$}
\psfrag{3}{\tiny $3$}
\begin{tabular}{c}\epsfig{file=./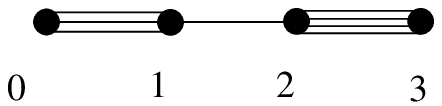,width=0.2\linewidth}\end{tabular}
& $r_2r_3r_1r_0r_1(v_2)$ \\

\hline
24& 
\psfrag{0}{\tiny $0$}
\psfrag{1}{\tiny $1$}
\psfrag{u}{\tiny $u_*$}
\begin{tabular}{c}\epsfig{file=./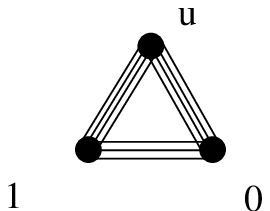,width=0.2\linewidth}\end{tabular}
& 
\psfrag{0}{\tiny $0$}
\psfrag{1}{\tiny $1$}
\psfrag{2}{\tiny $2$}
\psfrag{3}{\tiny $3$}
\begin{tabular}{c}\epsfig{file=./pic/nv_f22.eps,width=0.2\linewidth}\end{tabular}
&  $r_2r_3r_1r_0r_1r_2(v_3)$ \\

\hline

%\hline
25& 
\psfrag{u}{\tiny $u_*$}
\psfrag{2}{\tiny $2$}
\psfrag{3}{\tiny $3$}
\psfrag{4}{\tiny $4$}
\begin{tabular}{c}\epsfig{file=./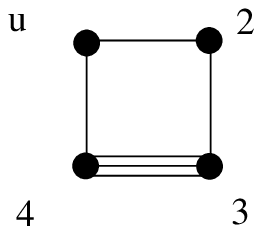,width=0.2\linewidth}\end{tabular}
& 
\psfrag{0}{\tiny $0$}
\psfrag{1}{\tiny $1$}
\psfrag{2}{\tiny $2$}
\psfrag{3}{\tiny $3$}
\psfrag{4}{\tiny $4$}
\begin{tabular}{c}\epsfig{file=./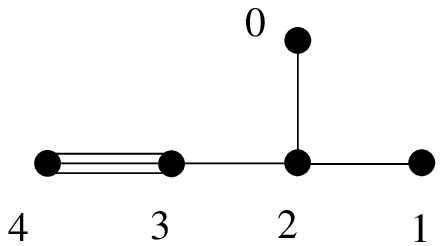,width=0.2\linewidth}\end{tabular}
&  $r_1r_2r_3r_4r_3r_2(v_0)$ \\

\hline
26& 
\psfrag{u}{\tiny $u_*$}
\psfrag{2}{\tiny $2$}
\psfrag{3}{\tiny $3$}
\psfrag{4}{\tiny $4$}
\begin{tabular}{c}\epsfig{file=./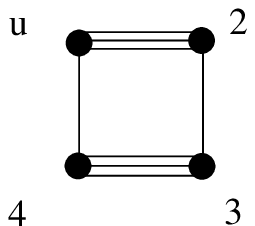,width=0.2\linewidth}\end{tabular}
& 
\psfrag{0}{\tiny $0$}
\psfrag{1}{\tiny $1$}
\psfrag{2}{\tiny $2$}
\psfrag{3}{\tiny $3$}
\psfrag{4}{\tiny $4$}
\begin{tabular}{c}\epsfig{file=./pic/nv_f24.eps,width=0.2\linewidth}\end{tabular}
&  $r_0r_1r_2r_3r_4r_3r_2(v_0)$ \\

\hline
27& 
\psfrag{u}{\tiny $u_*$}
\psfrag{2}{\tiny $2$}
\psfrag{3}{\tiny $3$}
\psfrag{4}{\tiny $4$}
\begin{tabular}{c}\epsfig{file=./pic/nv_p25.eps,width=0.2\linewidth}\end{tabular}
& 
\psfrag{0}{\tiny $0$}
\psfrag{1}{\tiny $1$}
\psfrag{2}{\tiny $2$}
\psfrag{3}{\tiny $3$}
\psfrag{4}{\tiny $4$}
\begin{tabular}{c}\epsfig{file=./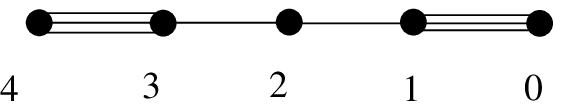,width=0.2\linewidth}\end{tabular}
&  $r_1r_2r_3r_4r_3r_2r_0(v_1)$ \\

\hline
28& 
\psfrag{3}{\tiny $3$}
\psfrag{4}{\tiny $u_*$}
\psfrag{u}{\tiny $1$}
\begin{tabular}{c}\epsfig{file=./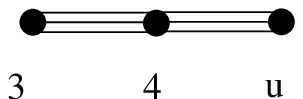,width=0.2\linewidth}\end{tabular}
& 
\psfrag{0}{\tiny $0$}
\psfrag{1}{\tiny $1$}
\psfrag{2}{\tiny $2$}
\psfrag{3}{\tiny $3$}
\psfrag{4}{\tiny $4$}
\begin{tabular}{c}\epsfig{file=./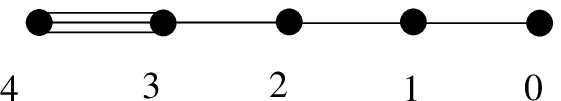,width=0.2\linewidth}\end{tabular}
&  $r_4r_2r_3r_4r_3r_2r_1(v_0)$ \\

\hline
29& 
\psfrag{u}{\tiny $0$}
\psfrag{1}{\tiny $1$}
\psfrag{0}{\tiny $u_*$}
\begin{tabular}{c}\epsfig{file=./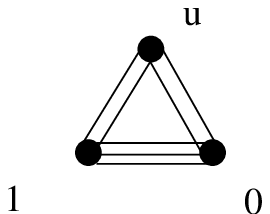,width=0.2\linewidth}\end{tabular}
& 
\psfrag{0}{\tiny $0$}
\psfrag{1}{\tiny $1$}
\psfrag{2}{\tiny $2$}
\psfrag{3}{\tiny $3$}
\psfrag{4}{\tiny $4$}
\begin{tabular}{c}\epsfig{file=./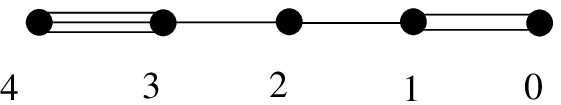,width=0.2\linewidth}\end{tabular}
&  $r_2r_3r_4r_3r_2r_4r_3r_4r_3r_2(v_1)$ \\

\hline
30& 
\psfrag{u}{\tiny $2$}
\psfrag{0}{\tiny $u_*$}
\psfrag{1}{\tiny $1$}
\begin{tabular}{c}\epsfig{file=./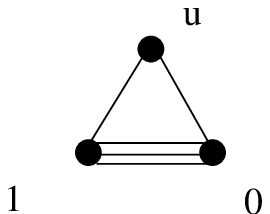,width=0.2\linewidth}\end{tabular}
& 
\psfrag{0}{\tiny $0$}
\psfrag{1}{\tiny $1$}
\psfrag{2}{\tiny $2$}
\psfrag{3}{\tiny $3$}
\begin{tabular}{c}\epsfig{file=./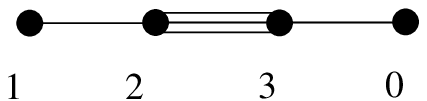,width=0.2\linewidth}\end{tabular}
&  $r_3r_2r_3(v_0)$ \\

\hline
31& 
\psfrag{u}{\tiny $3$}
\psfrag{3}{\tiny $0$}
\psfrag{4}{\tiny $u_*$}
\begin{tabular}{c}\epsfig{file=./pic/nv_p27.eps,width=0.2\linewidth}\end{tabular}
& 
\psfrag{0}{\tiny $3$}
\psfrag{1}{\tiny $0$}
\psfrag{2}{\tiny $1$}
\psfrag{3}{\tiny $2$}
\begin{tabular}{c}\epsfig{file=./pic/nv_f29.eps,width=0.2\linewidth}\end{tabular}
&  $r_1r_2(v_1)$ \\

\hline
32& 
\psfrag{3}{\tiny $2$}
\psfrag{4}{\tiny $3$}
\psfrag{u}{\tiny $u_*$}
\begin{tabular}{c}\epsfig{file=./pic/nv_p27.eps,width=0.2\linewidth}\end{tabular}
& 
\psfrag{0}{\tiny $0$}
\psfrag{1}{\tiny $1$}
\psfrag{2}{\tiny $2$}
\psfrag{3}{\tiny $3$}
\begin{tabular}{c}\epsfig{file=./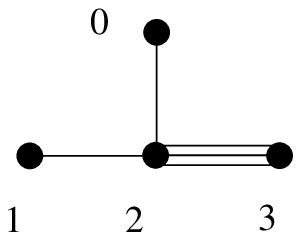,width=0.2\linewidth}\end{tabular}
&  $r_1r_2(v_0)$ \\

\hline
33& 
\psfrag{1}{\tiny $3$}
\psfrag{u}{\tiny $2$}
\psfrag{0}{\tiny $u_*$}
\begin{tabular}{c}\epsfig{file=./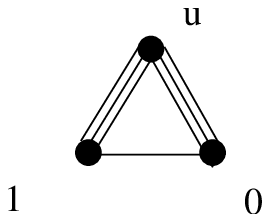,width=0.2\linewidth}\end{tabular}
& 
\psfrag{0}{\tiny $0$}
\psfrag{1}{\tiny $1$}
\psfrag{2}{\tiny $2$}
\psfrag{3}{\tiny $3$}
\begin{tabular}{c}\epsfig{file=./pic/nv_f31.eps,width=0.2\linewidth}\end{tabular}
&  $r_0r_1r_2r_3r_2(v_0)$ \\

\hline
34& 
\psfrag{0}{\tiny $3$}
\psfrag{1}{\tiny $2$}
\psfrag{u}{\tiny $u_*$}
\begin{tabular}{c}\epsfig{file=./pic/nv_p29.eps,width=0.2\linewidth}\end{tabular}
& 
\psfrag{0}{\tiny $0$}
\psfrag{1}{\tiny $1$}
\psfrag{2}{\tiny $2$}
\psfrag{3}{\tiny $3$}
\begin{tabular}{c}\epsfig{file=./pic/nv_f31.eps,width=0.2\linewidth}\end{tabular}
&  $r_1r_2r_3r_2(v_0)$ \\

\hline
35& 
\psfrag{0}{\tiny $2$}
\psfrag{1}{\tiny $1$}
\psfrag{u}{\tiny $u_*$}
\begin{tabular}{c}\epsfig{file=./pic/nv_p32.eps,width=0.2\linewidth}\end{tabular}
& 
\psfrag{0}{\tiny $0$}
\psfrag{1}{\tiny $1$}
\psfrag{2}{\tiny $2$}
\psfrag{3}{\tiny $3$}
\begin{tabular}{c}\epsfig{file=./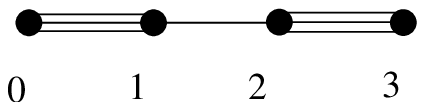,width=0.2\linewidth}\end{tabular}
&  $r_0r_1r_2r_3r_2(v_0)$ \\

\hline
\end{tabular}
\end{center}
\end{table}

%\begin{table}[!h]
%\begin{center}
%\caption{Non-visual subgroups}
%\label{nonvis1}
%\input{tab/nonvis1.txt}
%\end{center}
%\end{table}

\pagebreak
\clearpage

\section{Back to subalgebras}
\label{back to subalgebras}

In this section we show that each simplicial subgroup of an arithmetic over $\Q$ simplicial group corresponds to
some regular hyperbolic subalgebra of a hyperbolic Kac-Moody algebra.

Recall that a reflection group $K_1$ is {\rr}-isomorphic to a reflection group $K_2$ (we denote this by $K_1\r-equiv K_2$) 
if there is an isomorphism of the groups $K_1$ and $K_2$ preserving the set of reflections and parabolic elements.

As before, we denote by $\Delta_{\g}$ the root system of Kac-Moody algebra $\g$, and by $W(\Delta_{\g})$ the Weyl group of $\Delta_{\g}$.

\begin{theorem}
\label{subalg for each subgr}
Let $H\subset G$ be a simplicial subgroup of an arithmetic over $\Q$ simplicial group.
There exist a Kac-Moody algebra $\g$ and a regular subalgebra $\h\subset \g$ such that 
$W(\Delta_{\g})\r-equiv G$ and $W(\Delta_{\h})\r-equiv H$. 

\end{theorem}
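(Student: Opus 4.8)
The plan is to reduce the statement to the maximal case, which has already been settled, and then to glue the resulting inclusions along a chain. Assume first that $\rank H\ge 4$ (the cases $\rank H\le 3$ are elementary: a rank $2$ group is an infinite dihedral group, hence a subgroup of any simplicial group, and rank $3$ is covered by the results quoted in the introduction and in~\cite{treug},~\cite{T}). By Theorem~\ref{classification of subgroups} the inclusion $H\subset G$ refines to a chain of maximal embeddings $H=K_0\subset K_1\subset\dots\subset K_l=G$ drawn from Tables~\ref{3}--\ref{6-5}. Since $G$ is arithmetic over $\Q$, there is a hyperbolic root system $\Delta_l$ with $W(\Delta_l)\r-equiv G$. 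I would then construct, going down the chain, hyperbolic root systems $\Delta_l\supset\Delta_{l-1}\supset\dots\supset\Delta_0$ with $\Delta_i$ a root subsystem of $\Delta_{i+1}$ and $W(\Delta_i)\r-equiv K_i$, restricting at each stage to a minimal invariant subspace whenever a strictly simplicial representative is needed so that the earlier lemmas apply verbatim. Granting this, Lemma~\ref{tower} applied $l-1$ times gives that $\Delta_0$ is a root subsystem of $\Delta_l$; by the reduction in Section~\ref{reduction} the Kac-Moody algebra $\g$ with root system $\Delta_l$ has a regular subalgebra $\h$ with root system $\Delta_0$, and $W(\Delta_\g)\r-equiv G$, $W(\Delta_\h)\r-equiv H$, which is the assertion.

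The construction of $\Delta_i$ from $\Delta_{i+1}$ splits by the type of the $i$-th step. If $\rank K_i<\rank K_{i+1}$, then $K_i$ is a proper-rank maximal simplicial subgroup of $K_{i+1}$ and Theorem~\ref{subalg for max subgr} applies directly to the root system $\Delta_{i+1}$ already in hand, producing $\Delta_i=\{\alpha\in\Delta_{i+1}\ |\ r_\alpha\in K_i\}$ with the required properties. What I would emphasize is that such a step puts \emph{no} restriction on $\Delta_{i+1}$: it works for \emph{every} root system realizing $K_{i+1}$. If instead $\rank K_i=\rank K_{i+1}$, the embedding has maximal rank, and here I would invoke Theorem~1 and Lemma~1 of~\cite{T}, which supply at least one pair of root systems realizing this maximal-rank embedding as a root subsystem relation.

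The main obstacle is exactly the coordination of the choices at the maximal-rank steps: \cite{T} yields \emph{some} compatible pair, not necessarily one compatible with a root system of $K_{i+1}$ that has already been pinned down higher up the chain. To get around this I would use that along any chain from Tables~\ref{3}--\ref{6-5} the ranks $\rank K_0\le\rank K_1\le\dots\le\rank K_l$ are non-decreasing, so the maximal-rank steps occur in consecutive blocks, each block preceded (reading downwards) by a proper-rank step; since a proper-rank step absorbs an arbitrary root system above it, it suffices to realize each maximal-rank block consistently on its own, which is what the classification of maximal-rank subgroup relations in~\cite{JKRT} together with~\cite{T} is meant to provide. Processing the chain from the top, each completed block hands its chosen root system down to the next proper-rank step, which accepts it without constraint. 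Checking that the blocks really do compose in this way --- tracing~\cite{JKRT} and verifying that the realizations of~\cite{T} fit together inside a block --- is, I expect, the technically delicate point; the remaining verifications (realizing the abstract nested root systems inside one common root lattice, and checking the {\rr}-isomorphism rather than mere abstract isomorphism at each stage) are routine once one passes to minimal invariant subspaces.
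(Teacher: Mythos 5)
Your overall strategy --- refine $H\subset G$ to a chain of maximal embeddings from Tables~\ref{3}--\ref{6-5}, handle proper-rank steps by Theorem~\ref{subalg for max subgr}, handle equal-rank steps by \cite{T}, and glue with Lemma~\ref{tower} --- is the same as the paper's, and you correctly isolate the one real difficulty: each equal-rank step demands a specific choice of root lengths for its ambient group, and these choices may clash. But your resolution of that difficulty fails, and it is not the one the paper uses. You argue that the equal-rank steps group into blocks separated by proper-rank steps and that ``it suffices to realize each maximal-rank block consistently on its own'' because a proper-rank step ``absorbs an arbitrary root system above it.'' The direction of determination defeats this: a proper-rank maximal step $K_i\subset K_{i+1}$ indeed accepts any root system for $K_{i+1}$, but it then \emph{outputs a determined} root system for $K_i$, namely $\{\alpha\in\Delta_{K_{i+1}}\mid r_\alpha\in K_i\}$, with lengths inherited from above. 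So, processing the chain from the top, the root system arriving at the top of the second (and each subsequent) equal-rank block is already forced, whereas \cite{T} only guarantees that a maximal-rank inclusion can be realized for \emph{some} freely chosen root system of its larger group. Nothing ensures the forced one works; this is exactly the clash displayed in Example~\ref{ex_tower}, where two maximal-rank steps in one tower require contradictory length assignments. The delicate point is between blocks, not inside them, so realizing each block ``on its own'' does not suffice.

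The paper closes this gap with a fact you neither state nor prove: using Tables~\ref{3}--\ref{6-5} one checks that for every $H\subset G$ there exist $H'\r-equiv H$ and a tower $H'=K_0\subset\dots\subset K_l=G$ in which \emph{at most one} step satisfies $\rank K_i=\rank K_{i+1}$, all remaining steps being proper-rank maximal embeddings that require no choice. That single equal-rank step need not be a maximal embedding --- the equal-rank result of \cite{T} does not require maximality, which is what allows an entire block of equal-rank maximal steps to be collapsed into one step. With only one step demanding a choice of lengths there is nothing left to coordinate. Note also that this rearrangement may replace the given embedding of $H$ by a different embedding of an $r$-isomorphic copy; the theorem's statement permits this, and the paper's argument genuinely needs that freedom, whereas your proposal keeps the original chain fixed. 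Without the ``at most one equal-rank step'' reduction (a case check against the tables), the proof is incomplete.
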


\begin{proof}
If $\rank H=\rank G$ the theorem is shown in~\cite{T}.
If $\rank H<\rank G$ and $H\subset G$ is maximal, then the theorem coincides with 
Theorem~\ref{subalg for max subgr}.
%
%Fisrt, we prove the theorem of maximal subgroups  $H\subset G$ with  $rk H\ne rk G$. 
%If there exists a visual embedding of $H$ to $G$, then the theorem follows from~Prop.\ref{visual}.
%
%For all the other cases (namely, for non-visual embeddings listed in Table~\ref{non-visual}) it is sufficient to check that
%the fundamental chamber of $H$ has no dihedral angles decomposed by the walls of $G$ (see Lemma...).
%This condition holds {\it a-priory} for all simply-laced groups $G$. For non-simplilaced groups we need to check 
%only dihedral angles at least one of whose sides is a wall of the new reflection $r_{u_*}$ or  $r_{v_*}$. 
%Moreover, if $G$ contains no angle $\pi/6$ then it is sufficient to to check the right angles only (and if $G$ contains 
%$\pi/6$ , we need to check all angles of size $\pi2$ and $\pi/3$).
% 
%It is easy to check that all these conditions are satisfied only by the subgroup number 14 in Table~\ref{non-visual},
%namely the right angle formed by the walls of $r_{v_*}$ and $r_{u_*}$ in principle could be decomposed by a wall $\alpha$ of $G$. In that case, the vector of length 2 orthogonal to $\alpha$ will be given by 
%$$u_*-v_*=(2v_1+v_2+4v_3+\sqrt{2}(v_0+2v_4+v_5))-(v_1+v_2+v_3)=v_1+3v_3+\sqrt{2}(v_0+2v_4+v_5).$$
%Fathermore, if $\alpha$ is a wall of $G$ then $g\alpha$ is a wall of $G$ for any $g\in G$   
%However, $$r_or_5r_4r_3(u_*-v_*)=v_1+v_3$$ is not a vector othogonal to any wall of $G$.
%So, $\alpha$ is not a wall of $G$ either, and the theorem is established for all maximal subgroups.
%
%\medskip

To prove the theorem for non-maximal subgroups,
we are going to use Lemma~\ref{tower}, i.e. to show that for each subgroup $H\subset G$ one can choose a subgroup 
$H'\r-equiv H$ and a tower of embeddings
$$H'=K_0\subset K_1\subset K_2 \subset \dots \subset K_l=G$$ 
such that each single embedding $K_i\subset K_{i+1}$ corresponds to a subalgebra, where the algebra $\g_i$ with $W(\Delta_{\g_i})=K_i$
is the same for both embeddings $K_{i-1}\subset K_i$ and $K_i\subset K_{i+1}$.

Recall that in a non-simply-laced case a group $G$ can serve as a Weyl group for several root systems
(and an algebra is specified by the choice of the short and long vectors among the simple roots).
Notice that in case of a maximal subgroup satisfying  $\rank H< \rank G$, Theorem~\ref{subalg for max subgr} shows that
$\h\subset \g$ is a subalgebra for any algebra $\g$ having $G$ as a Weyl group of $\Delta_{\g}$
(independently of the choice of the short and long roots).

On the other hand, for the subgroups satisfying $\rank H= \rank G$ it is  shown in~\cite{T}
that it is possible to choose the algebra with Weyl group $G$ (i.e. to choose the set of long and the set of short simple roots in $\Delta$) such that the algebra $\h$ determined by the root system $\Delta_H$ turns into a subalgebra of $\g$
(and the other choices can easily lead to root systems which are not root subsystems).
We emphasize that maximality is not needed here.

So, we need to check that all the choices of lengths (for all $K_i\subset K_{i+1}$, $i=0,\dots,l-1$) agree. 
Using Tables~\ref{3}--\ref{6-5}
it is easy to check that for each subgroup $H\subset G$ one can find $H'\r-equiv H$ and a tower 
$$H'=H_0\subset K_1 \subset K_2 \subset...\subset K_l=G,$$ where each of $K_i\subset K_{i+1}$ but at most one 
are maximal subgroups satisfying $\rank K_i< \rank K_{i+1}$ 
(and the remaining embedding, if any, satisfies  $\rank K_i= \rank K_{i+1}$ but may not be maximal). 
Hence, for all steps but one we do not need to make any choices, and the choice made for the one step left 
leads to a subalgebra $\h_i\subset \h_{i+1}$ for each $i=0,\dots,l-1$. Now, applying  Lemma~\ref{tower}, we get the statement of the
 theorem.
 
\end{proof}

\begin{example}
Consider a subgroup $H\subset G$ shown on Fig~\ref{ex_tower}. A group $H$ can be included in the tower 
$$H=K_0\subset K_1\subset K_2 \subset K_3 =G,$$ where each of the embeddings $K_0\subset K_1$ and $K_2\subset K_3$
are of maximal rank. To obtain the corresponding subalgebra $\h\subset \g$, 
for each of these two embeddings we need to make 
an appropriate choice of the lengths of the roots, and the choices may contradict each other.
On the other hand, we may consider another tower $$H'=K_0'\subset K_1'\subset K_2'=G,$$
(where $H'\r-equiv H$, but the embedding $H'\subset G$ may differ from $H\subset G$).
The latter tower can be turned into the tower of subalgebras since for this tower we need to make the choice 
for the embedding $K_0'\subset K_1'$ only.

\end{example}

\begin{figure}[!h]
\begin{center}
\psfrag{a}{(a)}
\psfrag{b}{(b)}
\psfrag{H0}{\small $K_0=H$}
\psfrag{H1'}{\small $K_0\r-equiv H \r-equiv K_0'$}
\psfrag{H2}{\small $K_1$}
\psfrag{H3}{\small $K_2$}
\psfrag{H4}{}
\psfrag{1}{}
\psfrag{H3'}{\small $K_3=G=K_2'$}
\psfrag{H0'}{\small $K_0'=H'$}
\psfrag{H1}{}
\psfrag{H2'}{\small $K_1'$}
\epsfig{file=./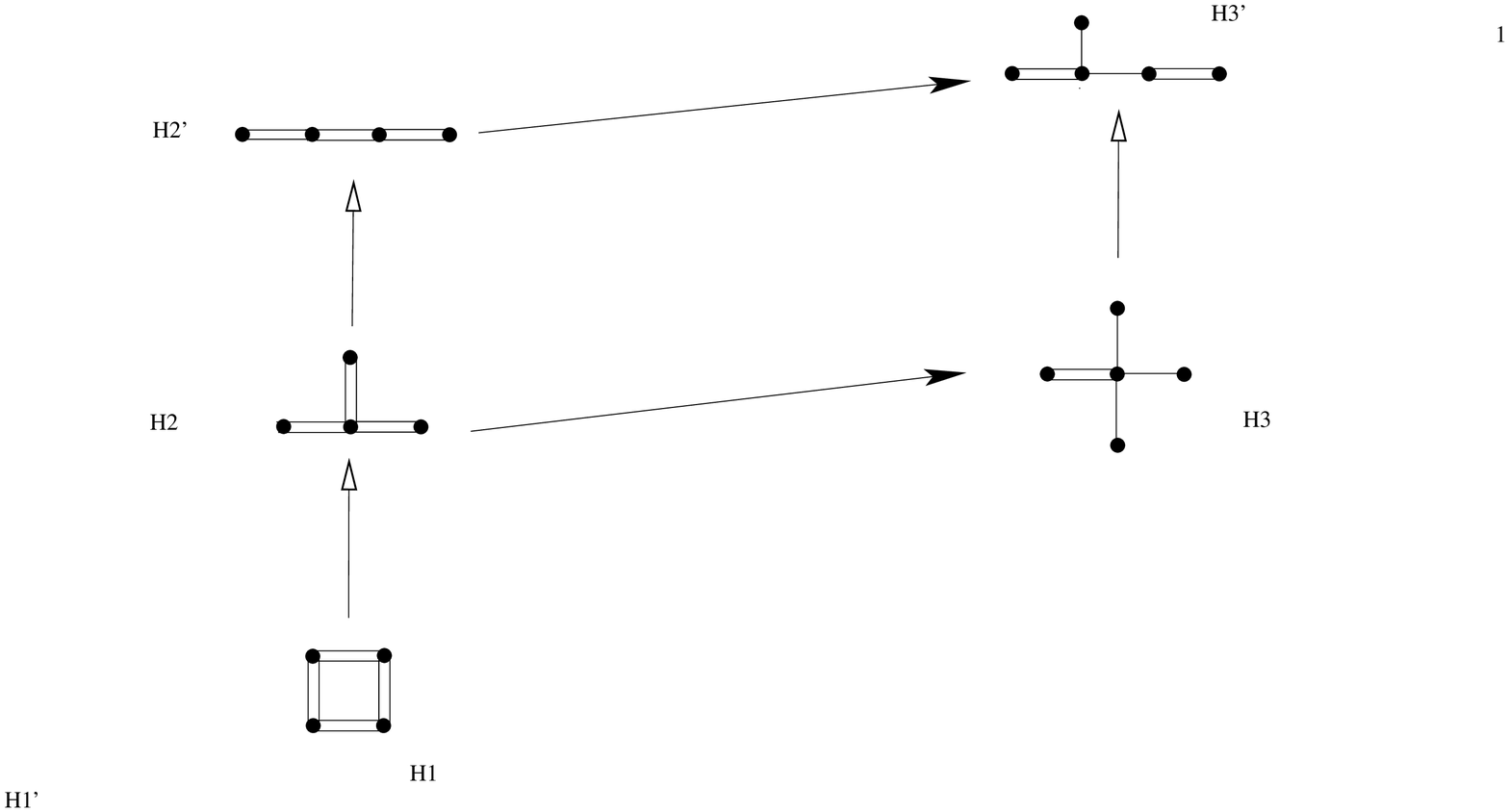,width=0.99\linewidth}
\caption{Two embeddings of the same group.}
\label{ex_tower}
\end{center}
\end{figure}

\begin{remark}
In fact, the algorithm provided in Section~\ref{algorithm}  gives way to classify all regular hyperbolic subalgebras
of hyperbolic Kac-Moody algebras. In view of Remark~\ref{class}, for each pair $(H,G)$ we can list all the embeddings $H\subset G$ 
(if any) up to inner isomorphisms of $G$. Given all embeddings $H\subset G$, construction of all subalgebras $\h\subset \g$ with 
$W(\Delta_{\h})=H$ and $W(\Delta_{\g})=G$ is immediate.

\end{remark}

In view of Theorem~\ref{subalg for each subgr} it is natural to ask the following question.

\begin{question}
Let $\g$ be an indefinite (but not hyperbolic) Kac-Moody algebra with root system $\Delta_{\g}$ and Weyl group $G=W(\Delta_{\g})$.
Given a reflection subgroup $H\subset G$, does there always exist a regular subalgebra $\h$ such that $H=W(\Delta_{\h})$? 

\end{question}

\end{document}